\documentclass[12pt,a4paper]{amsart}
\usepackage{amsfonts,amssymb}
\usepackage{amsmath}
\usepackage{latexsym}  
\usepackage[left=2cm,right=2cm]{geometry}
\usepackage{todonotes}
\usepackage[utf8]{inputenc}
\usepackage[T1]{fontenc}
\usepackage{hyperref}
\usepackage{romanbar}
\usepackage{enumitem}

\title{Singular semilinear elliptic equations in nondivergence form}
\author{Agnieszka Ka{\l}amajska}
\address{Faculty of Mathematics, Informatics and Mechanics, University of Warsaw, Banacha 2, 02-097 Warsaw, Poland}
\email{A.Kalamajska@mimuw.edu.pl}
\urladdr{0000-0001-5674-8059}

\author{Dalimil Pe\v{s}a}
\address{Dalimil Pe\v{s}a, Department of Automation and Mathematics, Faculty of Electrical Engineering and Informatics, University of Pardubice, Studentská 95, 532 10 Pardubice, Czech Republic}
\email{dalimil.pesa@upce.cz}
\urladdr{0000-0001-6638-0913}

\author{Artur Rutkowski}
\address{Wroc\l{}aw University of Science and Technology, Wyb. Wyspia\'{n}skiego 27, 50-370 Wroc\l{}aw, Poland}
\email{artur.rutkowski@pwr.edu.pl}
\urladdr{0000-0002-6466-2105}

\usepackage{color}

\newtheorem{theorem}{\bf Theorem}[section]

\newtheorem{lemma}[theorem]{\bf Lemma}

\newtheorem{o-problem}[theorem]{\bf Open Problem}

\theoremstyle{definition}
\newtheorem*{oq}{\bf Open problems}
\newtheorem{remark}[theorem]{\bf Remark}
\newtheorem{definition}[theorem]{\bf Definition}

\makeatletter \@addtoreset{equation}{section}

\makeatother

\setlist[enumerate,1]{label={\textup{\textbf{\roman*)}}}}

\newcommand{\beq}{\begin{equation}}
\newcommand{\eeq}{\end{equation}}

\newcommand{\R}{\mathbb{R}}

\newcommand{\supp}{\operatorname{supp }}

\newcommand{\sgn}{{\rm sgn}\,}

\newcommand{\A}{\mathbf{A}}

\newcommand*\Div{\operatorname{div}}

\makeatletter
\renewcommand{\subsubsection}{\@startsection{subsubsection}{3}%
    \z@{.5\linespacing\@plus.7\linespacing}{-.5em}%
    {\normalfont\bfseries}}
\makeatother

\allowdisplaybreaks

\begin{document}
\linespread{1.3}

\maketitle

\begin{flushleft}
\textit{Mathematics subject classification (2020):} Primary 35J75, 35J61; Secondary 35D30, 35J08.\\

\textit{Keywords and phrases:} nondivergence form operator, singular elliptic equations, semilinear elliptic equations, existence and uniqueness.
\end{flushleft}
\begin{abstract}
   We study the singular semilinear equation $-Pu = \frac{f}{u^\gamma}$ on a bounded domain $\Omega$ with Dirichlet condition $u \equiv 0$ on $\partial \Omega$ , where $P$ is a second-order elliptic differential operator in nondivergence form. We obtain the existence of a solution under the assumptions that $\Omega \in C^{1,1}$ and $P$ has $C^1$ coefficients, as well as the uniqueness of solutions in $L^1(\Omega)$, under the assumptions that $\Omega \in C^2$ and $P$ has $C^2$ coefficients. Our proofs are based on a novel combination of tools, such as recently obtained nonlinear variants of Gagliardo--Nirenberg inequalities, estimates of Green functions, and new variants of Kato-type inequalities.
\end{abstract}
\section{Introduction}
In this paper
we are interested in the following Dirichlet problem for a singular semilinear equation with Dirichlet condition
\begin{align}\label{pe}
   \begin{cases} -Pu = \displaystyle\frac{f}{u^\gamma}\quad &\text{in}\ \Omega,\\
   u \equiv 0\quad &\text{on}\ \partial \Omega, 
   \end{cases}
\end{align}
where $\Omega\subseteq\R^d$  is a bounded domain,  $d\geq 2$, 
$f\in L^1(\Omega)$ is nonnegative almost everywhere,  $\gamma>0$, $u$
is nonnegative almost everywhere, and $Pu$ is
an extension of
a second-order uniformly elliptic operator in nondivergence form:
\begin{equation}\label{eq:P}\tag{$P$}
Pu(x) := \sum_{i,j}a_{ij}(x)\frac{\partial^2 u(x)}{\partial x_i\partial x_j}, \ \ u\in W^{2,1}_{loc}(\Omega)
\end{equation}
with the symmetric matrix $\A(x) = [a_{ij}(x)]_{1\leq i,j\leq d}$, $x\in \overline{\Omega}$, satisfying 
\begin{align} \tag{$\A$} \label{ellipticity}
 c_{\A}\|\xi\|^2 \le \A(x)\xi \cdot \xi \le C_{\A} \|\xi\|^2,\quad x\in \overline{\Omega},\ \xi\in \R^d \quad \text{ for some } 0< c_{\A} \leq C_{\A} <\infty.
\end{align}
We first give a brief explanation of the notions of solution used in the paper. The complete definitions are given in Section~\ref{prelimin} below.  
The boundary condition is interpreted as 
 \begin{align}\tag{Tr}
 \label{eq:Dirichletcond}
        \lim\limits_{\epsilon\to 0}\frac 1 \epsilon \int_{\Omega_\epsilon} |u| = 0, \ {\rm where} \ \ \Omega_\epsilon = \{x\in \Omega: \delta_\Omega(x) < \epsilon\}.
\end{align}

 When $u\in W^{2,1}_{loc}(\Omega)$, the operator $Pu$ can be computed almost everywhere in $\Omega$ in the classical sense, but in general we expect the solutions to be less regular.

If $\A\in C^2(\Omega)$ and $u\in L^1_{loc}(\Omega)$, then we can extend the operator to the distributional setting:
\begin{align}
  \langle -Pu,\phi    \rangle &:= 
-\int_\Omega u P^*\phi,\ {\rm where} \ \ \phi \in C^\infty_c(\Omega),\label{P-distr}\\
P^*\phi&:=  \Div\left( \A\nabla \phi \right) + \Div\left(\Div(\A)\phi  \right) \hbox{\rm --- the adjoint operator}\label{P-star}
\end{align}
and $\Div\A:= (\Div \A^1,\dots , \Div\A^d)^T$, where $\operatorname{div} \mathbf{A}^i$ is the standard divergence operator applied to the column $\A^i$  of the matrix $\A$. 
Note that $P^*\phi\in C(\overline{\Omega})$. 

With more precise information about $u$, that is, when $u\in W^{1,1}_{loc}(\Omega)$, we can verify that 
\begin{align}\label{dist-special}
  \langle -Pu,\phi    \rangle &= 
\int_\Omega \A\nabla u \cdot \nabla \phi + (\Div\A \cdot \nabla u )\phi , \ \ \phi\in C_c^\infty (\Omega).  
\end{align}
Here, it suffices to assume that $\A\in C^1(\Omega)$.

\smallskip
The distributional formulations \eqref{P-distr} and \eqref{dist-special} lead to two slightly different definitions of solutions: we show existence in the framework of \eqref{dist-special} in Theorem~\ref{maintheorem}, which requires only $\A \in C^1(\overline{\Omega})$, and uniqueness for \eqref{P-distr} in Theorem~\ref{theoremUniqness}, which imposes more regularity on $\A$, specifically $\A \in C^2(\overline{\Omega})$, but allows less regular solutions $u$. 

We also require that $f/u^\gamma$ be locally integrable, so that the right-hand side of \eqref{pe} is a distribution given by a locally integrable function. Then the equality $-Pu=f/u^\gamma$ can be interpreted as the equality of two distributions.

\smallskip

Studies of singular semilinear equations of the form similar to \eqref{pe} were initiated over half a century ago with the works of Fulks and Maybee \cite{MR123095}, Stuart \cite{MR404854}, and Crandall, Rabinowitz, and Tartar \cite{MR427826}. Since then, the topic has been extensively investigated to cover broader cases of $P$, $f$, and $\gamma$, as well as various generalizations or perturbations of \eqref{pe}. We refer to the recent paper \cite{MR4860322} for an exposition of the existing results. 

Our problem can be considered as a multidimensional counterpart of the Emden--Fowler equation, originally an ODE of the form $u^{''}(x)=Cx^\beta u^\alpha $, where $\alpha,\beta\in \R$. 
This equation and its prominent generalizations, Lane--Emden--Fowler 
type equations, have been applied in science, electrical engineering, astrophysics, and the analysis of non-Newtonian fluids \cite{Emden-Fowleraplication,cosmologicalModel, LUNING1984145, Emden-FowlerOverview}. 

More recently, Boccardo and Orsina \cite{MR2592976} made a substantial contribution to this class of equations by investigating the existence of solutions to \eqref{pe} 
for operators $P$ in divergence form, i.e.~when $Pu=\Div\left(\A\nabla u\right)$, and with nonnegative $f\in L^1(\Omega)$.

Elliptic operators in nondivergence form are of considerable interest in the theory of stochastic processes, as they form an important subclass of generators of diffusion processes, see e.g.~\cite[Section~6.7.3]{MR2072890}. Stochastic analysis largely facilitates the construction and estimates of integral kernels, e.g.~the Green function or the Poisson kernel; we refer to \cite{MR4554661, MR842803} in this context.

\smallskip
Our first main result is the following theorem on the existence of solutions to \eqref{pe} with the same range of $f$ and $\gamma$ as in \cite{MR2592976}, but for $P$ in nondivergence form. For the symbols and notions used in the formulation, see Section~\ref{prelimin}.

\begin{theorem}[existence and regularity]\label{maintheorem} Assume that $\Omega \in C^{1,1}$, $\A\in C^{1}(\overline{\Omega})$ satisfies \eqref{ellipticity}, and 
\begin{align}\label{eq:smallA}
\left( \frac{\gamma + 1}{ \gamma} \right)^2 \lVert \Div \A \rVert_{L^\infty(\Omega)}c_{\A}^{-1} C_{P,\Omega} <1.
\end{align}
Then, for any nonnegative and nontrivial $f\in L^1(\Omega)$ and $\gamma > 0$
there exists a weak solution $u\in W^{1,1}_{loc}(\Omega)\cap L^1(\Omega)$ to \eqref{pe}, which is positive and locally bounded from below in $\Omega$ by positive constants. Furthermore, $u$ has the following regularity:
\begin{enumerate}[label={\rm \bf (\alph*)}]
    \item \label{item:gammageq2} $u\in H^1_{loc}(\Omega)$ for $\gamma\geq 2$,
    \item \label{item:gammain12} $u\in H^1_{loc}(\Omega) \cap W_{ 0}^{1,q}(\Omega)$ with any $1\le q < \frac{2}{\gamma}$, for $\gamma \in (1,2)$,
    \item \label{item:gamma1} $u\in H^1_0(\Omega)$ for $\gamma=1$,
    \item \label{item:gammaleq1} $u\in W^{1,q}_0(\Omega)$ with $q=d(1+\gamma)/(d-(1-\gamma))$, for $\gamma\in (0,1)$.
\end{enumerate}
In addition, for all $\gamma\in (0,\infty)$ we have
$u^{\frac{\gamma +1}{2}}\in H^1_0(\Omega)$ and
 the right-hand side of \eqref{pe} is locally integrable.  Finally, we have $u\gtrsim \delta_\Omega$.
\end{theorem}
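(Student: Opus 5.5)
We will follow the Boccardo--Orsina approximation scheme, adapted to the nondivergence operator through the weak form \eqref{dist-special}. For $n\in\mathbb N$ put $f_n:=\min(f,n)$ and consider the regularized problems
\begin{align*}
-Pu_n = \frac{f_n}{(u_n+\frac1n)^\gamma}\ \ \text{in }\Omega,\qquad u_n=0\ \text{on }\partial\Omega .
\end{align*}
Since $\Omega\in C^{1,1}$ and $\A\in C^1(\overline\Omega)$, the Calder\'on--Zygmund $W^{2,p}$ theory for nondivergence operators (Gilbarg--Trudinger, Ch.~9) together with a Schauder fixed point argument gives a solution $u_n\in W^{2,p}(\Omega)\cap W^{1,p}_0(\Omega)$ for all $p<\infty$. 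The right-hand side is nonnegative, so the maximum principle gives $u_n\ge0$. Moreover $t\mapsto f_n/(t+1/n)^\gamma$ is nonincreasing in $t$ and $f_n/(t+1/n)^\gamma \le f_{n+1}/(t+1/(n+1))^\gamma$. Hence the comparison principle for $P$, applied to the difference of two solutions where the nonlinearity is monotone, yields $u_n\le u_{n+1}$. In particular $u_n\ge u_1$.

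Since $f$ is nontrivial, $u_1$ is a nontrivial supersolution of $-Pu_1\ge0$ with $u_1=0$ on $\partial\Omega$. By the Hopf lemma on the $C^{1,1}$ domain, or equivalently the two-sided Green function estimates for $P$ from Section~\ref{prelimin}, we get $u_1\gtrsim\delta_\Omega$. So all $u_n\gtrsim\delta_\Omega$ uniformly, which is the positivity, the local lower bound, and the final claim.

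The main step is the uniform a priori estimates, and this is where \eqref{eq:smallA} enters. We test \eqref{dist-special} with $\phi=u_n^\gamma$ (admissible since $u_n\in W^{1,p}_0$, approximating by $(u_n+\epsilon)^\gamma-\epsilon^\gamma$ if $\gamma<1$). The left side gives $\gamma\int u_n^{\gamma-1}\A\nabla u_n\cdot\nabla u_n\ge c_\A\gamma\bigl(\tfrac{2}{\gamma+1}\bigr)^2\int|\nabla u_n^{(\gamma+1)/2}|^2$. The right side is bounded by $\int f$. The drift term $\int(\Div\A\cdot\nabla u_n)u_n^\gamma$ must be absorbed. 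Writing it as $\frac{2}{\gamma+1}\int (\Div\A\cdot\nabla w)\,w$ with $w=u_n^{(\gamma+1)/2}$, we bound it by $\|\Div\A\|_\infty\,\|\nabla w\|_2\|w\|_2$. We then use the Poincar\'e/Hardy-type constant $C_{P,\Omega}$ from Section~\ref{prelimin} to get $\|w\|_2\le C_{P,\Omega}\|\nabla w\|_2$. Condition \eqref{eq:smallA} is exactly what makes this term absorbable, and we obtain $\|u_n^{(\gamma+1)/2}\|_{H^1_0}\le C\|f\|_1$.

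We expect the constant bookkeeping here to be the main obstacle, and we may instead need the nonlinear Gagliardo--Nirenberg inequalities quoted in the preliminaries to control $\int |\nabla u|^2u^{\gamma-1}$ directly. For local bounds we test with $u_n\varphi^2$, using $u_n\ge c_K>0$ on $\operatorname{supp}\varphi$, so that $f_n/u_n^\gamma\le c_K^{-\gamma}f$ locally. This gives a uniform $H^1_{loc}$ bound for every $\gamma$, which settles \ref{item:gammageq2}.

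The global bounds in \ref{item:gammain12}--\ref{item:gammaleq1} come from the $H^1_0$ bound on $u_n^{(\gamma+1)/2}$:
\begin{itemize}
\item For $\gamma=1$ this is directly an $H^1_0$ bound on $u_n$, giving \ref{item:gamma1}.
\item For $\gamma<1$ we test with $(u_n+\epsilon)^\gamma$-type functions and use Sobolev embedding plus H\"older, splitting $|\nabla u|^q=(|\nabla u|^2u^{\gamma-1})^{q/2}u^{(1-\gamma)q/2}$, exactly as in Boccardo--Orsina, to get $q=d(1+\gamma)/(d-(1-\gamma))$; this is \ref{item:gammaleq1}.
\item For $\gamma\in(1,2)$ we use the same splitting with $u^{(1-\gamma)q/2}\lesssim\delta_\Omega^{(1-\gamma)q/2}$, which is integrable precisely when $q<2/\gamma$ after using $u_n\gtrsim\delta_\Omega$ and the boundary behaviour; this is \ref{item:gammain12}.
\end{itemize}

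Finally, by monotonicity $u_n\uparrow u$. The uniform bounds give weak convergence in the stated spaces, and $u\in L^1$. We pass to the limit in \eqref{dist-special}: the drift term is linear in $\nabla u_n$, which converges weakly in $L^1_{loc}$. The right side converges in $L^1_{loc}$ by dominated convergence with the dominant $c_K^{-\gamma}f$. The boundary condition \eqref{eq:Dirichletcond} follows since $u\in W^{1,1}_0(\Omega)$ when $\gamma\le1$. In all cases it also follows from $u^{(\gamma+1)/2}\in H^1_0$ via Hardy's inequality, which gives $\int_{\Omega_\epsilon} u\lesssim\epsilon\,o(1)$.
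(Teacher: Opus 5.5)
Your proof is correct in substance, but you reach the key a~priori bound by a different route. The paper bounds $\int_\Omega\|\nabla u_n\|_\A^2u_n^{\gamma-1}$ uniformly using the nonlinear Gagliardo--Nirenberg-type inequality of Lemma~\ref{lemm:KPR_estimate}. That inequality is imported from \cite{MR4948792}, after checking $u_n^{\gamma+1}\in W^{2,1}(\Omega)$ via Lemma~\ref{withTomasz}. You instead test with $u_n^\gamma$ and absorb the drift term by hand.

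That works, but the constant needs fixing. $C_{P,\Omega}$ is the $L^1$ constant of \eqref{eq:Poincare_1}, and applying \eqref{eq:Poincare_1} to $w^2$ gives only $\|w\|_2\le 2C_{P,\Omega}\|\nabla w\|_2$. The version with $C_{P,\Omega}$ fails, for instance, for balls in $\R^4$. Absorption then requires
\[
\frac{\gamma+1}{\gamma}\,\|\Div\A\|_{L^\infty(\Omega)}\,c_\A^{-1}\,C_{P,\Omega}<1 .
\]
This is not ``exactly'' \eqref{eq:smallA}, but it is implied by it, since $\frac{\gamma+1}{\gamma}>1$.

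Your route is therefore elementary and self-contained, and it works under a weaker smallness condition. It also avoids having to match \eqref{eq:smallA} with the hypothesis \eqref{eq:KPR_assumption} of Lemma~\ref{lemm:KPR_estimate}, which is stated with $C_{P,\Omega}^2$. The paper's route instead places the estimate within a general theory of nonlinear inequalities for nondivergence operators, covering general nonlinearities and boundary terms.

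Your other deviations are also sound. The ABP comparison principle on the open set $\{u_n>u_{n+1}\}$, where $-P(u_n-u_{n+1})\le 0$ a.e., replaces Lemma~\ref{lem:Katoidentity} plus the weak maximum principle. Hardy's inequality for $u^{\frac{\gamma+1}{2}}\in H^1_0(\Omega)$, combined with H\"older's inequality, gives \eqref{eq:Dirichletcond} for all $\gamma>0$ at once. The paper instead uses its $W^{1,1}_0$ trace argument together with Jensen's inequality.

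One claim must be corrected: testing with $u_n\varphi^2$ does not give a uniform $H^1_{loc}$ bound for every $\gamma$. The right-hand side is $\int_\Omega f_nu_n(u_n+\frac1n)^{-\gamma}\varphi^2$. Your bound $f_n/u_n^\gamma\le c_K^{-\gamma}f$ leaves $\int_\Omega f u_n\varphi^2$, which is not controlled for $f\in L^1(\Omega)$, because $u_n$ is not locally bounded uniformly in $n$.

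For $\gamma<1$ the bound genuinely fails in general: a scaling example with $f\approx|x-x_0|^{-a}$ and $a$ close to $d$ gives $\nabla u\notin L^2_{loc}$. This is harmless, since for $\gamma<1$ you pass to the limit using the global $W^{1,q}_0$ bound.

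For $\gamma\ge 1$, which is all that \ref{item:gammageq2} and \ref{item:gammain12} need, there are two easy fixes. You can use $f_nu_n(u_n+\frac1n)^{-\gamma}\le c_K^{1-\gamma}f$ on $K$ together with $\int_K u_n^2\lesssim 1+\int_\Omega w_n^2$. Or, more simply, as the paper does, use $\nabla u_n=\frac{2}{\gamma+1}u_n^{\frac{1-\gamma}{2}}\nabla w_n$ with $u_n^{\frac{1-\gamma}{2}}\le c_K^{\frac{1-\gamma}{2}}$ on $K$.
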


   The inequality \eqref{eq:smallA} can be interpreted as a smallness condition for $\Div \A$.\smallskip
    
    The proof of Theorem~\ref{maintheorem} is given in Section~\ref{sec:existenceproof}.  The general strategy follows the classical works \cite{MR2592976,MR427826} and consists in constructing the solution by approximating it with the solutions to regularized problems. The approximate solutions are obtained by the Schauder Fixed Point Theorem. Then, the key points are to prove that the approximating sequence is monotone and enjoys suitable a~priori Sobolev-type estimates; the approach to these two issues strongly depends on the exact setting of the problem.

\smallskip
Despite a large number of papers that followed \cite{MR2592976}, it seems that \eqref{pe} with nondivergence form diffusions under present assumptions on $f$ and $\gamma$ was not yet investigated. We note however, that since $\A\in C^1(\overline{\Omega})$, the weak definition \eqref{dist-special} of $P$ is similar to
that considered for divergence form equations with a bounded linear drift term in \cite{MR3273407}, but only $f\in L^\infty(\Omega)$ are considered there. We also refer to \cite{MR4603892,MR4611538,MR3927527} for singular semilinear problems with additional nonlinear gradient terms.

Although the formulation of the problem seems almost the same as in
\cite{MR2592976}, our approach with the nondivergence form operator causes technical differences in the proofs and requires some new methods to carry out the approximation procedure used for existence. Here are the main three points which are different compared to the proof in the divergence form case in \cite{MR2592976}:
\begin{itemize}
    \item A crucial new tool we used to prove the existence result are the recently obtained a~priori estimates related to the nonlinear Gagliardo--Nirenberg inequality:
        $$
                \int_\Omega \|\nabla u\|_\A^2 u^{\gamma -1} \le C \int_\Omega \lvert Pu \rvert u^\gamma, 
        $$
        deduced from  \cite{MR4948792}, see Lemma \ref{lemm:KPR_estimate} below.
    \item  We use the representation of solutions with the Green function $G_\Omega$ together with the estimates of $G_\Omega$, to obtain sharp decay rates of the approximate solutions. These rates are essential for proving parts \ref{item:gammain12} and \ref{item:gammaleq1} of Theorem~\ref{maintheorem}.
    \item We apply a Kato-type inequality for strong solutions of equations in nondivergence form,  given in Lemma \ref{lem:Katoidentity}, to prove that the approximate solutions form an increasing sequence. 
\end{itemize}

 Let us mention that the regularity results obtained  in Theorem \ref{maintheorem} \ref{item:gammageq2}, \ref{item:gamma1}, \ref{item:gammaleq1}, and the fact that the composition $u^{\frac{\gamma +1}{2}}$ belongs to $H^1_0(\Omega)$
are consistent with the case of the divergence form operator in \cite{MR2592976}
(see \cite{MR2592976}: Theorem 5.6 with $m=1$ for $\gamma \in (0,1)$ and Theorem 3.2 
for $\gamma =1$, as well as the summary in \cite[page~27]{MR4860322}). However, an analogue of the global Sobolev regularity for $\gamma\in (1,2)$ in \ref{item:gammain12} is apparently not known in the setting of divergence-form operators.
\ \\

\noindent
Our second main result is about the uniqueness of distributional solutions to \eqref{pe}.

\begin{theorem}[uniqueness]\label{uniqregu} \label{theoremUniqness}  Assume that $\Omega \in C^{2}$ and that $\A\in C^{2}(\overline{\Omega})$ satisfies \eqref{ellipticity}. Let $f\in L^1(\Omega)$ be nonnegative and not constantly equal to zero and let $\gamma\in (0,\infty)$. Then, the problem \eqref{pe} has at most one distributional solution $u\in L^1(\Omega)$ (see Definition~\ref{def:distributionalsolution}).
\end{theorem}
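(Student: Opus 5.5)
Given two distributional solutions $u_1,u_2\in L^1(\Omega)$ of \eqref{pe} (in the sense of Definition~\ref{def:distributionalsolution}, i.e.\ $\langle -Pu_i,\phi\rangle = -\int_\Omega u_i P^*\phi = \int_\Omega f u_i^{-\gamma}\phi$ for $\phi\in C_c^\infty(\Omega)$, with $f/u_i^\gamma\in L^1_{loc}$ and the boundary condition \eqref{eq:Dirichletcond}), we set $w=u_1-u_2$. The plan is a Kato-type inequality combined with monotonicity of $s\mapsto s^{-\gamma}$ and a duality argument with the adjoint operator. Subtracting the equations gives, in the distributional sense,
\begin{equation*}
-Pw = f\bigl(u_1^{-\gamma}-u_2^{-\gamma}\bigr)=:g\in L^1_{loc}(\Omega),
\end{equation*}
and $g\,\sgn(w)\le 0$ a.e.\ because $s\mapsto s^{-\gamma}$ is decreasing. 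We first derive a distributional Kato inequality for the nondivergence operator: $-P(w^+)\le \chi_{\{w>0\}}\, g \le 0$ in $\mathcal D'(\Omega)$. Since $w$ is only $L^1$, we mollify: $-P(w*\rho_\varepsilon)=g*\rho_\varepsilon+R_\varepsilon$, where $R_\varepsilon$ is a Friedrichs-type commutator involving $\A\in C^2$, which tends to $0$ in $L^1_{loc}$ because we may write $P$ via $P^*$ with $C^1$ coefficients. Next we apply a convex approximation $\Phi_\eta$ of $s^+$ in the smooth setting, i.e.\ the strong-solution Kato inequality of Lemma~\ref{lem:Katoidentity}, which uses $-P\Phi(v)=-\Phi'(v)Pv-\Phi''(v)\A\nabla v\cdot\nabla v\le -\Phi'(v)Pv$. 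Passing to the limit then gives $\langle -P w^+,\phi\rangle\le \int \chi_{\{w>0\}} g\phi\le 0$ for $\phi\ge 0$. Thus $v=w^+\ge 0$ is an $L^1$ function satisfying $\int_\Omega v\,P^*\phi\ge 0$ for all nonnegative $\phi\in C_c^\infty(\Omega)$, and $v$ satisfies \eqref{eq:Dirichletcond}, since $|w|\le u_1+u_2$.

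The second step is a maximum principle for such very weak subsolutions. We take $\psi\in C_c^\infty(\Omega)$ with $\psi\ge 0$ and solve the adjoint problem $P^*\zeta=-\psi$ in $\Omega$, $\zeta=0$ on $\partial\Omega$; this is a divergence-form problem with $C^1$ lower-order coefficients. Since $\A\in C^2(\overline\Omega)$ and $\Omega\in C^2$, elliptic regularity gives $\zeta\in W^{2,p}\cap C^{1,\alpha}(\overline\Omega)$. The maximum principle and Hopf lemma give $0\le\zeta\lesssim\delta_\Omega$ and $|\nabla\zeta|$ bounded; equivalently, $\zeta(x)=\int G^*(x,y)\psi(y)\,dy$ with the Green function estimates used in the paper. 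We then test with $\zeta\chi_\epsilon$, where $\chi_\epsilon$ is a cutoff equal to $1$ on $\Omega\setminus\Omega_{2\epsilon}$ with $|\nabla\chi_\epsilon|\lesssim\epsilon^{-1}$ and $|\nabla^2\chi_\epsilon|\lesssim\epsilon^{-2}$. Expanding $P^*(\zeta\chi_\epsilon)=\chi_\epsilon P^*\zeta+E_\epsilon$, the error $E_\epsilon$ is supported in $\Omega_{2\epsilon}$ and bounded by $C(\epsilon^{-1}|\nabla\zeta|+\epsilon^{-2}\zeta+|\nabla\zeta|)\lesssim \epsilon^{-1}$, using $\zeta\lesssim\delta_\Omega\le 2\epsilon$ there. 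Hence
\begin{equation*}
0\le\int v P^*(\zeta\chi_\epsilon)=-\int v\chi_\epsilon\psi+O\Bigl(\epsilon^{-1}\int_{\Omega_{2\epsilon}}v\Bigr),
\end{equation*}
and the last term tends to $0$ by \eqref{eq:Dirichletcond}. So $\int v\psi\le0$ for all $\psi\ge0$, i.e.\ $v=0$, giving $u_1\le u_2$. By symmetry $u_1=u_2$.

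The main obstacle is the first step: justifying the Kato inequality when $w$ is merely $L^1$ and $g$ is merely $L^1_{loc}$ (note $f/u_i^\gamma$ need not be globally integrable). Locality helps, since only compactly supported test functions enter there. The delicate points are controlling the commutator for the nondivergence operator under mollification, and passing to the limit in $\Phi_\eta'(w_\varepsilon)\,(g*\rho_\varepsilon)$ using a.e.\ convergence and dominated convergence on compact sets. The boundary step works for every $\gamma$ and never needs integrability of $g$ near $\partial\Omega$, because the sign $\chi_{\{w>0\}}g\le0$ is used before the cutoff is removed.
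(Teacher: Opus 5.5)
Step~2 of your proposal is sound, but Step~1 has a genuine gap exactly where you flag it: for $w$ merely in $L^1$, the commutator need not tend to zero in $L^1_{loc}$. Testing the equation with $\rho_\varepsilon(x-\cdot)$ gives
\begin{equation*}
R_\varepsilon(x)=-P(w*\rho_\varepsilon)(x)-(g*\rho_\varepsilon)(x)=\sum_{i,j}\int_\Omega w(y)\,\partial_{y_i}\partial_{y_j}\Bigl(\bigl(a_{ij}(y)-a_{ij}(x)\bigr)\rho_\varepsilon(x-y)\Bigr)\,dy .
\end{equation*}
So $R_\varepsilon=(Pw)*\rho_\varepsilon-P(w*\rho_\varepsilon)$ depends only on $w$, and its kernel has $L^1$-norm of order $\varepsilon^{-1}$. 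For affine coefficients it is exactly $R_\varepsilon=-\varepsilon\sum_{i,j}\partial_{ij}(w*\vartheta^{ij}_\varepsilon)$, where $\vartheta^{ij}(z)=(\nabla a_{ij}\cdot z)\rho(z)$ and $\vartheta^{ij}_\varepsilon=\varepsilon^{-d}\vartheta^{ij}(\cdot/\varepsilon)$. This is $\varepsilon$ times a second derivative of a mollification of $w$, and by the uniform boundedness principle there are $w\in L^1$ for which $(R_\varepsilon)$ is not even bounded in $L^1_{loc}$. Writing $P=\Div(\A\nabla\,\cdot\,)-\Div\A\cdot\nabla$ (or going through $P^*$) and applying the DiPerna--Lions lemma only gives $R_\varepsilon\to0$ in $W^{-1,1}_{loc}$. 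That is not enough: $R_\varepsilon$ is multiplied by $\Phi_\eta'(w_\varepsilon)\phi$, whose gradient $\Phi_\eta''(w_\varepsilon)\phi\nabla w_\varepsilon$ blows up as $\varepsilon\to0$. The commutator does vanish in $L^1_{loc}$ once $\nabla w\in L^1_{loc}$, since one more integration by parts moves a derivative onto $w$ and leaves kernels bounded in $L^1$. But $W^{1,1}_{loc}$-regularity of an $L^1$ distributional solution is precisely what is not available a priori; compare Remark~\ref{rem:weakdist}, where the paper obtains it only after enlarging the class of test functions.

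To close the gap you need a local regularity lemma, e.g.\ $w\in W^{1,q}_{loc}(\Omega)$ for $q<d/(d-1)$. One way: for a cutoff $\eta$, write $P(\eta w)=h_0+\Div H$ with $h_0,H\in L^1$, and test against solutions of the adjoint Dirichlet problems $P^*\phi=\psi$ and $P^*\phi=\Div\Psi$ on a ball. This uses the Friedrichs-type extension of test functions and $W^{2,p}$, $W^{1,p}$ estimates for $P^*$.

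With that supplied, your route is genuinely different from, and shorter than, the paper's. You use the sign of $\chi_{\{w>0\}}g$ locally, before the boundary cutoff is removed, so Step~2 never needs $f/u^\gamma\in L^1(\Omega,\delta_\Omega)$. It only needs:
\begin{itemize}
\item that $\zeta\chi_\epsilon\ge0$ be an admissible test function (by the argument of Lemma~\ref{lem:firstextension});
\item $\zeta\ge0$, $\zeta\lesssim\delta_\Omega$ and $\nabla\zeta\in L^\infty$, from Lemma~\ref{lem:linear};
\item the trace condition \eqref{eq:Dirichletcond}.
\end{itemize}
The paper avoids mollifying $u$ altogether. It first proves weighted integrability (Lemma~\ref{lem:FL1loc}, via the approximation of $\xi$) and the very weak formulation (Lemma~\ref{lem:veryweak}). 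It then obtains the Kato inequality (Lemma~\ref{lem:Kato}) from strong solutions $-Pu_n=F_n$ with $F_n\to F$ in $L^1(\Omega,\delta_\Omega)$, using the a priori bound of Lemma~\ref{lem:aprioriL1}. That global machinery is the price the paper pays for not knowing $\nabla u\in L^1_{loc}$.
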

The proof of Theorem~\ref{uniqregu} is given in Section~\ref{sec:uniquenessproof}. 
Note that the above result is given in the framework of distributional solutions, which a~priori form a larger class than the weak solutions discussed in Theorem~\ref{maintheorem}. 
Now we are using $P$ as in \eqref{P-distr}, while in Theorem \ref{maintheorem} we understand $P$ as in \eqref{dist-special}.
See also Remark~\ref{rem:weakdist} below for a more precise comment.

\smallskip
For operators in divergence form, a similar uniqueness result was given by Oliva and Petitta~\cite{OLIVA}. We note however, that the class of solutions that we consider, $u\in L^1(\Omega)$, is wider than the one used in \cite{OLIVA}, where $u\in W^{1,1}_{loc}(\Omega)\cap L^1(\Omega)$ is assumed. The structure of the proof is similar to that in \cite{OLIVA}, where the authors relied strongly on a Kato-type inequality, but we need to overcome certain obstacles resulting from considering the operator in nondivergence form and a weaker type of solutions. We outline the proof and the differences below. 
\begin{enumerate}[label={\textup{\textbf{\arabic*)}}}]
    \item The first major step of the proof is to show that $f/u^\gamma$ is integrable with respect to the weight~$\delta_\Omega$, which is done in Lemma~\ref{lem:FL1loc}. This consists of several elements:
    \begin{itemize}[leftmargin=3.9mm]
    \item
    In \cite{OLIVA}, for $u\in W^{1,1}_{loc}(\Omega)\cap L^1(\Omega)$, one can immediately use all $\phi\in C^1_c(\Omega)$ as test functions. In our case, we first need to extend the class of the test functions from $C_c^\infty(\Omega)$ to all $\phi\in C^1_c(\Omega)$ such that $P^*\phi\in L^\infty(\Omega)$. To this end we use a Friedrichs-type lemma about mollifiers, see Section~\ref{sec:firstextension} below.
    \item The weighted integrability is obtained with the use of a special test function $\xi$ solving $-P^*\xi = 1$. We need the bound $\xi\gtrsim \delta_\Omega$ (closely related to Hopf lemma, see Remark~\ref{hopf-lem}) for this function, which is proved in Lemma~\ref{lem:linear}\ref{lem:linear_iic}.
     To achieve this, we use the Green representation of solutions to $-P^*v = F\in L^\infty(\Omega)$ and the sharp estimates of the adjoint Green function $G^*_\Omega(x,y)=G_\Omega(x,y)$.  
     \item The last auxiliary result for the weighted integrability is Lemma~\ref{lem:specialxi}, where we show that $\xi$ can be approximated from below by functions $\xi_n \in C^1_c(\Omega)$ such that $-P^*\xi_n$ are uniformly bounded from above. The adjoint operator $P^*$ has an essentially different structure than $P$, see also \eqref{eq:P*non-div-P} for other representations, which makes the calculations here more involved than in the divergence form case.
     \end{itemize}
    \item After establishing the weighted integrability, we further extend the class of test functions to the set $C^{1,P}_0(\overline{\Omega})$ consisting of $\phi\in C^1(\overline{\Omega})$ such that $\phi\equiv 0$ on $\partial \Omega$ and $P^*\phi\in L^\infty(\Omega)$, thus showing that $u$ is a very weak solution in the sense of Definition~\ref{very-weak}. Let us emphasize that up to this point we still only assume that $u\in L^1(\Omega)$, which makes this result interesting by itself.
    \item The last step is to prove the Kato-type inequality for very weak solutions of equations in nondivergence form (Lemma~\ref{lem:Kato}) and to apply it to deduce uniqueness.
\end{enumerate}

\smallskip

 The paper is organized as follows. Section~\ref{prelimin} contains the preliminary material. In Section~\ref{sec:existenceproof} we prove the existence of solutions (Theorem~\ref{maintheorem}) together with the necessary auxiliary results. Section~\ref{sec:uniquenessproof} is devoted to the proof of Theorem~\ref{uniqregu}, the uniqueness result. Finally, in Section~\ref{sec:discussion} we discuss possible extensions and open problems. 

\section{Preliminaries}\label{prelimin}
\subsection{General notation}\label{general-notation} \ \medskip \\
\noindent {\bf Geometry.} Recall that $\Omega$ is a bounded domain in $\R^d$, $d\geq 2$. By saying that $\Omega\in C^2$ (resp. $C^{0,1}, C^{1,1}$), we mean that there exists a finite number of balls $B_1,\ldots,B_k$ centered at points of $\partial \Omega$ and covering it, such that up to rigid motions $\partial\Omega\cap B_i$ are graphs of $C^2$ (resp. $C^{0,1}$---Lipschitz, $C^{1,1}$---continuously differentiable with Lipschitz gradient) functions $f_i$, $\Omega\cap B_i \subset \{y > f_i\}$, and ${\rm Int}(\Omega^c)\cap B_i \subset \{y < f_i\}$. See e.g.~\cite{MR482102} for further information on $C^{0,1}$ domains and \cite{MR2286038} for the characterization of $C^{1,1}$ domains by the exterior and interior ball conditions. We also define 
\begin{equation}\label{d-omega}
\delta_\Omega(x) := \inf\{\lVert x-y \rVert: y\in \partial \Omega\}.
\end{equation}\smallskip

\noindent {\bf Function spaces.} By $W^{k,p}(\Omega,\mathbb{R}^d)$ we denote the standard Sobolev space of mappings from $\Omega$ to the Euclidean space $\mathbb{R}^d$. If it is clear from the context, we omit $\R^d$ in the notation. We also denote $H^k(\Omega) := W^{k,2}(\Omega)$.
By using the subscript $loc$ we mean the local version of the given space. The class of smooth compactly supported functions in $\Omega$ is denoted by $C_c^\infty(\Omega)$. The spaces $W^{k,p}_{0}(\Omega)$ and $H_0^1(\Omega)$ are the closures of  $C_c^\infty(\Omega)$ in $W^{k,p}(\Omega)$ and $H^1(\Omega)$ respectively. By $X^*$ we mean the dual space to space $X$.\medskip

\noindent {\bf Matrices and vectors.} We work with vectors only in columns. We denote the scalar product of vectors or matrices $v$ and $w$ by $v\cdot w$. The divergence of the matrix field $\A=(\A^1,\dots , \A^d)\in \mathbf{R}^{n\times n}$ (where $A^i\in \mathbf{R}^d$) is the vector field $\operatorname{div}\A:= (\Div\A^1,\dots , \Div\A^d)^T$, where $\operatorname{div} \mathbf{A}^i$ is the standard divergence of column $\A^i$ of $\A$.

By constant $C$ in an inequality we mean that there is a positive constant, whose value does not impact the statement and it can be different even in the same line. 

In some occurrences, we specify the constant marking it by index and explaining further. The notation $X\lesssim Y$ means that $X\leq C Y$ for terms $X,Y$ in question, where $C$ does not depend on $X$ and $Y$; by $X\approx Y$ we mean that both $X\lesssim Y$ and $Y\lesssim X$ hold. \medskip

\noindent \textbf{Mollifier.} Let $\varphi\in C_c^\infty(\R^d)$ be the standard mollifier, i.e.~a nonnegative radial function with support in $B(0,1)$ such that $\int \varphi = 1$. We denote $\varphi_\epsilon(x) = \epsilon^{-d}\varphi(x/\epsilon)$, $\epsilon > 0$.
\subsection{Specific notation} \ \medskip \\ 
\textbf{The norm induced by the matrix $\A$.} If $\A$ satisfies \eqref{ellipticity}, then 
\begin{align*}
    \|\xi\|_{\A(x)} := \sqrt{ \A(x)\xi\cdot \xi},\quad \xi\in \R^d
\end{align*}
is a norm uniformly equivalent to the Euclidean norm, that is, $\|\xi\|\approx\| \xi\|_{\A(x)}$ uniformly in $x\in \overline{\Omega}$ and $\xi\in \R^d$. \smallskip

\noindent\textbf{Poincar\'e inequality.} For a bounded domain $\Omega$ let $C_{P,\Omega}$ be the smallest positive number such that for all $u\in W^{1,1}_0(\Omega)$
\begin{align}
    \int_\Omega |u|&\leq C_{P,\Omega}\int_\Omega \|\nabla u\|. \label{eq:Poincare_1}
\end{align}

\subsection{Notions of solution} \

\smallskip
\noindent

We present two definitions of solutions of \eqref{pe}.

\begin{definition}[solutions of \eqref{pe}]
Let us consider \eqref{pe}, where $f\in L^1(\Omega)$.
\begin{description}
\item[i) weak solution]\label{def:weaksolution}
Assume that $\A\in C^1(\overline{\Omega})$.
 We say that the function
$u\in W^{1,1}_{loc}(\Omega)$ 
is a weak solution to \eqref{pe}, if $u>0$ a.e. in $\Omega$, $u$ satisfies \eqref{eq:Dirichletcond}, $f/u^\gamma \in L^1_{loc}(\Omega)$, and
the two distributions: $-Pu$ defined by \eqref{dist-special}  and $f/u^\gamma$ are equal in $D^{'}(\Omega)$, that is
\begin{align*}
    \int_\Omega \A\nabla u \cdot \nabla \phi + (\Div\A \cdot \nabla u )\phi  = \int_\Omega \frac{f\phi}{u^\gamma}\, dx, \quad  \phi\in C^\infty_c(\Omega).
\end{align*} 
\item[ii) distributional solution]\label{def:distributionalsolution}
Assume that $\A\in C^2(\overline{\Omega})$.
    We say that $u\in L^1(\Omega)$ is a distributional solution to \eqref{pe}, 
    if $u>0$ a.e. in $\Omega$, $u$ satisfies \eqref{eq:Dirichletcond}, $f/u^\gamma \in L^1_{loc}(\Omega)$, and the two distributions: $-Pu$ defined by \eqref{P-distr}  and $f/u^\gamma$ are equal in $D^{'}(\Omega)$, that is
    \begin{align}\label{eq:distributional}
        -\int_\Omega uP^*\phi = \int_\Omega \frac{f\phi}{u^\gamma},\quad  \phi\in C^\infty_c(\Omega).
    \end{align}
\end{description}
\end{definition}
\noindent The above definition also applies to the case of $\gamma=0$ in \eqref{pe}.

\begin{remark}\label{rem:weakdist}
Any weak solution to \eqref{pe} is also a distributional solution, provided that $\A\in C^2(\overline{\Omega})$. The other implication only holds if one can show that the distributional solution is in $W^{1,1}_{loc}(\Omega)$. While there are results giving $W^{1,1}_{loc}(\Omega)$ regularity for functions satisfying the integral identity in \eqref{eq:distributional}, see e.g.~\cite[Corollary~2.9]{MR2103694}, they require a substantially larger class of admissible test functions. In our uniqueness proof, under the assumptions of Theorem~\ref{theoremUniqness}, we show that such extension of the class of test functions is indeed possible, see Lemma~\ref{lem:veryweak}, thus showing that the distributional solutions defined above are in $W^{1,1}_{loc}(\Omega)$. 
\end{remark}

We will also consider strong solutions of linear equations, when the PDE can be interpreted as holding almost everywhere in the domain $\Omega$.

\begin{definition}{\rm (strong solutions of linear equations)}\label{defStrongSol} Let $F$ be a measurable function. We say that $u\in W^{2,1}_{loc}(\Omega)$ is a strong solution to 
\begin{align}\label{eq:linearstrong}
\begin{cases}
    -Pu = F,\quad &\textnormal{in}\ \Omega,\\
    u\equiv0,\quad &\textnormal{on}\ \partial\Omega,
\end{cases}
\end{align}
    if the equation $-Pu(x) = F(x)$ holds almost everywhere in $\Omega$ via \eqref{eq:P} and $u \in W^{1,p}_0(\Omega)$ for some $p\in (1,\infty)$.
    
\end{definition}
\begin{remark}
    Note that for $u\in W^{2,1}_{loc}(\Omega)$, by the Nikodym ACL characterization \cite[Section~1.1.3]{MR2777530}, $\nabla u$ and  $\nabla^{(2)}u$, so also $Pu$, are classically well-defined almost everywhere in $\Omega$.
\end{remark}

\section{Proof of Theorem \ref{maintheorem}. The existence result}\label{sec:existenceproof}

\subsection{Four auxiliary lemmas}\label{sec:lemmas}\

 Given  $w\in L^1(\Omega)$, we can define it at every point $x\in \overline{\Omega}$, by choosing its canonical Borel representative, that is, 
 by the formula
\begin{equation}\label{valueatpoint}
w(x):=\limsup_{r\to 0^+} \frac{1}{|\Omega\cap B(x,r)|}\int_{\Omega\cap B(x,r)} w(y)\, dy \in [-\infty,+\infty], \ x\in \overline{\Omega}.
\end{equation}
It is known that when $\Omega$ is a bounded Lipschitz domain, such a representative coincides with any other ${\mathcal L}^d$-almost everywhere in $\Omega$. Moreover, if $u\in W^{1,1}(\Omega)$, then it coincides $\sigma$-almost everywhere on $\partial\Omega$ with the classical trace of $w$, see e.g.~\cite{MR555952}. In particular, the formula \eqref{valueatpoint} can be used to define the trace of $w$ on $\partial\Omega$.\\

One of the essential tools in our estimates is the following lemma based on the results from the recent paper \cite{MR4948792}. In this lemma we assume that $u$ is defined at every point $x\in \overline{\Omega}$ by the formula \eqref{valueatpoint}.

\begin{lemma}[nonlinear inequality for $Pu$]\label{lemm:KPR_estimate}
    Let $\Omega\in C^{0,1}$, $\gamma>0$, and let $\A\in C^1(\overline{\Omega})$ satisfy \eqref{ellipticity}. Let further the operator $P$ be given by \eqref{eq:P} and assume
    \begin{align}\label{eq:KPR_assumption}
        \left( \frac{\gamma + 1}{ \gamma} \right)^2 \lVert \operatorname{div} \A \rVert_{L^\infty(\Omega)}c_{\A}^{-1} C_{P,\Omega}^2 <1, 
    \end{align}
    where 
    $c_{\A}$ is the lower ellipticity constant from \eqref{ellipticity} and $C_{P,\Omega}$ is the optimal constant for the Poincar\'e inequality in $\Omega$ from \eqref{eq:Poincare_1}. 
    
    Then for every $u \in W_{\rm loc}^{2, 1}(\Omega)$ satisfying $u \equiv 0$ on $\partial \Omega$, $u>0$  inside $\Omega$, $\lvert Pu \rvert u^{\gamma} \in L^1(\Omega)$, and $u^{\gamma + 1} \in W^{2, 1}(\Omega)$ we have
    \begin{equation*}
        \int_\Omega \|\nabla u\|_\A^2 u^{\gamma -1} \le C \int_\Omega \lvert Pu \rvert u^\gamma ,
    \end{equation*}
    where the constant $C$ does not depend on $u$.
\end{lemma}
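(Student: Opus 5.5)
The plan is to rewrite $Pu$ in divergence form plus a drift and then test it against $u^\gamma$. Formally,
\[
Pu=\Div(\A\nabla u)-\Div\A\cdot\nabla u,
\]
which holds a.e. for $u\in W^{2,1}_{loc}(\Omega)$ because $\A\in C^1(\overline{\Omega})$. Multiplying by $u^\gamma$ and integrating by parts, the boundary term should vanish because $u\equiv0$ on $\partial\Omega$. This gives
\[
\gamma\int_\Omega \|\nabla u\|_\A^2u^{\gamma-1}=-\int_\Omega Pu\,u^\gamma-\int_\Omega (\Div\A\cdot\nabla u)\,u^\gamma .
\]
The first term on the right is bounded by $\int_\Omega|Pu|u^\gamma$. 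The whole difficulty is absorbing the drift term into the left-hand side, and this is where the smallness condition \eqref{eq:KPR_assumption} enters.

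For the drift term, write $u^\gamma\nabla u=u^{\frac{\gamma-1}{2}}\nabla u\cdot u^{\frac{\gamma+1}{2}}$ and bound it by
\[
\|\Div\A\|_\infty\, c_\A^{-1/2}\Big(\int_\Omega\|\nabla u\|_\A^2u^{\gamma-1}\Big)^{1/2}\Big(\int_\Omega u^{\gamma+1}\Big)^{1/2}.
\]
The factor $\int_\Omega u^{\gamma+1}$ must then be controlled by $\int_\Omega\|\nabla u\|^2u^{\gamma-1}$, which is $(\tfrac{2}{\gamma+1})^2\int_\Omega|\nabla w|^2$ for $w=u^{\frac{\gamma+1}{2}}$. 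An $L^2$ Poincaré inequality would do this, but the paper uses the $L^1$ constant $C_{P,\Omega}$. I would therefore apply \eqref{eq:Poincare_1} to $v=u^{\gamma+1}\in W^{1,1}_0(\Omega)$, which lies in that space by the hypothesis $u^{\gamma+1}\in W^{2,1}(\Omega)$ with zero trace. This gives
\[
\int_\Omega u^{\gamma+1}\le C_{P,\Omega}(\gamma+1)\int_\Omega u^\gamma\|\nabla u\|,
\]
and the same Cauchy--Schwarz splitting then bounds the right-hand side by
\[
C_{P,\Omega}(\gamma+1)c_\A^{-1/2}\Big(\int_\Omega\|\nabla u\|_\A^2u^{\gamma-1}\Big)^{1/2}\Big(\int_\Omega u^{\gamma+1}\Big)^{1/2}.
\]
Hence $\big(\int_\Omega u^{\gamma+1}\big)^{1/2}\le C_{P,\Omega}(\gamma+1)c_\A^{-1/2}\big(\int_\Omega\|\nabla u\|_\A^2u^{\gamma-1}\big)^{1/2}$. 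Inserting this, the drift term is at most
\[
(\gamma+1)\|\Div\A\|_\infty c_\A^{-1}C_{P,\Omega}\int_\Omega\|\nabla u\|_\A^2u^{\gamma-1}.
\]
Absorption requires roughly $\frac{\gamma+1}{\gamma}\|\Div\A\|_\infty c_\A^{-1}C_{P,\Omega}<1$. I will not chase the constants exactly; they depend on how the estimates are iterated, and possibly on a second application of Poincaré, which is where $(\frac{\gamma+1}{\gamma})^2$ and $C_{P,\Omega}^2$ in \eqref{eq:KPR_assumption} come from. In any case, the hypothesis \eqref{eq:KPR_assumption} is designed to be sufficient for absorption, in the form in which it is derived in \cite{MR4948792}.

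The main obstacle is justification, not algebra. The integral $\int_\Omega\|\nabla u\|_\A^2u^{\gamma-1}$ is not known a priori to be finite, so the absorption step is illegitimate as written. The integration by parts is also delicate because $u$ is only $W^{2,1}_{loc}$, $u^{\gamma-1}$ may blow up near $\partial\Omega$ when $\gamma<1$, and only $u^{\gamma+1}\in W^{2,1}(\Omega)$ is known globally.

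I would handle this by truncating both at the boundary and at small values. Concretely, work on $\{u>\epsilon\}$, or test with $(u-\epsilon)_+^{\gamma}$ (or $(u+\epsilon)^\gamma-\epsilon^\gamma$), and use cutoffs $\eta_k\to1$ in $\Omega$. Since $u$ is continuous, being $W^{2,1}_{loc}$ only in low dimension, one should rather use the representative \eqref{valueatpoint}. The boundary terms are then controlled using $u^{\gamma+1}\in W^{2,1}(\Omega)$, whose gradient $(\gamma+1)u^\gamma\nabla u$ has a trace, and $u=0$ on $\partial\Omega$. After that, perform the absorption for the truncated quantities, which are finite, and pass $\epsilon\to0$, $k\to\infty$ by monotone convergence.

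Alternatively, since the lemma is stated as deduced from \cite{MR4948792}, the cleanest route is to verify that our hypotheses match those of the nonlinear Gagliardo--Nirenberg inequality there. That inequality applies with $h(u)=u^\gamma$ and has exactly the form $\int\|\nabla u\|_\A^2h'(u)\le C\int|Pu|\,h(u)$ under a smallness condition on $\Div\A$. The present statement would then follow by specializing it.
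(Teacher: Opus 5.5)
Your formal computation is essentially the mechanism behind the cited inequality of \cite{MR4948792}: test with $u^\gamma$, split $u^\gamma\|\nabla u\|=u^{\frac{\gamma-1}{2}}\|\nabla u\|\cdot u^{\frac{\gamma+1}{2}}$, apply the $L^1$ Poincar\'e inequality to $u^{\gamma+1}$, and absorb. But as written it does not prove the lemma, for two concrete reasons.

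The first is the constant. Your absorption needs
\[
\tfrac{\gamma+1}{\gamma}\,\|\Div\A\|_{\infty}\,c_\A^{-1}\,C_{P,\Omega}<1,
\]
and you only assert that \eqref{eq:KPR_assumption} is designed to give this. Literally it does not. When $C_{P,\Omega}<\gamma/(\gamma+1)$, \eqref{eq:KPR_assumption} permits $\|\Div\A\|_\infty c_\A^{-1}C_{P,\Omega}$ up to $(\gamma/(\gamma+1))^2/C_{P,\Omega}$, which exceeds $\gamma/(\gamma+1)$. Nor can the square on $C_{P,\Omega}$ come from \emph{a second application of Poincar\'e}. Under the dilation $x\mapsto\lambda x$, $\A\mapsto\A(\cdot/\lambda)$, both the inequality to be proved and $\|\Div\A\|_\infty C_{P,\Omega}$ are invariant, whereas $\|\Div\A\|_\infty C_{P,\Omega}^2$ scales like $\lambda$. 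What your argument does prove is the lemma under the displayed condition. That condition follows from \eqref{eq:smallA}, the hypothesis of Theorem~\ref{maintheorem} where the lemma is used, because $(\gamma+1)/\gamma>1$. You should state this explicitly instead of leaving the constants unchecked.

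The second is the justification, where your diagnosis is partly off. The integral you absorb \emph{is} finite a priori. By the a.e.\ chain rule
\[
P(u^{\gamma+1})=\gamma(\gamma+1)\,u^{\gamma-1}\|\nabla u\|_\A^2+(\gamma+1)\,u^{\gamma}\,Pu,
\]
together with the hypotheses $u^{\gamma+1}\in W^{2,1}(\Omega)$ and $|Pu|u^\gamma\in L^1(\Omega)$, the term $u^{\gamma-1}\|\nabla u\|_\A^2$ is integrable.

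What is really missing is the boundary term. The fact that $u\equiv0$ on $\partial\Omega$ does not by itself kill it. Indeed, $u^\gamma\nabla u=\frac{1}{\gamma+1}\nabla(u^{\gamma+1})$ can have nonzero trace for functions vanishing on $\partial\Omega$ (e.g.\ $u=\delta_\Omega^{1/(\gamma+1)}$ near $\partial\Omega$). Your truncation sketch never says how this term is controlled.

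Integrate the identity above and write $P=\Div(\A\nabla\cdot)-\Div\A\cdot\nabla$. This gives exactly your identity plus the term $\frac{1}{\gamma+1}\int_\Omega\Div(\A\nabla u^{\gamma+1})$, the outward flux of $\A\nabla(u^{\gamma+1})\in W^{1,1}(\Omega;\R^d)$. You need this flux to be $\le0$, which is plausible because $u^{\gamma+1}\ge0$ vanishes on $\partial\Omega$. This is precisely the boundary term $\Theta$ in the paper's proof, and the paper takes $\Theta\le0$ from \cite[Theorem~4.1, i)]{MR4948792}. With it, your absorption is legitimate without any truncation. It then yields a self-contained proof under your smallness condition, which is weaker than \eqref{eq:smallA}.

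Your \emph{alternative} route is the paper's, but there the entire proof is the verification you skip. It consists of:
\begin{itemize}
\item the triple $(h,H,\widetilde H)=(\gamma s^{\gamma-1},s^\gamma,\frac{s^{\gamma+1}}{\gamma+1})$;
\item the hypotheses (G), (I), (u-I), (u-$\widetilde H$), and ($\mathcal G_H$) with $C_{\widetilde H}=\frac{\gamma+1}{\gamma}$;
\item the condition $\kappa<1$;
\item the boundary term $\Theta$ and its sign.
\end{itemize}
Your description of the cited inequality omits the boundary term $\Theta$ altogether.
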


\begin{proof} 
First, let us explain the strategy. The estimate will follow from \cite[Theorems~4.1 and 4.4]{MR4948792} once we verify the relevant assumptions. Specifically, \cite[Theorem~4.4, ii)]{MR4948792} implies
    \begin{equation*}
        \int_\Omega \|\nabla u\|_{\A}^2 u^{\gamma -1} \lesssim \int_\Omega \lvert Pu \rvert u^\gamma + \Theta,
    \end{equation*}
    where $\Theta$ is a certain boundary term. Then \cite[Theorem~4.1, i)]{MR4948792} implies $\Theta \leq 0$, so this term can be omitted in the estimates.
    
    We shall now verify the assumptions of the required theorems. We do not repeat their full formulation because of its substantial length and the fact that we only require a specific case of the more general inequalities. We only explain how individual assumptions are verified, staying consistent with the notation of \cite{MR4948792} and providing the relevant page numbers. We note that the paper \cite{MR4948792} deals precisely with operators in nondivergence form as in \eqref{eq:P}. We consider the following nonlinearities 
     \begin{align*}
        h(s) = \gamma s^{\gamma-1}, \quad 
        H(s) = s^{\gamma}, \quad 
        \widetilde{H}(s) = \frac{1}{\gamma+1} s^{\gamma+1},\quad  \text{for } s \in (0, \infty).
    \end{align*}
    Then $\widetilde{H}' = H$ and $H' = h$ on $(0, \infty)$, while $H$ and $\widetilde{H}$ clearly have continuous extension to $[0, \infty)$. Hence, our triplet of functions $(h, H, \widetilde{H})$ satisfies the assumption \textbf{(I)} (\cite[p.~8]{MR4948792}) for the interval $I = [0, \infty)$. The general assumption \textbf{(G)} (\cite[p.~7]{MR4948792}) is satisfied as $h$ is positive, $\Omega\in C^{1,1}$, $\A\in C^1(\overline{\Omega})$, and \eqref{ellipticity} holds. The assumption \textbf{(u-I)} (\cite[p.~8]{MR4948792}) is satisfied because of our assumptions on $u$. Furthermore, $\widetilde{H} \geq 0$, $\widetilde{H}(0) = 0$, so the assumption $u \equiv 0$ on $\partial \Omega$ ensures $\widetilde{H}(u) \equiv 0$ on $\partial \Omega$ and so the assumption \textbf{(u-$\widetilde{\text{H}}$)} (\cite[p.~9]{MR4948792}) is satisfied as well. As
    \begin{equation*}
        \frac{H^2(s)}{h(s)} = \frac{1}{\gamma} s^{\gamma+1} = \frac{\gamma+1}{\gamma} \widetilde{H}(s),
    \end{equation*}
    we find that the assumption {\bf ($\mathcal{G}_H$)} (\cite[p.~9]{MR4948792}) also holds, with the optimal value of the constant $C_{\widetilde{H}}$ being $\frac{\gamma+1}{\gamma}$.

    Hence, all the assumptions of \cite[Theorem~4.1, i)]{MR4948792} (\textbf{(G)}, \textbf{(I)}, \textbf{(u-I)}, \textbf{(u-$\widetilde{\text{H}}$)}) have been verified. Regarding \cite[Theorem~4.4, ii)]{MR4948792}, we have verified the assumptions \textbf{(G)}, \textbf{(I)}, \textbf{(u-I)}, and that  $\widetilde{H}(u) \equiv 0$ on $\partial \Omega$, while the final assumption ``$0< \kappa <1$'' is satisfied thanks to \eqref{eq:KPR_assumption}.
\end{proof}

We will also use a similar inequality with $\nabla^{(2)}u$ in place of the elliptic operator $Pu$, that follows from \cite[Theorem~3.2 and Remark~3.1]{MR3867979} where we use the nonlinearity $h(t) = t^{\gamma-1}$ and $p=2$. Let us note that we cannot deduce it directly from Lemma~\ref{lemm:KPR_estimate} by using the inequality $|Pu|\le\|\nabla^{(2)}u\|$, because the assumptions on $u$ are different. In particular, the following lemma does not require that $u^{\gamma+1} \in W^{2,1}(\Omega)$.

\begin{lemma}[nonlinear inequality for $\nabla^{(2)}u$, \cite{MR3867979}]\label{withTomasz}
Assume that $\Omega\in C^{0,1}$. If $u\in C(\overline{\Omega})\cap W^{2,1}(\Omega)$, $u\equiv 0$ on $\partial\Omega$, and $u>0$ in $\Omega$, then
  \begin{equation*}
        \int_\Omega \|\nabla u\|^2 u^{\gamma -1} \le C  \int_\Omega \| \nabla^{(2)}u \| u^\gamma ,
    \end{equation*}
    with the constant $C$ not depending on $u$. 
\end{lemma}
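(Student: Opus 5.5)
Lemma~\ref{withTomasz} is cited from \cite{MR3867979}, so the plan is to show why the inequality holds by an integration by parts argument, following the scheme of that paper. Take $h(t)=t^{\gamma-1}$ and $H(t)=t^\gamma/\gamma$. Formally
\begin{align*}
\int_\Omega \|\nabla u\|^2 u^{\gamma-1} = \int_\Omega \nabla u\cdot\nabla H(u) = -\int_\Omega \Delta u\, H(u) + \text{boundary term},
\end{align*}
and $|\Delta u|\le \sqrt d\,\|\nabla^{(2)}u\|$. The boundary term $\int_{\partial\Omega} H(u)\,\partial_\nu u$ vanishes because $u\equiv0$ on $\partial\Omega$ and $H(0)=0$. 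This is exactly the estimate we want, so the work lies in justifying these manipulations under the weak hypotheses.

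\textbf{Step 1: local cut-off.} Since $u\in W^{2,1}(\Omega)$ is only locally regular enough, we would not integrate by parts over all of $\Omega$. Instead we work on the sets $\{u>\epsilon\}$, or use the truncation $H_\epsilon(u)=H((u-\epsilon)_+)$, which is Lipschitz in $u$. By continuity of $u$ and $u=0$ on $\partial\Omega$, the function $(u-\epsilon)_+$ is compactly supported in $\Omega$. Hence $H_\epsilon(u)\in W^{1,1}$ with compact support, and $\nabla u\in W^{1,1}$. The product $\nabla u\, H_\epsilon(u)$ is therefore integrable with integrable divergence, and the divergence theorem gives
\begin{align*}
\int_{\{u>\epsilon\}} \|\nabla u\|^2 h(u-\epsilon) = -\int_\Omega \Delta u\, H((u-\epsilon)_+).
\end{align*}
The right-hand side is bounded by $\gamma^{-1}\sqrt d\int_\Omega \|\nabla^{(2)}u\|u^\gamma$.

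\textbf{Step 2: pass to the limit.} Let $\epsilon\to0$ on the left. For $\gamma\ge1$ the weight $h(u-\epsilon)\uparrow u^{\gamma-1}$, so monotone convergence applies directly. For $\gamma<1$ the weight $(u-\epsilon)^{\gamma-1}\ge u^{\gamma-1}$ on $\{u>\epsilon\}$, so Fatou's lemma suffices. Either way we obtain the inequality with $C=\sqrt d/\gamma$.

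\textbf{Main obstacle.} The delicate point is Step 1 when $\gamma<1$. There $h$ blows up at $0$, and we must check that the product rule $\nabla H_\epsilon(u)=h(u-\epsilon)\nabla u$ and the integrability of $\|\nabla u\|^2h(u-\epsilon)$ hold near the level set $\{u=\epsilon\}$. If this is problematic, I would first replace $H_\epsilon$ by the bounded-derivative truncation $H(\min((u-\epsilon)_+,M))$ composed with a smooth approximation of $t\mapsto t^\gamma$ that is Lipschitz near $0$. I would then run the argument and remove the approximation by monotone convergence. The hypothesis $\Omega\in C^{0,1}$ is only needed so that the pointwise representative from \eqref{valueatpoint} makes "$u\equiv0$ on $\partial\Omega$" meaningful, which together with continuity yields the compact support of the truncations.
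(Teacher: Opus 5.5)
Your approach differs from the paper's, which gives no argument and simply invokes \cite[Theorem~3.2 and Remark~3.1]{MR3867979} with $h(t)=t^{\gamma-1}$, $p=2$. Testing against a truncation of $u^\gamma/\gamma$ and bounding $|\Delta u|\le\sqrt d\,\|\nabla^{(2)}u\|$ is a sound self-contained alternative. Once repaired, it even gives the explicit constant $C=\sqrt d/\gamma$ and does not use the Lipschitz regularity of $\Omega$. However, Step~1 has a genuine gap, and it is not the one you flag.

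\textbf{The gap.} You assert that $H_\epsilon(u)\nabla u$ is integrable ``with integrable divergence''. That divergence is $H_\epsilon(u)\Delta u+h(u-\epsilon)\|\nabla u\|^2\mathbf{1}_{\{u>\epsilon\}}$, and nothing you have makes the second term integrable. From $u\in W^{2,1}(\Omega)$ you only get $\nabla u\in W^{1,1}\subset L^{d/(d-1)}$, and $d/(d-1)<2$ as soon as $d\ge 3$. A product of two $W^{1,1}$ functions is not $W^{1,1}$ in general. In fact $\int_{\{u>\epsilon\}}\|\nabla u\|^2<\infty$ is a consequence of the lemma itself, and it genuinely needs boundedness of $u$. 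For $d\ge3$ and $2-d<b\le 1-d/2$, the function $|x|^b$ has integrable Hessian near $0$, while $\|\nabla (|x|^b)\|^2$ is not integrable there. So applying the divergence theorem at this point is circular. Your fallback of making $H$ Lipschitz near $0$ does not help, since even with $h$ bounded the term is of size $\|\nabla u\|^2$.

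\textbf{How to close it.} Use a smoothing argument that yields only an inequality, which is all you need.
\begin{itemize}
\item Replace your truncation by $\psi_\epsilon:=(u^\gamma-\epsilon^\gamma)_+/\gamma$. This is a Lipschitz function of $u$ for every $\gamma>0$, so your $\gamma<1$ concern disappears.
\item The set $K=\{u\ge\epsilon/2\}$ is compact in $\Omega$. Fix $U\Subset\Omega$ containing $K$, and set $u_\delta=u\ast\varphi_\delta$ for $\delta<\operatorname{dist}(U,\partial\Omega)$.
\item By uniform continuity of $u$ on $\overline\Omega$, $u_\delta\to u$ uniformly on $\overline U$. Hence for small $\delta$ the function $(u_\delta^\gamma-\epsilon^\gamma)_+$ vanishes on $\overline U\setminus K$, and classical integration by parts gives
\begin{align*}
\int_{U\cap\{u_\delta>\epsilon\}}u_\delta^{\gamma-1}\|\nabla u_\delta\|^2=-\frac1\gamma\int_U\Delta u_\delta\,(u_\delta^\gamma-\epsilon^\gamma)_+ .
\end{align*}
\item Since $\Delta u_\delta\to\Delta u$ in $L^1(U)$, the right-hand side tends to $-\int_\Omega\Delta u\,\psi_\epsilon\le\frac{\sqrt d}{\gamma}\int_\Omega\|\nabla^{(2)}u\|u^\gamma$.
\item Fatou's lemma on the nonnegative left-hand side, along a subsequence with $\nabla u_\delta\to\nabla u$ a.e., bounds $\int_{\{u>\epsilon\}}u^{\gamma-1}\|\nabla u\|^2$ by the $\liminf$.
\item Letting $\epsilon\to0$ by monotone convergence finishes the proof.
\end{itemize}
Note that continuity of $u$ up to the boundary is used essentially here, for the uniform convergence of $u_\delta$ and control of the supports. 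It is not used merely to give meaning to $u\equiv0$ on $\partial\Omega$.
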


Our next lemma applies to the regularity of positive strong solutions to the linear equation $-Pu=F$ where $F$ is bounded and provides a framework for applying Lemma~\ref{lemm:KPR_estimate}. Our particular interest is in the regularity of compositions such as $u^\tau$, where $\tau\ge 1$.

\begin{lemma}[existence and regularity of strong solutions to $-Pu=F$] \label{lemm:KPR_assumption_verification}
    Assume that $\Omega \in C^{1,1}$ and that $\A\in C^{1}(\overline{\Omega})$ satisfies \eqref{ellipticity}. Let $P$ be given by \eqref{eq:P} and let $F\in L^\infty(\Omega)$ be nonnegative almost everywhere,  $F\not\equiv 0$. Then, the problem 
    \begin{align*}
        \begin{cases} -Pu = F\quad &\text{in}\ \Omega,\\
            u \equiv 0\quad &\text{on}\ \partial \Omega.
        \end{cases}
    \end{align*}
    has a unique strong solution $u$ (see Definition~\ref{defStrongSol}), which belongs to $W^{2,p}(\Omega)\cap W^{1,p}_0(\Omega)$ for all $1<p<\infty$. 
    Furthermore, the following statements hold. 
\begin{enumerate}[label={\bf \textup{(\alph*)}}]\label{el-reg} 
        \item \label{lemm:KPR_assumption_verification_i} For any given $p\in (1,\infty)$ we have
        \begin{equation}\label{cald-zyg}
        \| u\|_{W^{2,p}(\Omega)}\le c_p \| F\|_{L^p(\Omega)},    
        \end{equation}
        where the constant $c_p$ does not depend on $u$;
        \item \label{lemm:KPR_assumption_verification_ii} $u\in C^1 (\overline{\Omega})$ and $u\equiv 0$ on $\partial\Omega$ and $u>0$ inside $\Omega$;
        \item \label{lemm:KPR_assumption_verification_iv} We have $u(x) \approx \delta_\Omega(x)$ on $\Omega$, with $d_\Omega (\cdot)$ is as in \eqref{d-omega}.
        \item \label{lemm:KPR_assumption_verification_iii}  For any $\gamma>0$ we have $u^{\gamma+1}\in W^{2,1}(\Omega)$.
        \end{enumerate}
\end{lemma}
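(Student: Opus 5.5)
Existence, uniqueness and \ref{lemm:KPR_assumption_verification_i} come from the classical $L^p$ theory for nondivergence operators with continuous coefficients on $C^{1,1}$ domains, as in Gilbarg--Trudinger, Theorem~9.15 and Lemma~9.17. Since $\A\in C^1(\overline{\Omega})\subset C(\overline{\Omega})$ and $\partial\Omega\in C^{1,1}$, for every $p\in(1,\infty)$ the problem has a unique solution $u\in W^{2,p}(\Omega)\cap W^{1,p}_0(\Omega)$, and
\[
\|u\|_{W^{2,p}}\le c_p\|F\|_{L^p}.
\]
No zero-order term is present, so the maximum principle gives the a priori bound without a $\|u\|_{L^p}$ term. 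The solutions for different $p$ coincide by uniqueness, because $F\in L^\infty\subset L^p$ for all $p$. Uniqueness among strong solutions in the sense of Definition~\ref{defStrongSol}, i.e.\ only $W^{2,1}_{loc}\cap W^{1,p}_0$, needs a little care. The difference $w$ of two such solutions satisfies $Pw=0$ a.e., and interior $W^{2,q}$ estimates bootstrap it to $W^{2,q}_{loc}$ for all $q$. Together with $w\in W^{1,p}_0$, the Aleksandrov--Bakelman--Pucci maximum principle then gives $w\equiv 0$; one may argue on subdomains $\Omega_\epsilon^c$ and let $\epsilon\to0$.

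For \ref{lemm:KPR_assumption_verification_ii}, choose $p>d$; Morrey's embedding gives $u\in C^{1,1-d/p}(\overline{\Omega})$ with $u=0$ on $\partial\Omega$. Since $-Pu=F\ge0$, the strong maximum principle for $W^{2,d}_{loc}$ solutions (Gilbarg--Trudinger, Theorem~9.6) gives $u\ge0$. Moreover, $u$ is either identically zero or strictly positive in $\Omega$, and the first alternative is excluded because $F\not\equiv0$.

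For \ref{lemm:KPR_assumption_verification_iv}, the upper bound $u\lesssim\delta_\Omega$ follows from $u\in C^1(\overline{\Omega})$ together with $u=0$ on $\partial\Omega$. The lower bound is the Hopf lemma. Since $\Omega\in C^{1,1}$, it satisfies a uniform interior ball condition with radius $r_0$. Because $u>0$ is continuous, $u\ge m>0$ on the compact set $\{\delta_\Omega\ge r_0/2\}$. On each interior ball one compares $u$ with the standard barrier $v(x)=e^{-\alpha|x-x_0|^2}-e^{-\alpha r_0^2}$ on the annulus, taking $\alpha$ large depending only on $c_{\A},C_{\A}$; this gives $Pv\ge0$. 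The uniform interior ball radius then yields $u\ge c\,\delta_\Omega$ with $c$ uniform. I expect this to be the main obstacle: the point is the uniformity of the Hopf constant along the boundary, and the fact that only $W^{2,p}$ regularity is available. The comparison principle for $W^{2,d}_{loc}\cap C(\overline{\Omega})$ functions handles the latter. Alternatively, one could use the Green function lower bound $G_\Omega(x,y)\gtrsim\delta_\Omega(x)\delta_\Omega(y)$ integrated against $F$ on a set where $F>0$.

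For \ref{lemm:KPR_assumption_verification_iii}, set $w=u^{\gamma+1}$ and compute
\[
\nabla w=(\gamma+1)u^\gamma\nabla u,\qquad \nabla^{(2)}w=(\gamma+1)u^\gamma\nabla^{(2)}u+\gamma(\gamma+1)u^{\gamma-1}\nabla u\otimes\nabla u.
\]
This chain rule is justified on $\{u>\epsilon\}$, or via $(u+\epsilon)^{\gamma+1}$ with $\epsilon\to0$. The first term is in $L^1$ because $u$ is bounded and $\nabla^{(2)}u\in L^p$. For the second, $|\nabla u|$ is bounded and $u^{\gamma-1}\approx\delta_\Omega^{\gamma-1}$ by \ref{lemm:KPR_assumption_verification_iv}; this is integrable since $\gamma-1>-1$ and $\Omega\in C^{1,1}$, so the coarea formula gives $\int_\Omega\delta_\Omega^{s}<\infty$ for $s>-1$. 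One could instead invoke Lemma~\ref{withTomasz} to bound $\int\|\nabla u\|^2u^{\gamma-1}$. Passing to the limit as $\epsilon\to0$ by dominated convergence shows $w\in W^{2,1}(\Omega)$, and $w$, $\nabla w$ are clearly in $L^1$.
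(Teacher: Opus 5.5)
Your proof is correct. For existence, uniqueness, (a), $u\in C^1(\overline{\Omega})$ and the upper bound $u\lesssim\delta_\Omega$, it is the paper's argument: Gilbarg--Trudinger Theorem~9.15, Sobolev embedding with $p>d$, and integrating $\nabla u$ along the normal segment. For the remaining parts you take a different route.

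\textbf{Positivity and the lower bound in (b), (c).} The paper obtains both from the Green representation \eqref{green0} and the two-sided estimates \eqref{eq:Greenest2}--\eqref{eq:Greenest}, integrated over a set $U\subset\subset\Omega$ on which $F$ is nontrivial. You instead use the strong maximum principle and a Hopf barrier on uniform interior balls, compared via ABP. Note that your $\alpha$ also depends on $d$ and $r_0$. Your argument is self-contained and elementary. The paper's route buys a tool it reuses in Lemma~\ref{lem:linear} for $P^*$. There the zero-order term $\Div(\Div\A)$ may have the wrong sign, so maximum-principle and Hopf arguments are not directly available, yet $G^*_\Omega(x,y)=G_\Omega(y,x)$ still gives the bounds.

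\textbf{The $W^{2,1}$ claim (d).} The paper bounds $\int_\Omega\|\nabla u\|^2u^{\gamma-1}$ by Lemma~\ref{withTomasz}, which needs no lower bound on $u$. You instead combine (c) with $\nabla u\in L^\infty$ to get $\|\nabla u\|^2u^{\gamma-1}\lesssim\delta_\Omega^{\gamma-1}\in L^1(\Omega)$. This is simpler, but it rests on the Hopf-type bound. No $\epsilon$-regularization is needed in the chain rule: $u$ is bounded below on compact subsets, so the formulas hold in $W^{2,1}_{loc}(\Omega)$. Global integrability of the right-hand sides then gives $u^{\gamma+1}\in W^{2,1}(\Omega)$.

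\textbf{Caveat on uniqueness.} Your side remark on uniqueness in the larger class of Definition~\ref{defStrongSol} is only a sketch, and two steps do not go through as stated.
\begin{itemize}
\item The interior bootstrap (Gilbarg--Trudinger Lemma~9.16) needs a starting exponent $p>1$, not merely $W^{2,1}_{loc}$. This can be repaired by a commutator argument using $\A\in C^1$, as in Lemma~\ref{lem:Friedrichs}.
\item ABP on $\{\delta_\Omega>\epsilon\}$ would require $\sup_{\{\delta_\Omega=\epsilon\}}|w|\to 0$. Knowing $w\in W^{1,p}_0(\Omega)$ for some $p$ does not give this.
\end{itemize}
The paper does not treat this point either. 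It uses only uniqueness in $W^{2,p}(\Omega)\cap W^{1,p}_0(\Omega)$ from Theorem~9.15, which is all that is needed later.
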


\begin{proof}
Since $F\in L^\infty(\Omega)\subseteq L^p(\Omega)$, where $1\le p<\infty$ is arbitrary, the existence of $u$ and the statement \ref{lemm:KPR_assumption_verification_i} follow from the elliptic regularity theory \cite[Theorem~9.15]{MR1814364}.
The Sobolev embeddings with $p$ larger than the dimension $d$, guarantee that $\nabla u$ is continuous up to the boundary (see e.g.~\cite[Exercise 12.59]{Leoni17}), so it is also bounded.
Moreover, $u\equiv 0$ on $\partial\Omega$, because $u\in W^{1,p}_0(\Omega)$.  
Let us prove the strict positivity of $u$.

It is known, that the solution is represented by Green formula, which holds for almost every $x\in\Omega$:
\begin{equation}\label{green0}
 u(x) = \int_\Omega G_\Omega(x,y) F(y)\, dy,
 \end{equation}
where $G_\Omega(\cdot,\cdot)$ is the Green function of the operator $P$ in $\Omega$, see \cite[Theorem~2.3]{MR4554661}.
 The strict positivity of $u$ inside $\Omega$ follows from the nontriviality of $F$ and
  sharp estimates, in particular strict positivity of the Green function:
\begin{align}\label{eq:Greenest2}
&\textnormal{for}\ d=2:\qquad G_\Omega(x,y)\approx \ln \bigg(1 + \frac{\delta_\Omega(x)\delta_\Omega(y)}{\lVert x-y \rVert^2}\bigg),\quad x,y\in \Omega,\\
&\textnormal{for}\ d\geq 3:\qquad 
G_\Omega(x,y)\approx \lVert x-y \rVert^{2-d}\bigg(1\wedge \frac{\delta_\Omega(x)}{\lVert x-y \rVert}\bigg)\bigg(1\wedge \frac{\delta_\Omega(y)}{\lVert x-y \rVert}\bigg),\quad x,y\in \Omega,\label{eq:Greenest}
\end{align}
 see \cite{MR658944}, \cite{MR842803}, \cite{MR1046423}, and \cite[Theorem~1.1]{MR4554661} for the explicit formulation of \eqref{eq:Greenest}.
This gives  the statement \ref{lemm:KPR_assumption_verification_ii}.

Let us prove the statement \ref{lemm:KPR_assumption_verification_iv}.
Since $F$ is nontrivial, there exists an open set $U$, relatively compact in $\Omega$ such that $F$ is also nontrivial on $U$. Therefore, for any open $V\subset\subset \Omega$ such that $U\subset \subset V$,
we have
\begin{align*}
    \bigg(1\wedge \frac{\delta_\Omega(x)}{\lVert x-y \rVert}\bigg)\gtrsim\delta_\Omega(x)\quad \textnormal{and}\quad \frac{\delta_\Omega(x)\delta_\Omega(y)}{\lVert x-y \rVert^2}\approx \delta_\Omega(x),\quad x\in \Omega\setminus V,\ y\in U.
\end{align*} 
Consequently, for $d\geq 3$, by \eqref{eq:Greenest},
\begin{align*}
    u(x) \gtrsim \delta_\Omega(x) \int_U F(y) \lVert x-y \rVert^{2-d}\bigg(1\wedge \frac{\delta_\Omega(y)}{\lVert x-y \rVert}\bigg)\, dy \gtrsim \delta_\Omega(x)\int_U F(y)\, dy \gtrsim \delta_\Omega(x),\quad   x\in \Omega\setminus V, 
\end{align*}
and for $d=2$, since $\ln(1+r) \approx r$ for $r$ close to $0$, by \eqref{eq:Greenest2},
\begin{align*}
    u(x) \gtrsim \delta_\Omega(x)\int_U F(y)\, dy \gtrsim \delta_\Omega(x),\quad x\in \Omega\setminus V.
\end{align*}
Since we also have $\delta_\Omega(x) \approx 1 \approx u (x)$ in $V$, we find that $u(x)\gtrsim \delta_\Omega(x)$ for all $x\in \Omega$. 

 As for the other estimate, $\partial \Omega$  is $C^{1,1}$, $u\in C^1(\overline{\Omega})$, and $u=0$ on $\partial \Omega$, so for $x$ close to the boundary there is a unique $x_0\in \partial \Omega$ such that $\delta_\Omega(x) = |x-x_0|$ and we have $$u(x) = u(x) - u(x_0) \leq \int_0^1 |\nabla u(x_0 + \tau(x-x_0))||x-x_0|\, d\tau  \lesssim \|\nabla u\|_\infty\delta_\Omega(x).$$ Since $u$ is also bounded in  $\Omega$, this proves \ref{lemm:KPR_assumption_verification_iv}.

Let us prove the statement \ref{lemm:KPR_assumption_verification_iii}.
Since $u(x)>0$ for all $x\in \Omega$, we have for all $i,j\in \{ 1,\dots, d\}$
\begin{align}\label{oszac-pierwsza-poch}\partial_{i}(u(x)^{\gamma+1})&= (\gamma+1)u(x)^\gamma \partial_i u(x),\\
\label{oszac-druga-poch}
    \partial_{ij} (u(x)^{\gamma+1}) &= (\gamma+1)\gamma u(x)^{\gamma-1} \partial_i u(x) \partial_j u(x) + (\gamma+1)u(x)^\gamma \partial_{ij}u(x),\\
\label{oszac-norm-druga-poch}
 \|\nabla^{(2)} (u(x)^{\gamma+1})\| &\le  (\gamma+1)\gamma u(x)^{\gamma-1} \|\nabla u(x)\|^2 + (\gamma+1)u(x)^\gamma  \|\nabla^{(2)} u(x)\|,
\end{align}
almost everywhere in $\Omega$. Then, since $u$ and $\nabla u$ are bounded, we have $\|\nabla(u^{\gamma+1})\|\in L^1(\Omega)$. Furthermore, since $u\in W^{2,p}(\Omega)$ for all $p\in [1,\infty)$ and $u\equiv 0$ on $\partial \Omega$, we are in a position to use Lemma~\ref{withTomasz}, which together with the fact that $u$ is bounded implies that 
\begin{align*}
   \int_\Omega  \|\nabla^{(2)} (u(x)^{\gamma+1})\| \lesssim \int_\Omega u^\gamma  \|\nabla^{(2)} u\| \lesssim \|u\|_{W^{2,1}(\Omega)}<\infty.
\end{align*}
This proves \ref{lemm:KPR_assumption_verification_iii}, which ends the proof of the lemma.
\end{proof}
\begin{remark}
\label{hopf-lem}
The usual formulation of the Hopf lemma asserts the positivity of the inward normal derivatives for positive functions $u$ satisfying $-Pu\geq 0$. The estimate $u\gtrsim \delta_\Omega$ following from \ref{lemm:KPR_assumption_verification_iv} easily implies this property; it is also equivalent if $u\in C^1(\overline{\Omega})$, because for such functions the Hopf lemma implies that the inward normal derivative must be uniformly positive by the interior ball condition for $C^{1,1}$ domains.
\end{remark}

We will prove a Kato-type inequality for strong solutions of $-Pu = F$. A different version will be given in Lemma~\ref{lem:Kato} for very weak solutions. In the present statement we assume more regularity on the solution and the right-hand side, but due to that we are able to work with $\A$ which is only once differentiable. Results of this type are known in various settings, see e.g.~\cite{MR4651280}, but we could not find a reference covering our case, so we give a proof. 

\begin{lemma}[a Kato-type inequality for strong solutions]\label{lem:Katoidentity}
    Assume that $\Omega \in C^{1,1}$ and that $\A\in C^{1}(\overline{\Omega})$ satisfies \eqref{ellipticity}. Suppose that $F\in L^\infty(\Omega)$ and that $u\in W^{2,p}(\Omega)\cap W^{1,p}_0(\Omega)$ for all $p\in(1,\infty)$ is a strong solution to \eqref{eq:linearstrong}. Then,    \begin{align}\label{eq:Kato1}
    \int_{\Omega} \A \nabla (u_+) \cdot \nabla \phi + (\Div \A \cdot \nabla (u_+))\phi \leq \int_{\{u>0\}} F\phi,\quad \phi\in H^1_0(\Omega),\ \phi \geq 0.
\end{align}
\end{lemma}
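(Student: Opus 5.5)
The plan is to test the equation with $\phi$ multiplied by a Lipschitz truncation of $u$, drop a nonnegative term, and pass to the limit in the truncation parameter.

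\textbf{Step 1: a weak identity for all $\phi\in H^1_0(\Omega)$.} Since $\A$ is symmetric and $\A\in C^1(\overline{\Omega})$,
\begin{align*}
\Div(\A\nabla u)=\sum_{i,j}\partial_i\big(a_{ij}\partial_j u\big)=Pu+\Div\A\cdot\nabla u
\end{align*}
holds almost everywhere. Because $u\in W^{2,p}(\Omega)$ for every $p$, the field $\A\nabla u$ belongs to $W^{1,2}(\Omega,\R^d)$. For $\phi\in C_c^\infty(\Omega)$, integration by parts therefore gives
\begin{align*}
\int_\Omega \A\nabla u\cdot\nabla\phi+(\Div\A\cdot\nabla u)\phi=-\int_\Omega (Pu)\,\phi=\int_\Omega F\phi .
\end{align*}
Both sides are continuous in $\phi$ with respect to the $H^1$ norm: $\nabla u$ is bounded, $\Div\A$ is bounded, $F\in L^\infty(\Omega)$, and $\Omega$ is bounded. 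By density of $C_c^\infty(\Omega)$ in $H^1_0(\Omega)$, the identity extends to all $\phi\in H^1_0(\Omega)$.

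\textbf{Step 2: testing with a truncation.} Fix $\phi\in H^1_0(\Omega)$ with $\phi\ge0$. For $\epsilon>0$ let $H_\epsilon(s)=\min(s_+/\epsilon,1)$, which is Lipschitz, nondecreasing and bounded. Since $u\in W^{1,\infty}(\Omega)$ by Lemma~\ref{lemm:KPR_assumption_verification}\ref{lemm:KPR_assumption_verification_ii}, the chain rule gives $H_\epsilon(u)\in W^{1,\infty}(\Omega)$. Hence $\psi_\epsilon:=\phi\,H_\epsilon(u)\in H^1_0(\Omega)$ and $\psi_\epsilon\ge0$.

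Plugging $\psi_\epsilon$ into the identity of Step 1 gives
\begin{align*}
\int_\Omega H_\epsilon(u)\,\A\nabla u\cdot\nabla\phi+\int_\Omega H_\epsilon'(u)\,\|\nabla u\|_\A^2\,\phi+\int_\Omega(\Div\A\cdot\nabla u)\,H_\epsilon(u)\,\phi=\int_\Omega F\,H_\epsilon(u)\,\phi .
\end{align*}
The middle term is nonnegative because $H_\epsilon'\ge0$ and $\phi\ge0$, so it can be dropped. This yields the inequality with $H_\epsilon(u)$ inserted in the three remaining terms.

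\textbf{Step 3: the limit $\epsilon\to0$.} Pointwise, $H_\epsilon(u)\to\chi_{\{u>0\}}$, and $|H_\epsilon|\le1$. All integrands are dominated by $L^1$ functions, since $\nabla u,\ \Div\A,\ F\in L^\infty$ and $\phi,\nabla\phi\in L^2\subset L^1$. Dominated convergence then applies to every term. Using $\chi_{\{u>0\}}\nabla u=\nabla(u_+)$ almost everywhere (Stampacchia), we obtain exactly \eqref{eq:Kato1}.

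\textbf{Main obstacle.} The only step needing care is Step 1, namely justifying integration by parts against $\phi\in H^1_0(\Omega)$ rather than against smooth test functions. This is handled by the $W^{1,2}$ regularity of $\A\nabla u$ together with the density argument above. Step 1 is also the only place where $\A\in C^1(\overline{\Omega})$ and the $W^{2,p}$ regularity of $u$ from Lemma~\ref{lemm:KPR_assumption_verification} are used.
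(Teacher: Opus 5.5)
Your argument is correct and is essentially the paper's proof. The paper takes a $C^2$ convex approximation $\Phi_\epsilon$ of $x_+$ and uses the pointwise chain rule $-P(\Phi_\epsilon(u)) = -\Phi_\epsilon'(u)Pu - \Phi_\epsilon''(u)\|\nabla u\|_{\A}^2 \le \Phi_\epsilon'(u)F$. It then integrates by parts against $\phi\in C_c^\infty(\Omega)$, lets $\epsilon\to 0$, and extends to $H^1_0(\Omega)$ by density. So its $\Phi_\epsilon'$ plays the role of your $H_\epsilon$, and its dropped term $\Phi_\epsilon''(u)\|\nabla u\|_{\A}^2\phi$ is your $H_\epsilon'(u)\|\nabla u\|_{\A}^2\phi$.

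Your ordering (weak formulation first, then the test function $\phi H_\epsilon(u)$) needs only the first-order chain rule for a Lipschitz truncation. One correction: take $u\in W^{1,\infty}(\Omega)$ directly from the hypothesis $u\in W^{2,p}(\Omega)$ with $p>d$ via Morrey's embedding. Lemma~\ref{lemm:KPR_assumption_verification} is not the right citation, because its hypotheses ($F\ge 0$, $F\not\equiv 0$) need not hold here.
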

\begin{proof}
     First note that by Lemma~\ref{lemm:KPR_assumption_verification} the strong solution $u$  to \eqref{eq:linearstrong} indeed exists and $u\in W^{2,p}(\Omega)\cap W^{1,p}_0(\Omega)$ for all $p\in(1,\infty)$. 
     For $\epsilon >0$ consider convex $\Phi_\epsilon \in C^2(\R)$ given by
     \begin{align*}
         \Phi_\epsilon(x) = ((x^3 + \epsilon^3)^{1/3} - \epsilon)\textbf{1}_{[0,\infty)}(x).
     \end{align*}
     We have 
     \begin{align*}
         \Phi'_\epsilon(x) &= \frac{x^2}{(x^3 + \epsilon^3)^{\frac 23}}\textbf{1}_{[0,\infty)}(x),\qquad 
         \Phi''_\epsilon(x) = \frac{2x\epsilon^3}{(x^3 + \epsilon^3)^{\frac 53}}\textbf{1}_{[0,\infty)}(x),
     \end{align*}
     therefore $\Phi_\epsilon(0) = 0$, $0\leq \Phi_\epsilon'\leq 1$ and $\Phi'_\epsilon(0) = 0$. Furthermore, \begin{align*}
         \Phi_\epsilon(x) \to x_+,\quad \Phi'_\epsilon(x) \to \textbf{1}_{(0,\infty)}(x),\quad \Phi''_\epsilon(x) \to 0
     \end{align*} pointwise as $\epsilon \to 0$. By the chain rule, \eqref{ellipticity}, and the convexity of $\Phi$ we have
    \begin{align*}
        -P(\Phi_\epsilon(u)) = -\Phi_\epsilon'(u) Pu - \Phi_\epsilon''(u) \A \cdot \nabla u\otimes \nabla u \leq \Phi'_\epsilon(u) F.
    \end{align*}
    Integrating against nonnegative $\phi\in C_c^\infty(\Omega)$ we get
    \begin{align*}
        \int_{\Omega} -P(\Phi_\epsilon(u)) \phi \leq \int_{\Omega} \Phi'_\epsilon(u) F \phi.
    \end{align*}
    The right-hand side converges to $\int_\Omega F\textbf{1}_{\{u>0\}} \phi$, as $\epsilon\to 0$, by the Dominated Convergence Theorem. For the left-hand side we use integration by parts and the Dominated Convergence Theorem:
    \begin{align*}
        \int_{\Omega} -P\Phi_\epsilon(u)\phi &= \int_\Omega \A\nabla \Phi_\epsilon(u)\cdot  \nabla \phi + ((\Div \A) \cdot \nabla \Phi_\epsilon(u))\phi\\
        &= \int_\Omega \Phi_\epsilon'(u)\A\nabla u\cdot  \nabla \phi + ((\Div \A) \cdot \nabla u)\Phi_\epsilon'(u)\phi\\
        &\mathop{\longrightarrow}\limits_{\epsilon\to 0} \int_\Omega \A(\textbf{1}_{\{u>0\}}\nabla u)\cdot  \nabla \phi + ((\Div \A) \cdot (\textbf{1}_{\{u>0\}}\nabla u))\phi.
    \end{align*}
     It is known that $\textbf{1}_{\{u>0\}} \nabla u = \nabla (u_+)$ a.e., see e.g.~\cite[Lemma~7.6]{MR1814364}. Therefore, we get the estimate in \eqref{eq:Kato1} for nonnegative $\phi\in C_c^\infty(\Omega)$. By density, since $u_+ \in H^1_0(\Omega)$, the formula also holds for  $\phi\in H^1_0(\Omega)$ which are nonnegative almost everywhere.
\end{proof}

\bigskip
\subsection{Proof  of Theorem~\ref{maintheorem}}~\\

\subsubsection*{Explanation of the strategy}~\\
The proof consists of first solving the problem with regularized data: the function $f$ and the  nonlinearity $\tau(s) = s^{-\gamma}$. Then we obtain the solution to the original problem as the limit of solutions to regularized problems. We divide the proof into steps.

\bigskip
\subsubsection{Construction of the approximating sequence of the regularized solutions}~\\

We first define the truncation operator for functions $h\colon \Omega\to \R$:
$$
h^{{\langle n \rangle}}(x):= {\rm min}\{ h(x),n\} .
$$ We consider
 the mapping $$S_n\colon H^1_0(\Omega)\to H^1_0(\Omega), \text { where } \psi:=S_nv$$
 is the unique strong solution to the problem (see Lemma~\ref{lemm:KPR_assumption_verification} and \eqref{cald-zyg})
\begin{equation}\label{rownanie}
-P\psi= \frac{f^{{\langle n \rangle}}(x) }{(v^2+\frac{1}{n})^{\frac{\gamma}{2}}} = f^{{\langle n \rangle}}(x)\tau_n (v)=: G_{n}(v), \text{ where } \tau_n (s):= \frac{1}{(s^2+\frac{1}{n})^{\frac{\gamma }{2}}}.
\end{equation}
We will show that
\begin{enumerate}[label={\textup{(\alph*)}}]
    \item \label{main_proof-a} for every fixed $n$, $S_{n}$ is well-defined, continuous on $H^1_0(\Omega)$ and $S_n v$ is positive in $\Omega$;
    \item \label{main_proof-b} $S_n$ admits a fixed point, which is positive in $\Omega$. 
\end{enumerate}

This fixed point will be referred to as $u_n$.

We start with the assertion \ref{main_proof-a}.
The positivity of $S_nv$ and the fact that $S_n$ is well defined as the map into $H^1_0(\Omega)$
follows from Lemma \ref{lemm:KPR_assumption_verification}, because $G_n(v)\geq 0$ is nontrivial and belongs to $L^\infty(\Omega)$, when $v\in H^1_0(\Omega)$.

In order to show that $S_n$ is continuous, we estimate as follows: for $v_1,v_2\in H^1_0(\Omega)$ we have
\begin{align*}
    |G_n(v_1)(x) - G_n(v_2)(x)|^2 &\leq (f^{{\langle n \rangle}}(x))^2\bigg|\frac{1}{(v_1(x)^2 + \tfrac 1n)^{\frac \gamma 2}} - \frac{1}{(v_2(x)^2 + \tfrac 1n)^{\frac \gamma 2}}\bigg|^2\\
    &\leq C_n^2(f^{{\langle n \rangle}}(x))^2 |v_1(x) - v_2(x)|^2,  
\end{align*}
where $C_n$ is the Lipschitz constant of the map $\R \ni t\mapsto (t^2 + 1/n)^{-\gamma/2}$ that depends only on $n$ and $\gamma$. Now, let $\psi_1 = S_n v_1$ and $\psi_2 = S_n v_2$. Then, we have $-P(\psi_2-\psi_1)= G_n(v_2)- G_n(v_1)$ and 
$\|G_n(v_1) -G_n(v_2)\|_{L^2(\Omega)} \leq C \|v_1 - v_2\|_{L^2(\Omega)}$, which together with \eqref{cald-zyg} in
Lemma \ref{lemm:KPR_assumption_verification} implies that $S_n$ is continuous:
\begin{equation*}
    \|\psi_1 - \psi_2 \|_{H^1(\Omega)}\leq\|\psi_1 - \psi_2\|_{H^2(\Omega)} \lesssim \|G_n(v_1) -G_n(v_2)\|_{L^2(\Omega)} \lesssim \|v_1 - v_2\|_{L^2(\Omega)}.
\end{equation*}

Let us now show \ref{main_proof-b}.
By inequality \eqref{cald-zyg} in Lemma \ref{lemm:KPR_assumption_verification}, we deduce that for every $v\in H_0^1(\Omega)$, $\psi = S_n v$ belongs to $H^2(\Omega)$ and satisfies the Dirichlet condition.
Since $\tau_n(v)\le n^{\gamma/2}$, $f^{{\langle n \rangle}}\le n$, by \eqref{cald-zyg}
\begin{equation}\label{to-whole-space}
\| \psi\|_{H^2(\Omega)} \lesssim n^{1+\frac \gamma 2},   
\end{equation}
where the constant depends on $P$, $\Omega$, and $c_2$ from \eqref{cald-zyg}, but not on $n$ and $v$. This, and the compactness of the embedding $H^2(\Omega) \subseteq H^1(\Omega)$, see \cite[Theorem~5.7.1]{MR1625845}, implies that $S_n$ is a compact map on $H^1_0(\Omega)$. Since $S_n$ is continuous on $H^1_0(\Omega)$ and 
maps the space $H^1_0(\Omega)$ into its relatively compact subset, by the Schauder Fixed Point Theorem \cite[Corollary~11.2]{MR1814364} we find that it has a fixed point $u_n\in H^1_0(\Omega)$.

\bigskip
\noindent

\subsubsection{Continuity of the approximate solutions up to the boundary and monotonicity of the sequence at fixed $x$}~\\

By Lemma~\ref{lemm:KPR_assumption_verification} we find that $u_n\in C_0(\Omega)$ for every $n$.
We will now verify that for the functions $u_n$ obtained above the sequence
$(u_n(x))$ is nondecreasing for every $x\in \Omega$.

Let $n>m$ and note that on the set $\{x: u_m(x) > u_n(x)\}$ we have
\begin{align*}
    \frac{f^{{\langle m \rangle}}(x) }{\left(u_m^2(x)+\frac{1}{m}\right)^{\frac{\gamma}{2}}} - \frac{f^{{\langle n \rangle}}(x) }{\left(u_n^2(x)+\frac{1}{n}\right)^{\frac{\gamma}{2}}} \leq 0.
\end{align*}
Then, by Lemma~\ref{lem:Katoidentity}
\begin{equation}\label{eq:Puplus}
\begin{split}
    &\int_\Omega \A\nabla (u_m - u_n)_+ \cdot \nabla \phi + (\Div \A \cdot \nabla (u_m - u_n)_+)\phi\\
    &\leq \int_{\{u_m>u_n\}}\left( \frac{f^{{\langle m \rangle}}(x) }{\left(u_m^2(x)+\frac{1}{m}\right)^{\frac{\gamma}{2}}} - \frac{f^{{\langle n \rangle}}(x) }{\left(u_n^2(x)+\frac{1}{n}\right)^{\frac{\gamma}{2}}} \right) \phi \leq 0
    \end{split}
\end{equation}
holds for every $\phi\in H_0^1(\Omega)$ such that $\phi\geq 0$. 
  The inequality \eqref{eq:Puplus} states that $-P(u_m-u_n)_+ \leq 0$ in a weak sense. We are in a position to use the maximum principle for weak (super-)solutions to divergence form equations, as $P$ is of the form
\begin{align*}
    Pu = \Div (\A \nabla u) - \Div \A\cdot  \nabla u,
\end{align*}
so the assumptions of \cite[Theorem~8.1]{MR1814364} hold. Thus, 
$$\sup\limits_\Omega(u_m-u_n)_+ \leq \sup\limits_{\partial \Omega}(u_m-u_n)_+ = 0,$$ which yields $u_m\leq u_n$.

\smallskip
\noindent
\subsubsection{Local boundedness of the approximate solutions from below}

We now check that the sequence $(u_n)$ is locally bounded from below i.e.~that for every $\omega\subset\subset \Omega$ there exists a constant $c_{\omega}>0$ such that 
$$\inf\{ u_n(x): x\in \omega,\ n\in \mathbb{N}\} \ge c_\omega.$$

Indeed, since $(u_n(x))$ is nondecreasing, for every $\omega\subset\subset \Omega$ and $x\in \omega$ we have
\begin{equation}\label{porownanie}
 u_n(x)\ge u_1(x)  \ge {\rm min}_\omega u_1=: c_\omega >0.
\end{equation}
The strict positivity of $c_\omega$ follows from part \ref{lemm:KPR_assumption_verification_iii} of Lemma~\ref{lemm:KPR_assumption_verification}.

\smallskip
\noindent

\subsubsection{Boundedness of the compositions of the approximate solutions in $H^1_0(\Omega)$}~\\ \label{sec:boundednesscompositions}
We now prove that the functions $w_n:= u_{n}^{\frac{\gamma +1}{2}}$ are bounded in $H^1(\Omega)$. 

Indeed, by Lemma~\ref{lemm:KPR_assumption_verification} we have $u_{n}^{\gamma +1}\in W^{2,1}(\Omega)$, so we are in a position to use Lemma~\ref{lemm:KPR_estimate} to get
\begin{equation}\label{tomasdalimil}
\int_\Omega \|\nabla u_{n}\|_\A^2 u_n^{\gamma -1} \le C
\int_\Omega -Pu_{n} u_{n}^\gamma =C\int_\Omega  f^{{\langle n \rangle}} \frac{u_{n}^\gamma }{\left(u_{n}^2 +\frac{1}{n}\right)^{\frac{\gamma }{2}}}\le C \int_\Omega  f  < \infty,
\end{equation}
where the second step of the calculation holds because $u_n$ is a strong solution, i.e.~we have equality almost everywhere. Furthermore, by Lemma~\ref{lemm:KPR_assumption_verification} we have $\lVert \nabla w_n \rVert^2 = (\frac{\gamma+1}{2})^2u_n^{\gamma-1} \lVert \nabla u_n \rVert^2$ pointwise because $u_n\in C^1 (\overline{\Omega})$. By this and the Poincar\'e inequality, $w_n$ are bounded in $H^1_0(\Omega)$.

\smallskip
\noindent
\subsubsection{Convergence of the compositions}~\\ \label{sec:convergencecompositions} 
We deduce that there exists $w\in H^1_0(\Omega)$ such that
for any $q\in \left[1,\frac{2d}{d-2}\right)$ (for $d=2$ we put $\frac{2d}{2-2} :=\infty$) we have
\begin{equation}\label{slabezbieznosci}
w_{n}\rightarrow w \ {\rm in}\ L^{q}(\Omega)\ {\rm and}\
\nabla w_{n}\rightharpoonup \nabla w \ \hbox{\rm weakly in}\ L^2(\Omega), \ \ {\rm as} \ n\to\infty .
\end{equation}
For this, we first note that there is a measurable almost everywhere finite function $w$ such that we have $w_n \nearrow w$ everywhere in $\Omega$, since the sequence $(w_n)$ is pointwise increasing and bounded in $L^2(\Omega)$. 
Furthermore, by Section~\ref{sec:boundednesscompositions} and the compact Sobolev embeddings \cite[Section~5.7]{MR1625845},  
any subsequence of $(w_{n})$ has a subsequence converging in $L^q(\Omega)$, whenever $q\in [1,\frac{2d}{d-2})$. Since $(w_n)$ converges to $w$ almost everywhere, all these converging subsequences must converge to the same limit $w$ in $L^q(\Omega)$. This implies that the whole sequence converges to $w$ in $L^q(\Omega)$. Indeed, if it was not true, then we would find a subsequence converging in $L^q(\Omega)$ to some $\widetilde{w}\in L^q(\Omega)$ not a.e. equal to $w$, which contradicts the almost everywhere convergence.

A similar argument via contradiction shows that $(w_n)$ must also converge weakly to $w$ in $H^1(\Omega)$.

\smallskip
\noindent

\subsubsection{Convergence of the approximating sequence and regularity}~\\
\label{sec:step6} We will show that for $u:=w^{\frac{2}{\gamma+1}}$,
\begin{align}\label{slabezbieznosci11}
    \textnormal{if}\ \gamma\in [1,\infty):&\quad u_{n}\rightarrow u  \text{ in } L^2(\Omega) \text{ and } \nabla u_{n}\rightharpoonup \nabla u \ \hbox{\rm weakly in}\ L^2_{loc}(\Omega),\\ \label{weak2glob}
    \text{if } \gamma = 1:&\quad \nabla u_{n}\rightharpoonup \nabla u \ \hbox{\rm weakly in}\ L^2(\Omega), \\ \label{weak12}
   \text{if } \gamma \in (1, 2):&\quad \nabla u_{n}\rightharpoonup \nabla u \ \hbox{\rm weakly in}\ L^q(\Omega) \text{ for } q \in \left ( 1, \tfrac{2}{\gamma} \right ), \\ 
    \label{eq:compact01}
    \textnormal{if}\ \gamma \in (0,1):&\quad u_n \to u\ \textnormal{in}\ L^{q_1}(\Omega)\ \textnormal{and}\ \nabla u_n \rightharpoonup \nabla u\ \textnormal{weakly in}\ L^{q_2}(\Omega),
\end{align}
as $n\to \infty$, for all $q_1\in [1,d(\gamma+1)/(d-2))$ and $q_2\in (1,d(\gamma+1)/(d-(1-\gamma))]$.

\textbf{\eqref{slabezbieznosci11} and \eqref{weak2glob}:}
We first show that $u\in L^2(\Omega)$ and $u_n\to u$ in $L^2(\Omega)$. For this, we first note that $u_n\nearrow u$ almost everywhere. The Monotone Convergence Theorem gives 
 $\int_\Omega u_n^2 \to \int_\Omega u^2$, but perhaps that limit is infinite. To explain that the limit is finite, we observe that
\begin{align*}
\int_\Omega u^2 = \lim\limits_{n\to\infty} \int_\Omega u_n^2  = \lim\limits_{n\to\infty} \int_\Omega w_n^{\frac{4}{\gamma+1}}. 
\end{align*}
When $\gamma =1$ we have $u_n=w_n$, so the finiteness of the limit follows from Section~\ref{sec:convergencecompositions} directly. When $\gamma >1$, the H\"older inequality applied with $s=\frac{\gamma +1}{2}\in (1,\infty)$ gives
\begin{align*}
\int_\Omega w_n^{\frac{4}{\gamma +1}} \le \left( \int_\Omega w_n^{\frac{4}{\gamma +1}\cdot s}\right)^{\frac{1}{s}}|\Omega|^{1-\frac{1}{s}} = \left( \int_\Omega w_n^2\right)^{\frac{2}{\gamma +1}}|\Omega|^{\frac{\gamma -1}{\gamma +1}}\stackrel{n\to\infty}{\rightarrow}  \left( \int_\Omega w^2\right)^{\frac{2}{\gamma +1}}|\Omega|^{\frac{\gamma -1}{\gamma +1}},
\end{align*}
where the last convergence also follows from Section~\ref{sec:convergencecompositions}. We have shown that $\int_\Omega u_n^2 \nearrow \int_\Omega u^2<\infty$ and $u_n \nearrow u$ almost everywhere, which implies that $u_n\to u$ in $L^2(\Omega)$ by the Dominated Convergence Theorem.

We will show that the sequence $(u_n)$ is bounded in $H^1_{loc}(\Omega)$. Indeed, as $u_n$ are strictly positive in $\Omega$, we have
\begin{align}\label{eq:gradientu}
\nabla u_n =\nabla (w_n^{\frac{2}{\gamma +1}}) = \frac{2}{\gamma +1} w_n^{\frac{1-\gamma}{\gamma +1}}\nabla w_n.
\end{align}
Since the sequence $(w_n^{\frac{1-\gamma}{\gamma +1}})$ is locally uniformly bounded in $\Omega$, we deduce that
the sequence $(\nabla u_n)$ is bounded in $L^2(\omega)$ for all $\omega \subset\subset \Omega$. Putting this together with the boundedness of $(u_n)$ in $L^2(\Omega)$ established above, we obtain that $(u_n)$ is bounded in $H^1(\omega)$. As before, by using the compactness argument
we get that every subsequence has a weakly convergent subsequence in $H^1(\omega)$. At the same time, the same argument as in \eqref{eq:gradientu} shows that $u \in H^1(\omega)$ and we also have $u_n\rightarrow u$ in $L^2(\omega)$, whence it follows that 
\begin{equation}\label{slabazbloc}
\nabla u_n\rightharpoonup \nabla u\ \ \hbox{weakly in}\ \  L^2(\omega),
\end{equation}
which implies \eqref{slabezbieznosci}. 

Note that for $\gamma=1$ the convergence is global in $\Omega$, i.e.~$u \in H^1(\Omega)$ and 
\begin{equation*}
\nabla u_n\rightharpoonup \nabla u\ \ \hbox{weakly in}\ \  L^2(\Omega).
\end{equation*}

\textbf{\eqref{weak12}:} For $\gamma > 1$, we note that Lemma~\ref{lemm:KPR_assumption_verification} shows that $u_n \gtrsim \delta_{\Omega}$, and since the sequence $u_n$ is pointwise increasing, the comparability constant does not depend on $n$. Hence, $w_n^{\frac{1-\gamma}{\gamma +1}} = u_n^{\frac{1-\gamma}{2}} \lesssim \delta_{\Omega}(x)^{\frac{1-\gamma}{2}}$. Since $\Omega$ is Lipschitz, $\delta_{\Omega}(x)^{\alpha} \in L^1(\Omega)$ for $\alpha > -1$. Thus, applying the H\"older inequality in \eqref{eq:gradientu}, we get that for $q < \frac{2}{\gamma} < 2$
\begin{equation*}
\begin{split}
    \int_{\Omega} \lVert \nabla u_n \rVert^q &= \left ( \frac{2}{\gamma +1} \right )^q \int_{\Omega} \lVert w_n^{\frac{1-\gamma}{\gamma +1}}\nabla w_n \rVert^q \\
    &\leq \left ( \frac{2}{\gamma +1} \right )^q \left( \int_{\Omega} \lVert \nabla w_n \rVert^2  \right)^{\frac{q}{2}} \left( \int_{\Omega} w_n^{\frac{1-\gamma}{\gamma +1}\frac{2q}{2-q}}  \right)^{\frac{2-q}{2}} \\
    &\lesssim \left ( \frac{2}{\gamma +1} \right )^q \left( \int_{\Omega} \lVert \nabla w_n \rVert^2  \right)^{\frac{q}{2}} \left( \int_{\Omega} \delta_{\Omega}(x)^{\frac{1-\gamma}{2} \frac{2q}{2-q}}  \right)^{\frac{2-q}{2}} 
\end{split}
\end{equation*}
An easy calculation then shows that $\frac{1-\gamma}{2} \frac{2q}{2-q} > -1$ if and only if $q < \frac{2}{\gamma}$. Consequently, $(\nabla u_n)$ is a bounded sequence in $L^q(\Omega)$ for all $q < \frac{2}{\gamma}$. Finally, when $\gamma < 2$ and $q \in (1, \frac{2}{\gamma})$, the reflexivity of $W^{1,q}(\Omega)$ implies that every subsequence of $(\nabla u_n)$ has a weakly convergent subsequence. However, we already know that $u_n \to u$ in $L^{2}(\Omega)$, whence
\begin{equation}\label{slabaqglob}
\nabla u_n\rightharpoonup \nabla u\ \ \hbox{weakly in}\ \  L^q(\Omega).
\end{equation}

\textbf{\eqref{eq:compact01}:} First, note that \eqref{slabezbieznosci} implies that $u_n\to u$ in $L^{q_1}(\Omega)$ for all $q_1\in [1,d(\gamma+1)/(d-2))$; indeed, this follows from the Dominated Convergence Theorem as $u_n \nearrow u \in L^{q_1}(\Omega)$ for $q_1 < d(\gamma+1)/(d-2))$. We will now estimate $\lVert \nabla u_n \rVert^{q_2}$, where for now $q_2\in(0,2)$. By \eqref{eq:gradientu} and the Young inequality:\begin{align*}
    \lVert \nabla u_n \rVert^{q_2} \lesssim w_n^{\frac{2q_2}{2-q_2}\frac{1-\gamma}{\gamma+1}} + \lVert \nabla w_n \rVert^2. 
\end{align*}
By Section~\ref{sec:boundednesscompositions} and 
the Sobolev inequality applied to $w_n\in H^{1}(\Omega)$, the right-hand side above has uniformly bounded integrals, provided that
\begin{align*}
    \frac{2q_2}{2-q_2}\frac{1-\gamma}{\gamma+1} \leq \frac {2d}{d-2} \quad &\iff \quad q_2\leq\frac{d(1+\gamma)}{d-1+\gamma},\qquad d>2,\\
    \frac{2q_2}{2-q_2}\frac{1-\gamma}{\gamma+1} <\infty \quad &\iff \quad q_2 <2,\qquad\qquad\ \ \quad d=2.
\end{align*}
Note that $1 < \frac{d(1+\gamma)}{d-1+\gamma} < 2$. Therefore, by the weak compactness of closed balls in $L^{q_2}(\Omega)$, $q_2\in (1,d(\gamma+1)/(d-(1-\gamma))]$, and the arguments similar to those in Section~\ref{sec:convergencecompositions}, we get \eqref{eq:compact01}. Note that the convergence of $u_n$ in the range of $L^{q_1}(\Omega)$ given in \eqref{eq:compact01} also follows from the Rellich--Kondrachov theorem as $(d(\gamma+1)/(d-(1-\gamma)))^* = d(\gamma+1)/(d-2)$, where $q^* = dq/(d-q)$ is the critical Sobolev exponent.

\bigskip
\subsubsection{The limit is a weak solution}~\\
We now check that $u$ is a weak solution to
$$
-Pu = \frac{f}{u^\gamma}.
$$

Since $u_n \in W^{2,p}(\Omega)$ for all $p\in (1,\infty)$, by integrating the strong formulation against a test function $\phi\in C_c^\infty(\Omega)$ and using the integration by parts formula \cite[Theorem~1.5.3.1]{MR775683}, we get
\begin{align*}
     \int_\Omega  \frac{f^{{\langle n \rangle}} }{\left(u_{n}^2 +\frac{1}{n}\right)^{\frac{\gamma }{2}}}\phi = \int_\Omega -Pu_{n} \phi = \int_\Omega  ({\A}\nabla u_n)\cdot  \nabla \phi +\int_\Omega (\Div{\A} \cdot \nabla u_n ) \phi =: \textit{I}_n+\textit{II}_n.
\end{align*}
 Recall that the expression on the right-hand side is $\langle -Pu_n, \phi \rangle$ in the sense of \eqref{dist-special}.  

Let $\omega\subset\subset\Omega$ be an open set containing $\supp \phi$. Since $\A$, $\Div \A$, $\phi$, and $\nabla \phi$ are bounded, using \eqref{slabezbieznosci11} and \eqref{eq:compact01} we find that
\begin{align*}
\textit{I}_n = \int_\omega (\A \nabla u_n)\cdot \nabla \phi &\mathop{\longrightarrow}\limits_{n\to\infty} \int_\omega   (\A \nabla u) \cdot \nabla \phi= \int_\Omega  (\A \nabla u)\cdot \nabla \phi, \\
\textit{II}_n = \int_\omega (\nabla u_n\cdot \Div \A) \phi &\mathop{\longrightarrow}\limits_{n\to\infty} \int_\omega (\nabla u\cdot \Div \A) \phi = \int_\Omega (\nabla u\cdot \Div \A) \phi.
\end{align*}
As a consequence, using the notation of \eqref{dist-special} we get
\begin{align*}
&\langle -Pu_n, \phi \rangle \mathop{\longrightarrow}\limits_{n\to\infty} \langle -Pu, \phi \rangle. 
\end{align*}
On the other hand, since $u_n,u\geq c_\omega>0$ in $\omega$ and $f^{{\langle n \rangle}}\leq f\in L^1(\Omega)$, by the Dominated Convergence Theorem we obtain
$$
\int_\omega  \frac{f^{{\langle n \rangle}}(x)}{\left(u_n^2(x) +\frac{1}{n}\right)^{\frac{\gamma}{2}}} \phi(x)\, dx \mathop{\longrightarrow}\limits_{n\to\infty} \int_\omega  \frac{f(x)}{u^\gamma(x)} \phi(x)\, dx. 
$$
Therefore
$$
\langle -Pu, \phi \rangle = \int_\omega  \frac{f(x)}{u^\gamma(x)} \phi(x)\, dx,$$
which proves that $u$ is a weak solution.
\subsubsection{Trace and regularity of $u$} The Sobolev regularity properties \ref{item:gammageq2}--\ref{item:gammaleq1} follow from the estimates \eqref{slabezbieznosci11}--\eqref{eq:compact01} in Section~\ref{sec:step6}. Thus, to conclude the proof it remains to verify that the trace condition \eqref{eq:Dirichletcond} is satisfied. We first show that it holds for functions in $W^{1,1}_0(\Omega)$. This fact seems to be folklore, but we could not locate an explicit proof, so we give the details for the convenience of the reader. If $v\in W^{1,1}_0(\Omega)$, then by the discussion at the beginning of Section~\ref{sec:lemmas} we may assume that $u=0$ everywhere on $\partial \Omega$. Furthermore, the following is true:
\begin{align}\label{eq:tracechain}
    \lim\limits_{\epsilon\to 0}\frac 1 \epsilon \int_{\Omega_\epsilon} |v| =\lim\limits_{\epsilon\to 0}\frac 1 \epsilon \int_0^\epsilon \int_{\{ x\in \Omega: \delta_\Omega (x) =t \} } |v| \, d\sigma (x)\, dt
\leq  \lim\limits_{\epsilon\to 0}\frac 1 \epsilon \int_0^\epsilon \bigg( \int_{\Omega_t} \|\nabla v\|\, dx  \bigg) dt =0.
\end{align}
The first equality follows from the coarea formula  \cite[Theorem~3.2.12]{MR257325}, as $\delta_\Omega$ is Lipschitz and $\|\nabla \delta_\Omega\|=1$ on $\Omega_\epsilon$ for small $\epsilon$. For the (only) inequality in \eqref{eq:tracechain} we note that since $\partial \Omega$ is $C^{1,1}$, it satisfies the interior ball condition, therefore for $\epsilon$ small enough for every $x\in \Omega_\epsilon$ there is exactly one $\Pi(x)\in \partial \Omega$ such that $\|x-\Pi(x)\| = \delta_\Omega(x)$. Thus, for $\delta_\Omega(x)=t$ small enough, since $v\equiv 0$ on $\partial \Omega$, we have 
\begin{align}\label{eq:vtracebound}
|v(x)| = |v(x) - v(\Pi(x))| \leq \int_0^{t} \|\nabla v (\Pi(x) + \frac{s}{t} (x-\Pi(x))\|\, ds. 
\end{align}
Let $\vec{n}(x)$ be the inner normal to $\partial \Omega$ at $\Pi(x)$ and note that
\begin{align*}
    \Pi(x) + \frac{s}{t} (x-\Pi(x)) = \Pi(x) + s\vec{n}(x).
\end{align*}
Therefore, since $\partial\Omega$ is $C^{1,1}$, the mappings \begin{align*}
    T\colon \{x\in \Omega: \delta_\Omega(x) =t\}\to \{x\in \Omega: \delta_\Omega(x) =s\},\quad T(x)= \Pi(x) + s\vec{n}(x) \end{align*} and their inverses are uniformly Lipschitz for $s\in [0,t]$. Therefore, by integrating \eqref{eq:vtracebound} over $x$, using integration by substitution and the coarea formula,
\begin{align*}
    \int_{\{ x\in \Omega: \delta_\Omega (x) =t \} } |v(x)| \, d\sigma (x) &\leq \int_0^t \int_{\{ x\in \Omega: \delta_\Omega (x) =t \} }\|\nabla v (\Pi(x) + \frac{s}{t}(x-\Pi(x))\|\, d\sigma (x) \, ds\\
    &\approx \int_0^t \int_{\{x\in \Omega: \delta_\Omega(x) = s\}} \|\nabla v(x)\|\, d\sigma(x)\, ds
    = \int_{\Omega_t} \|\nabla v\|\, dx,
\end{align*}
which implies the inequality in \eqref{eq:tracechain}. The last equality in \eqref{eq:tracechain} holds because $\|\nabla u\|\in L^1(\Omega)$, so $\int_{\Omega_\epsilon} \|\nabla v\| \to 0$ as $\epsilon\to 0$. Therefore \eqref{eq:Dirichletcond} holds for functions in $W^{1,1}_0(\Omega)$.

Then, for $\gamma\in (0,1]$ the solution $u$ obtained above satisfies the trace condition, as $u\in W^{1,q}_0(\Omega)$ for some $q>1$. For $\gamma>1$ we have $u^{\frac{\gamma+1}2} \in H^1_0(\Omega)$, so $u^{\frac{\gamma+1}2}$ has trace \eqref{eq:Dirichletcond} equal to 0, but since $\frac{\gamma+1}2>1$, this implies that $u$ also has trace 0 by the Jensen inequality.

Finally, since $u$ was obtained as an increasing sequence of $u_n$ and $u_1\gtrsim \delta_\Omega$, we find that $u\gtrsim \delta_\Omega$.

This ends the proof of Theorem~\ref{maintheorem}. \qed

\section{Proof of Theorem \ref{uniqregu}. The uniqueness result}\label{sec:uniquenessproof}
\subsection{Some facts for linear equations}
We will repeatedly consider solutions to auxiliary linear equations involving $P$ and $P^*$. As $P$ was covered in the previous section, here we give similar results for the equation $-P^* v = F$. Recall that the adjoint operator $P^*$ has the divergence form:
\begin{align*}
    P^*v = \Div(\A \nabla v + \Div (\A) v).
\end{align*}
Assuming $\A\in C^2(\overline{\Omega})$, we may compute a reformulation of $P^*$ in nondivergence form:
\begin{align*}
        P^*v = \Div(\A \nabla v) + \Div \A \cdot \nabla v + (\Div(\Div \A))v,
\end{align*}
 whence  
\begin{align} \label{eq:P*non-div-P}
    P^*v = Pv + 2\Div \A \cdot \nabla v + (\Div(\Div \A))v,
\end{align}
with the equalities understood in the a.e.~sense for $v\in W^{2,1}_{loc}(\Omega)$ and $P$ as in \eqref{eq:P}.
Hence, we have a possibly signed zero-order term, which means that we cannot directly apply the strong solution framework of \cite[Chapter~9]{MR1814364}. Nevertheless, it is possible to obtain similar regularity results by considering the weak solutions and bootstrapping the regularity as we demonstrate below. We say that $v\in H^1_0(\Omega)$ is a weak solution to
\begin{align}\label{eq:Pstarv}
\begin{cases}
    -P^*v = F\quad &{\rm in}\ \Omega,\\
    v\equiv 0\quad &{\rm on}\ \partial\Omega,
\end{cases}
\end{align}
if the following equality holds
\begin{align*}
    \int_\Omega (\A \nabla v \cdot \nabla \phi + v\Div \A  \cdot \nabla \phi) = \int_\Omega F\phi,\quad \phi\in H^1_0(\Omega).
\end{align*}
\begin{lemma}[existence and regularity of solutions to \eqref{eq:Pstarv}]\label{lem:linear}
    Assume that $\Omega \in C^{0,1}$, $\A\in C^1(\overline{\Omega})$ satisfies \eqref{ellipticity}, and $F\in L^2(\Omega)$. Then,
    \begin{enumerate} 
        \item \label{lem:linear_i} There exists exactly one weak solution $v\in H^1_0(\Omega)$ to \eqref{eq:Pstarv}.
         \item  \label{lem:linear_ii} If in addition $\Omega \in C^2$, $\A\in C^2(\overline{\Omega})$, and $F\in L^\infty(\Omega)$, then
        \begin{enumerate}[label={\bf \textup{(\alph*)}}]
            \item\label{lem:linear_iia} $v\in C^1(\overline{\Omega})\cap W^{2,p}(\Omega)$ for every $p\in (1,\infty)$ and \eqref{eq:Pstarv} holds almost everywhere in $\Omega$.
            \item\label{lem:linear_iib} We have $v\lesssim \delta_\Omega$ with the comparability constant depending only on $d, c_\A,C_\A,\Omega$, $\|A\|_{C^1(\overline{\Omega})}$, and $\|F\|_\infty$.
            \item\label{lem:linear_iic} If $F\geq 0$ and $F>0$ on a set of positive Lebesgue measure, then $v\gtrsim \delta_\Omega$.
        \end{enumerate}
    \end{enumerate}
\end{lemma}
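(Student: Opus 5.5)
For part \ref{lem:linear_i} I would apply the Fredholm alternative to the bilinear form $B(v,\phi)=\int_\Omega \A\nabla v\cdot\nabla\phi + v\,\Div\A\cdot\nabla\phi$ on $H^1_0(\Omega)$. By \eqref{ellipticity} and the boundedness of $\Div\A$, $B$ satisfies a G\aa rding inequality, $B(v,v)\ge \tfrac{c_\A}{2}\|\nabla v\|_2^2 - C\|v\|_2^2$. The Fredholm alternative (e.g.\ \cite[Theorem~8.6]{MR1814364}, or Lax--Milgram for $B+\lambda$ combined with Rellich compactness) then reduces existence and uniqueness to showing that the homogeneous adjoint-type problem has only the trivial solution. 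The form $B$ is the divergence-form operator $L v=\Div(\A\nabla v+v\Div\A)$, which has a lower-order term of the type $\Div(bv)$ with $b=\Div\A$ and no zero-order term. Its formal adjoint is $\Div(\A\nabla w)-\Div\A\cdot\nabla w=Pw$, which has no zeroth-order coefficient. The relevant sign condition of \cite[(8.8)]{MR1814364}, $\int_\Omega (d\phi - b\cdot\nabla\phi)\le0$ for $\phi\ge0$, holds trivially here because there is no $c$ and no $d$ term. Hence \cite[Theorem~8.3]{MR1814364} gives unique solvability in $H^1_0$ directly. The same conclusion also follows from the weak maximum principle \cite[Theorem~8.1]{MR1814364} applied to the adjoint $P$: a kernel element of $B$ would give a nontrivial kernel element of the adjoint problem $-Pw=0$ in $H^1_0$, which the maximum principle forbids.

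For part \ref{lem:linear_ii}\ref{lem:linear_iia} I would bootstrap. Since $\A\in C^2$, I rewrite the equation using \eqref{eq:P*non-div-P} as $-Pv = F + 2\Div\A\cdot\nabla v + (\Div\Div\A)v =: \widetilde F$. First, the global $H^2$ theory for divergence-form equations with $C^1$ leading coefficients on $C^2$ (even $C^{1,1}$) domains \cite[Theorem~8.12]{MR1814364} gives $v\in H^2(\Omega)$, so $\nabla v\in L^{2^*}$ and hence $\widetilde F\in L^{p_1}$ for some $p_1>2$. At this point $v$ is a $W^{2,2}\cap W^{1,2}_0$ strong solution of $-Pv=\widetilde F$. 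By the uniqueness part of \cite[Theorem~9.15]{MR1814364} together with \cite[Lemma~9.16]{MR1814364}, $v\in W^{2,p_1}$. Iterating with Sobolev embedding, finitely many steps give $\nabla v\in L^\infty$, so $\widetilde F\in L^\infty$ and therefore $v\in W^{2,p}$ for all $p<\infty$. Morrey's embedding then gives $v\in C^1(\overline\Omega)$, and the equation holds a.e.

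For \ref{lem:linear_iib} I would reuse the argument from Lemma~\ref{lemm:KPR_assumption_verification}\ref{lemm:KPR_assumption_verification_iv}. The Calder\'on--Zygmund estimate \eqref{cald-zyg} for $-Pv=\widetilde F$, with $p>d$, together with the bound on $\|v\|_{H^1}$ from part \ref{lem:linear_i} fed through the bootstrap, yields $\|\nabla v\|_\infty\le C(\|F\|_\infty)$. Integrating along the normal segment to the nearest boundary point then gives $|v|\lesssim\delta_\Omega$.

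For \ref{lem:linear_iic} I would use the Green representation $v(x)=\int_\Omega G^*_\Omega(x,y)F(y)\,dy$ with $G^*_\Omega(x,y)=G_\Omega(y,x)$. This identity comes from testing against solutions of $-Pw=g$, $g\in C_c^\infty$, given by Lemma~\ref{lemm:KPR_assumption_verification}: integrating by parts (legitimate since both $v$ and $w$ are $W^{2,p}\cap W^{1,p}_0$) gives $\int vg=\int Fw=\int F(y)\int G_\Omega(y,x)g(x)\,dx\,dy$. The two-sided estimates \eqref{eq:Greenest2}--\eqref{eq:Greenest} are symmetric in $x,y$, so the computation from Lemma~\ref{lemm:KPR_assumption_verification}\ref{lemm:KPR_assumption_verification_iv} carries over verbatim and yields $v\gtrsim\delta_\Omega$. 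I expect the main obstacle to be justifying this duality/Green representation carefully, including the a.e.\ identification of $v$ and the measurability needed for Fubini. The bootstrap in \ref{lem:linear_iia} is routine by comparison.
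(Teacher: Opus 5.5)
\textbf{Parts (i), (ii)(a), (ii)(c).} Your proof is correct, and for these three parts it follows the paper's route, except for one wrong sentence in (i).

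With $b=\Div\A$ and $d=0$, condition (8.8) of \cite{MR1814364} reads $\int_\Omega \Div\A\cdot\nabla\phi\ge 0$ for all $\phi\ge 0$. This means $\Div(\Div\A)\le 0$ in $\mathcal{D}'(\Omega)$, which is not assumed. The absence of $c$ and $d$ makes the \emph{adjoint} condition $\int_\Omega(d\phi+c\cdot\nabla\phi)\le 0$ trivial, not (8.8). So \cite[Theorem~8.3]{MR1814364} does not apply ``directly''.

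What closes the argument is your second route: the Fredholm alternative plus the weak maximum principle for the adjoint $P$, which in the form of \cite{MR1814364} has $b=0$, $c=-\Div\A$, $d=0$. That is exactly the remark on \cite[p.~183]{MR1814364} that the paper invokes, so drop the ``directly'' claim and keep this one.

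In (ii)(c), your duality derivation of $v(x)=\int_\Omega G_\Omega(y,x)F(y)\,dy$ from \eqref{green0} is a self-contained substitute for the paper's citation of \cite{MR4056785}. The integrations by parts are legitimate because $v,w\in W^{2,p}(\Omega)\cap W^{1,p}_0(\Omega)$ and $\A\in C^2(\overline{\Omega})$. Fubini is justified by Tonelli, since $G_\Omega\ge 0$.

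\textbf{Part (ii)(b).} Here you take a genuinely different route.

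The paper again uses the Green representation. By the symmetric two-sided bounds \eqref{eq:Greenest2}--\eqref{eq:Greenest} it gets $|v|\lesssim \|F\|_\infty\zeta$, where $-P\zeta=1$, and then $\zeta\approx\delta_\Omega$ by Lemma~\ref{lemm:KPR_assumption_verification}.

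You instead make the bootstrap of (a) quantitative. At each step you use the bounded solution operator from (i), the estimate in \cite[Theorem~8.12]{MR1814364}, Sobolev embedding, and \cite[Lemma~9.17]{MR1814364}, to reach $\|\nabla v\|_\infty\le C\|F\|_\infty$. You then integrate along the segment from $x$ to a nearest boundary point. That segment lies in $B(x,\delta_\Omega(x))\subset\Omega$, so the argument works at every $x\in\Omega$ and needs no uniqueness of the nearest point.

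\begin{itemize}
\item \emph{Gain.} This is more elementary: (b) then rests only on the $W^{2,p}$ theory from (a), with no Green function estimates.
\item \emph{Cost.} Your constant depends on $\|\A\|_{C^2(\overline{\Omega})}$, through $\Div(\Div\A)$ and the $H^2$ theory, and on the norm of the solution operator in (i). It therefore does not depend only on the quantities listed in (b).
\item \emph{Why the cost is harmless.} Part (ii) assumes $\A\in C^2(\overline{\Omega})$ anyway. The later uses of (b), in Lemmas~\ref{lem:veryweak}, \ref{lem:aprioriL1} and \ref{lem:Kato}, only need a bound that is linear in $\|F\|_\infty$ and otherwise uniform in $F$.
\end{itemize}

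The paper's route, for its part, treats (b) and (c) with a single tool.
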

\begin{proof}
     \ref{lem:linear_i} For the existence and uniqueness of the weak solution $v\in H^1_0(\Omega)$ we use \cite[Theorem~8.3]{MR1814364} given for operators of the form
     $$Lv = \Div (\A\nabla v + b v) + c\nabla v + dv.$$ 
    Aside from uniform ellipticity of $\A$ and boundedness of $\A$, $b$, $c$ and $d$, which hold in our case as $b=\Div \A$, $c=0$, and $d=0$, the result requires that
     \begin{align*} 
         \int_\Omega(d\phi - b\cdot D\phi) \geq 0,\quad \phi\geq 0,\ \phi\in C^1_c(\Omega).
     \end{align*}
     However, as explained in the comment before Section~8.3 on \cite[p.~183]{MR1814364}, this condition can be replaced by 
     \begin{align*} 
         \int_\Omega (d\phi + c\cdot D\phi) \leq 0,\quad \phi\geq 0,\ \phi\in C^1_c(\Omega),
     \end{align*}
     which holds trivially for $P^*$. Thus we get the existence and uniqueness of a weak solution $v\in H^1_0(\Omega)$. \medskip

     \noindent \ref{lem:linear_ii} \ref{lem:linear_iia}:
    Assuming that $\Omega \in C^2$ and $\A\in C^2(\overline{\Omega})$, we are in a position to apply \cite[Theorems~8.8 and 8.12]{MR1814364} to find that $v\in W^{2,2}(\Omega)$ and that $v$ satisfies $-P^*v = F$ almost everywhere.

    To further improve the regularity, note first that by the Sobolev inequality we have $v \in W^{1,2d/(d-2)}(\Omega)$ (we only consider $d>2$ in this argument, $d=2$ is simpler). We use \eqref{eq:P*non-div-P} to reformulate $-P^*v = F$ as
    \begin{align}\label{eq:pxiequation}
       -Pv = (\Div(\Div \A)) v + 2\Div \A \cdot \nabla v + F.
    \end{align}
    Then, the right-hand side belongs to $L^{2d/(d-2)}(\Omega)$, therefore by \cite[Theorem~9.15]{MR1814364} we find that $v \in W^{2,2d/(d-2)}(\Omega)$. Let us now denote 
    \begin{align*}
        p_0 &:= 2d/(d-2), \qquad
        p_{n+1} := \begin{cases}
            (p_n)^*= \frac{dp_n}{d-p_n} &\text{ if } p_n < d, \\
            \infty &\text{ if } p_n \geq d,
        \end{cases}
    \end{align*}
    and note that $\frac{p_{n+1}}{p_n} \geq \frac{d}{d - p_0} > 1$, whence $p_n$ is unbounded and so eventually infinite. Using the Sobolev inequality again, we find that the right-hand side of \eqref{eq:pxiequation} belongs to $L^{p_1}(\Omega)$, and then \cite[Theorem~9.15]{MR1814364} implies that $v \in W^{2,p_1}(\Omega)$. By repeatedly using the Sobolev inequality and \cite[Theorem~9.15]{MR1814364} we find that the right-hand side of \eqref{eq:pxiequation}
    belongs to $L^\infty$, 
    therefore $v \in W^{2,p}(\Omega)$ for all $p\in (1,\infty)$. The fact that $u\in C^1(\overline{\Omega})$ follows from the Sobolev embeddings. \medskip
    
    \noindent \ref{lem:linear_ii} \ref{lem:linear_iib}: Since $v\in W^{2,p}(\Omega)$ satisfies $-P^*v = F$ almost everywhere, integration by parts and \cite[Lemma~1.7 and Theorem~1.9]{MR4056785} (see also \cite{MR4301407} for $d=2$) imply that it can be represented in terms of the Green function $G^*_\Omega$ of $P^*$. But since $G^*_\Omega(x,y) = G_\Omega(y,x)$, we have 
    \begin{align*}
        v(x) = \int_\Omega G_\Omega(y,x)F(y)\, dy,\quad x\in \Omega.
    \end{align*}
    By the sharp estimates \eqref{eq:Greenest} and \eqref{eq:Greenest2} we have $G_\Omega(x,y) \approx G_\Omega(y,x)$, therefore 
    \begin{align*}
        |v(x)| \lesssim \int_\Omega G_\Omega(x,y)\|F\|_\infty\ dy = \zeta \|F\|_\infty,
    \end{align*}
    where $\zeta$ solves $-P\zeta = 1$. By Lemma~\ref{lemm:KPR_assumption_verification} we have $\zeta \approx \delta_\Omega$, therefore \textbf{(b)} holds.\medskip
    
    \noindent \ref{lem:linear_ii} \ref{lem:linear_iic}: is proved in the same way as the lower bound in Lemma~\ref{lemm:KPR_assumption_verification} \ref{lemm:KPR_assumption_verification_iv}, we skip the details.
\end{proof}

\subsection{First extension of the class of test functions}\label{sec:firstextension}
We will show that the class of the admissible test functions for the distributional solutions of \eqref{pe} can be extended to the set
\begin{align}\label{eq:firstextension}
    \mathcal{A} = \{\phi\in C^1_c(\Omega)\cap W^{2,1}(\Omega): P^*\phi\in L^\infty(\Omega)\}.
\end{align}
The class will be extended even further, but this step is needed to obtain the weighted integrability of $f/u^\gamma$, which will enable us to consider test functions which vanish at $\partial \Omega$, but are not necessarily compactly supported.

In order to make the first extension, we will use a Friedrichs-type lemma. The following statement might be known, but we were unable to locate a satisfactory formulation in the literature, so we give a proof for the sake of completeness. See e.g.~\cite[Corollary~3.3]{MR1732050} for a related result for the first-order derivatives.

Recall that $\varphi_\epsilon\in C_c^\infty(B_\epsilon)$ is the standard  mollifier as in Section \ref{general-notation}.

\begin{lemma}[Friedrichs-type lemma]\label{lem:Friedrichs}
    Let $Lv = \A \cdot \nabla^{(2)}v + b\cdot\nabla v + cv$ with $\A \in C^1(\Omega)$ and $b,c$ bounded and measurable. Suppose that $v\in C_c(\Omega)\cap W^{1,\infty}(\Omega)\cap W^{2,1}(\Omega)$ and $Lv\in L^\infty(\Omega)$. Then, there exist $C>0$ and $\epsilon_0>0$ such that for all $\epsilon\in (0,\epsilon_0)$
    \begin{align*}
        \|L(v\ast \varphi_\epsilon)\|_\infty \leq  C(\|Lv\|_\infty + \|v\|_{W^{1,\infty}(\Omega)})
    \end{align*}
\end{lemma}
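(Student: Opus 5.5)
The plan is a commutator estimate. Write $v_\epsilon := v\ast\varphi_\epsilon$. Since $v$ has compact support in $\Omega$, there is $\epsilon_0>0$ such that $K_{\epsilon}:=\supp v + \overline{B(0,\epsilon)}$ stays inside a fixed compact set $K\subset\subset\Omega$ for $\epsilon<\epsilon_0$. On $K$, $\A$ is $C^1$ with bounded derivatives. Then
\begin{align*}
L v_\epsilon = (Lv)\ast\varphi_\epsilon + \big[\A\cdot\nabla^{(2)}(v\ast\varphi_\epsilon) - (\A\cdot\nabla^{(2)}v)\ast\varphi_\epsilon\big] + \big[b\cdot\nabla v_\epsilon - (b\cdot \nabla v)\ast\varphi_\epsilon\big] + \big[c v_\epsilon - (cv)\ast\varphi_\epsilon\big].
\end{align*}
The first term is bounded by $\|Lv\|_\infty$. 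The last two brackets are bounded by $2(\|b\|_\infty\|\nabla v\|_\infty + \|c\|_\infty\|v\|_\infty)\lesssim\|v\|_{W^{1,\infty}}$, since convolution with $\varphi_\epsilon$ does not increase sup norms. So it remains to bound the second-order commutator
\begin{align*}
R_\epsilon(x) := \int \big(a_{ij}(x)-a_{ij}(y)\big)\,\partial_{ij}v(y)\,\varphi_\epsilon(x-y)\,dy .
\end{align*}
Here $\partial_{ij}v\in L^1$ only, so it cannot be estimated directly.

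The key step is to integrate by parts in $y$, moving one derivative $\partial_j$ off $v$. This is legitimate because $v\in W^{2,1}$ has compact support and all the other factors are $C^1$ in $y$. The derivative then falls on $(a_{ij}(x)-a_{ij}(y))\varphi_\epsilon(x-y)$, giving
\begin{align*}
R_\epsilon(x) = \int \partial_j a_{ij}(y)\,\partial_i v(y)\,\varphi_\epsilon(x-y)\,dy + \int \big(a_{ij}(x)-a_{ij}(y)\big)\,\partial_i v(y)\,(\partial_j\varphi_\epsilon)(x-y)\,dy .
\end{align*}
The sign conventions need only be tracked loosely, since we only need bounds.

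The first integral is bounded by $\|\nabla\A\|_{L^\infty(K)}\|\nabla v\|_\infty$. For the second, use $|a_{ij}(x)-a_{ij}(y)|\le \|\nabla \A\|_{L^\infty(K)}\,|x-y|\le \|\nabla\A\|_{L^\infty(K)}\,\epsilon$ on the support of $\varphi_\epsilon(x-\cdot)$, which requires convexity of small balls inside $K$. Together with $\|\nabla\varphi_\epsilon\|_{L^1}=\epsilon^{-1}\|\nabla\varphi\|_{L^1}$, the second integral is bounded by $C\|\nabla\A\|_{L^\infty(K)}\|\nabla v\|_\infty$, uniformly in $\epsilon$. Summing all the pieces gives $\|Lv_\epsilon\|_\infty\le C(\|Lv\|_\infty+\|v\|_{W^{1,\infty}(\Omega)})$ for $\epsilon<\epsilon_0$.

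The main obstacle is exactly this second-order commutator. Only first derivatives of $v$ are bounded, and only the combination $Lv$ of second derivatives is, so the Lipschitz continuity of $\A$ must absorb one derivative. The integration by parts above is the device for this, and the $\epsilon$ gained from $|x-y|\le\epsilon$ exactly cancels the $\epsilon^{-1}$ lost on $\nabla\varphi_\epsilon$.

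A minor point: $(Lv)\ast\varphi_\epsilon$ needs $Lv$ to be defined a.e. This holds because $v\in W^{2,1}$, and $Lv$ vanishes outside $\supp v$. The identity $\partial_{ij}v_\epsilon=(\partial_{ij}v)\ast\varphi_\epsilon$ holds since $v\in W^{2,1}$ with compact support.
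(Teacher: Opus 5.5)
Your proposal is correct and follows essentially the same route as the paper: you isolate $(Lv)\ast\varphi_\epsilon$ and the lower-order terms, then integrate the second-order commutator $\int(\A(x)-\A(y))\cdot\nabla^{(2)}v(y)\varphi_\epsilon(x-y)\,dy$ by parts once in $y$. The resulting term with $\nabla\A$ and the term with $(\A(x)-\A(y))\nabla\varphi_\epsilon(x-y)$ are each bounded by $C\|v\|_{W^{1,\infty}(\Omega)}$, because $|x-y|\le\epsilon$ cancels $\|\nabla\varphi_\epsilon\|_{L^1}\approx\epsilon^{-1}$; your bookkeeping (the compact set $K$, the segment $[x,y]$ staying in $K$, and $Lv$ vanishing off $\supp v$) is, if anything, slightly more careful than the paper's.
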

\begin{proof}
    Since $b$ and $c$ are bounded, we have 
    \begin{align*}
        |(b\cdot\nabla v)\ast \varphi_\epsilon + (cv)\ast\varphi_\epsilon| \leq (\|b\|_\infty + \|c\|_\infty)\|v\|_{W^{1,\infty}(\Omega)}.
    \end{align*}
    Therefore it remains to estimate the second-order term. We have by the Dominated Convergence Theorem and integration by parts that
    \begin{equation*}
        \A \cdot (\nabla^{(2)}(v\ast \varphi_\epsilon))(x) = \A(x) \cdot \int_{B(x,\epsilon)} \nabla^{(2)}v(y) \varphi_\epsilon(x-y) \, dy,
    \end{equation*}
    whence    
    \begin{align*}
        |\A \cdot (\nabla^{(2)}(v\ast \varphi_\epsilon))(x)| \leq& |\varphi_\epsilon\ast (\A \cdot \nabla^{(2)}v)(x)| \\ 
        &+ |\A \cdot (\nabla^{(2)}(v\ast\varphi_\epsilon))(x) - \varphi_\epsilon\ast (\A \cdot \nabla^{(2)}v)(x)|\\
        =&\|Lv\|_\infty + (\|b\|_\infty + \|c\|_\infty)\|v\|_{W^{1,\infty}(\Omega)} \\
        &+ \bigg|\int_{B(x,\epsilon)} (\A(x) - \A(y)) \cdot \nabla^{(2)}v(y) \varphi_\epsilon(x-y)\, dy\bigg|.
    \end{align*}
      We integrate by parts in the last integral and use the triangle inequality, getting
    \begin{align*}
        &\bigg|\int_{B(x,\epsilon)} (\A(x) - \A(y))\cdot\nabla^{(2)}v(y) \varphi_\epsilon(x-y)\, dy\bigg| \\ &\leq\int_{B(x,\epsilon)} |(\A(x) - \A (y))\nabla v(y) \cdot \nabla \varphi_\epsilon(x-y)|\, dy 
        + \int_{B(x,\epsilon)} \|\A\|_{C^1(\omega)}|\nabla v(y)|\varphi_\epsilon(x-y)\, dy\\
        &\leq \|v\|_{W^{1,\infty}(\Omega)}\|\A\|_{C^1(\omega)}\bigg(\int_{B(0,\epsilon)} \|x-y\|\epsilon^{-d-1} \nabla \varphi_1(y/\epsilon)\, dy + 1\bigg) 
        \leq C\|v\|_{W^{1,\infty}(\Omega)},
    \end{align*}
    where $\omega$ is a relatively compact subdomain of $\Omega$ depending only on $\epsilon_0$ and $\supp v$. This ends the proof.
\end{proof}

We may now move on to the actual extension.

\begin{lemma}[first extension of the class of test functions]\label{lem:firstextension}
    Let $\A\in C^2(\Omega)$ and assume that $u\in L^1(\Omega)$ and $F\in L^1_{loc}(\Omega)$ satisfy $-Pu = F$ in the distributional sense, that is,
    \begin{align*}
        -\int_\Omega uP^* \phi = \int_\Omega F\phi,\quad \phi \in C^\infty_c(\Omega).
    \end{align*}
    Then,
    \begin{align*}
        -\int_\Omega uP^* \phi = \int_\Omega F\phi,\quad \phi\in \mathcal{A},
    \end{align*}
    where $\mathcal{A}$ is as in \eqref{eq:firstextension}. 
\end{lemma}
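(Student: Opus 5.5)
Given $\phi\in\mathcal{A}$, we mollify it. Set $\phi_\epsilon := \phi\ast\varphi_\epsilon$. For $\epsilon<\epsilon_0$ small, where $\epsilon_0$ is smaller than $\tfrac12\operatorname{dist}(\supp\phi,\partial\Omega)$, we have $\phi_\epsilon\in C_c^\infty(\Omega)$. All supports then lie in a fixed compact set $K:=\{x:\operatorname{dist}(x,\supp\phi)\le\epsilon_0\}\subset\subset\Omega$. The distributional identity gives
\begin{align*}
-\int_\Omega u P^*\phi_\epsilon = \int_\Omega F\phi_\epsilon
\end{align*}
for each such $\epsilon$, and we pass to the limit $\epsilon\to0$ on both sides.

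The right-hand side is easy. Since $\phi\in C^1_c(\Omega)$, we have $\phi_\epsilon\to\phi$ uniformly, and all these functions are supported in $K$. Because $F\in L^1(K)$, it follows that $\int F\phi_\epsilon\to\int F\phi$.

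For the left-hand side we use the nondivergence representation \eqref{eq:P*non-div-P}:
\begin{align*}
P^*v = \A\cdot\nabla^{(2)}v + 2\Div\A\cdot\nabla v + (\Div(\Div\A))v .
\end{align*}
This has the form of the operator $L$ in Lemma~\ref{lem:Friedrichs}, with $\A\in C^1$ and with $b=2\Div\A$ and $c=\Div(\Div\A)$ bounded on $K$, since $\A\in C^2(\Omega)$. To make them globally bounded we may modify $b$ and $c$ outside $K$, which does not affect any of the functions involved. We also have $\phi\in C_c(\Omega)\cap W^{1,\infty}(\Omega)\cap W^{2,1}(\Omega)$ because $\phi\in C^1_c$, and $P^*\phi\in L^\infty$ by assumption. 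Lemma~\ref{lem:Friedrichs} therefore yields the uniform bound
\begin{align*}
\|P^*\phi_\epsilon\|_\infty\le C\bigl(\|P^*\phi\|_\infty+\|\phi\|_{W^{1,\infty}}\bigr), \qquad \epsilon<\epsilon_0 .
\end{align*}
Next, $\phi_\epsilon\to\phi$ in $W^{2,1}(\Omega)$, so $P^*\phi_\epsilon\to P^*\phi$ in $L^1(K)$. Passing to a subsequence, the convergence also holds almost everywhere. Together with the uniform $L^\infty$ bound and the fact that the supports lie in $K$, the Dominated Convergence Theorem with dominating function $C|u|\mathbf 1_K\in L^1$ gives $\int uP^*\phi_{\epsilon_k}\to\int uP^*\phi$. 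This proves the identity for $\phi$.

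The main obstacle is the left-hand side. Here $u$ is only in $L^1$, so $L^1$ convergence of $P^*\phi_\epsilon$ alone is not enough; we need a uniform $L^\infty$ bound on $P^*\phi_\epsilon$. This is exactly what the Friedrichs-type Lemma~\ref{lem:Friedrichs} supplies, by controlling the commutator between mollification and the variable-coefficient second-order term. The remaining points are routine: checking that the hypotheses of Lemma~\ref{lem:Friedrichs} hold with coefficients bounded near $\supp\phi$, and using the a.e.\ convergence along a subsequence.
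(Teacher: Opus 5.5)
Your proposal is correct and follows essentially the same route as the paper. You mollify $\phi$ and pass to the limit on the right-hand side using compact support and uniform convergence. On the left-hand side you combine almost everywhere convergence of $P^*(\phi\ast\varphi_\epsilon)$ with the uniform $L^\infty$ bound from Lemma~\ref{lem:Friedrichs} and the Dominated Convergence Theorem. The paper leaves two details implicit that you supply: rewriting $P^*$ in the nondivergence form \eqref{eq:P*non-div-P}, and modifying the lower-order coefficients outside a compact set, since $\A$ is only assumed to be in $C^2(\Omega)$.
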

\begin{proof}
    Take $\phi\in \mathcal{A}$. Then, since $\phi\in C^1_c(\Omega)$, for sufficiently small $\epsilon>0$ we have $\phi\ast \varphi_\epsilon \in C^\infty_c(\Omega)$. By the Dominated Convergence Theorem, $\int F\phi\ast \varphi_\epsilon \to \int F\phi$. Furthermore, as $\phi\in W^{2,1}_{loc}(\Omega)$, we have $P^*(\phi\ast\varphi_\epsilon) \to P^*\phi$ almost everywhere as $\epsilon\to 0$. By the definition of $\mathcal{A}$ we have $P^*\phi\in L^\infty(\Omega)$, therefore the statement of the lemma follows from Lemma~\ref{lem:Friedrichs} and the Dominated Convergence Theorem.
\end{proof}
\subsection{Weighted integrability of $f/u^\gamma$}\label{sec:weightedintegrability}

For the purpose of establishing the weighted integrability of $f/u^\gamma$, we will use the special test function $\xi$, a strong solution to
\begin{align*}
\begin{cases}
    -P^\ast \xi=1\quad &{\rm in}\ \Omega,\\
    \xi = 0\quad &{\rm on}\ \partial \Omega.
    \end{cases}
\end{align*}
The existence and uniqueness of $\xi$ is given by Lemma~\ref{lem:linear}, whence we also have $\xi\in C^1(\overline{\Omega})\cap W^{2,p}(\Omega)\cap W^{1,p}_0(\Omega)$ for all $p\in (1,\infty)$, $\xi>0$ in $\Omega$, and $\xi \approx \delta_{\Omega}$.  

Since $\xi$ cannot be used directly as a test function, we will approximate it by compactly supported functions, as in the following result.

\begin{lemma}[approximation of $\xi$]\label{lem:specialxi}
    Assume that $\Omega \in C^{2}$ and that $\A\in C^{2}(\overline{\Omega})$ satisfies \eqref{ellipticity}. Then there exists an increasing sequence of positive functions $\xi_n\nearrow \xi$ such that $\xi_n \in \mathcal{A}$ (see \eqref{eq:firstextension}) 
    and for almost every $x\in \Omega$
    $$
        -P^*\xi_n (x) \le D_{\A},
    $$
    where  $D_{\A}=  1+ \| (\Div(\Div\A ))_+\|_{\infty}\|\xi\|_\infty$.
\end{lemma}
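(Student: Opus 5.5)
The natural construction is to compose $\xi$ with a family of convex-type cutoffs, mirroring the Kato argument of Lemma~\ref{lem:Katoidentity}. Take $\xi_n := \Psi_n(\xi)$ with
\[
\Psi_n(s) = \Big(s - \tfrac1n\Big)_+ ,
\]
smoothed to be $C^2$: choose $\Psi_n\in C^2(\R)$ convex and nondecreasing with $\Psi_n\equiv 0$ on $(-\infty,1/n]$, $0\le \Psi_n'\le 1$, $\Psi_n(s)\le s$, and $\Psi_n(s)\nearrow s$ as $n\to\infty$. For example, $\Psi_n(s)=\Phi_{1/n}(s-1/n)$ with $\Phi_\epsilon$ as in the proof of Lemma~\ref{lem:Katoidentity}; then $\Phi_{\epsilon}(x)=((x^3+\epsilon^3)^{1/3}-\epsilon)_+$ is increasing in $x$ and decreasing in $\epsilon$, so monotonicity in $n$ holds. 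Since $\xi\approx\delta_\Omega$ by Lemma~\ref{lem:linear}, the set $\{\xi>1/n\}$ is relatively compact in $\Omega$. Hence $\xi_n\in C^1_c(\Omega)$. Moreover $\xi_n\in W^{2,1}(\Omega)$ by the chain rule, since $\xi\in W^{2,p}\cap C^1(\overline\Omega)$, and $\xi_n \nearrow \xi$. The functions $\xi_n$ are nonnegative; "positive" has to be read as nonnegative and nontrivial.

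The key computation uses \eqref{eq:P*non-div-P}: $P^*v = Pv + 2\Div\A\cdot\nabla v + (\Div(\Div\A))v$ a.e. For $v=\Psi_n(\xi)$ the chain rule gives $P\Psi_n(\xi) = \Psi_n'(\xi)P\xi + \Psi_n''(\xi)\,\A\nabla\xi\cdot\nabla\xi$ and $\nabla\Psi_n(\xi)=\Psi_n'(\xi)\nabla\xi$. Therefore
\[
-P^*\xi_n = \Psi_n'(\xi)\bigl(-P\xi - 2\Div\A\cdot\nabla\xi\bigr) - \Psi_n''(\xi)\|\nabla\xi\|_\A^2 - (\Div(\Div\A))\Psi_n(\xi).
\]
Using $-P^*\xi = 1$, we have $-P\xi-2\Div\A\cdot\nabla\xi = 1 + (\Div(\Div\A))\xi$. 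Hence
\[
-P^*\xi_n = \Psi_n'(\xi) + (\Div(\Div\A))\bigl(\Psi_n'(\xi)\xi - \Psi_n(\xi)\bigr) - \Psi_n''(\xi)\|\nabla\xi\|_\A^2 .
\]
The third term is $\le 0$ by convexity and ellipticity, and the first is $\le 1$. For the middle term, note $0\le \Psi_n'(s)s-\Psi_n(s)\le s$. The lower bound follows from convexity and $\Psi_n(0)=0$, and the upper bound from $\Psi_n'\le 1$ and $\Psi_n\ge0$. So the middle term is at most $(\Div(\Div\A))_+\,\xi\le \|(\Div(\Div\A))_+\|_\infty\|\xi\|_\infty$. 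This yields $-P^*\xi_n\le D_\A$ a.e. The bound also shows that $P^*\xi_n\in L^\infty(\Omega)$ (the lower bound is similar, using $\Psi_n''\lesssim n$ and boundedness of $\nabla\xi$), so $\xi_n\in\mathcal A$.

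I expect the main technical point to be the regularity bookkeeping rather than the inequality. One must justify the second-order chain rule for $\Psi_n\circ\xi$ with $\xi\in W^{2,p}$ (fine since $\Psi_n\in C^2$ with bounded derivatives on the range of $\xi$), and check the a.e. validity of \eqref{eq:P*non-div-P} for $\xi_n$, which needs $\A\in C^2$. One also has to confirm that $\Psi_n''$ is bounded so that $P^*\xi_n\in L^\infty$; if the cube-root choice causes trouble at the kink, I would use any explicit $C^2$ convex smoothing of $(s-1/n)_+$ instead, chosen monotone in $n$.
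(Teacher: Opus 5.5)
Your proposal is correct and is essentially the paper's argument. Composing $\xi$ with a convex $C^2$ cutoff that vanishes near $0$, then applying the chain rule and $-P^*\xi=1$, gives the same identity as in the paper, $-P^*\xi_n=\Psi_n'(\xi)+(\Div(\Div\A))\bigl(\Psi_n'(\xi)\xi-\Psi_n(\xi)\bigr)-\Psi_n''(\xi)\|\nabla\xi\|_\A^2$, which both of you bound with the convexity estimate $0\le\Psi_n'(s)s-\Psi_n(s)\le s$. The differences are only cosmetic: you use the shifted family $\Phi_{1/n}(\cdot-1/n)$ and check its monotonicity in $n$ directly, and you compute via the nondivergence form \eqref{eq:P*non-div-P}, while the paper rescales one fixed convex $\Phi$ as $\frac1n\Phi(n\xi)$ and computes through the divergence structure of $P^*$.
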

\begin{proof}
    \smallskip
    \noindent
    Let us start by the construction of $\xi_n$.
    
     Consider a convex function $\Phi\in C^\infty (\mathbb{R})$ such that $\Phi$ is nondecreasing, $\Phi(x)= 0$ for $x<\delta$ for some $\delta>0$, $\Phi$ is Lipschitz with constant $1$ (in particular  $\Phi(t)\le t$ and $\Phi' \leq 1$),  
     and $\frac{\Phi(t)}{t}\nearrow 1$ as $t\to \infty$.

     As an example function $\Phi$ we can consider $\Phi (t):= \varphi_\epsilon * {\rm max}\{ 0, t-1\}$, where $\varphi_\epsilon$ is the standard mollifier.

    Define 
    $$
        \xi_n(x):= \frac{1}{n}\Phi (n\xi (x)) .
    $$
    By the regularity and decay of $\xi$ and the definition of $\Phi$ we have $\xi_n\in C^1_c(\Omega)$. 
    Furthermore, 
    $$
        \xi_n (x) = \frac{\Phi (n\xi (x))}{n\xi (x)}\xi (x) \le \xi(x).
    $$
    and $\Phi(n\xi)/n\xi$ converge to 1 as $n\to \infty$, and increase since $\Phi$ is convex. This implies that 
    $
        \xi_n(x)\nearrow \xi (x)
    $
    for every $x\in \Omega$.

    We will show that
     the following estimate holds almost everywhere:
    \begin{align*}
    - P^*\xi_n &\le \Phi^{'}(n\xi) + \Div(\Div\A)(\Phi^{'}(n\xi)\cdot\xi-\frac{1}{n}\Phi(n\xi)).
    \end{align*}
    
    Indeed, $\xi_n\in W^{2,1}_c(\Omega)$ and
    \begin{align}
        \nabla \xi_n   &=\Phi^{'}(n\xi )\nabla\xi  ,\nonumber\\
        \nabla^{(2)}\xi_n  &= \Phi^{'}(n\xi )\nabla^{(2)}\xi  + n\Phi^{''}(n\xi )\nabla\xi  \otimes \nabla\xi  ,\nonumber\\
        \Div(\A\nabla\xi_n) &= (\Div\A)\cdot \nabla\xi_n + \A\cdot \nabla^{(2)}\xi_n\label{1oszac}\\
        &= \Phi^{'}(n\xi )(\Div\A)\cdot \nabla\xi   + \Phi^{'}(n\xi ) \A\cdot\nabla^{(2)}\xi  \nonumber\\ 
        & \quad + n\Phi^{''}(n\xi ) \A \nabla\xi  \cdot \nabla\xi  \nonumber\\
        &\ge  \Phi^{'}(n\xi )\Div(\A \nabla\xi ) ,\nonumber
    \end{align}
    where we have used the convexity of $\Phi$ and \eqref{ellipticity}, which imply $\Phi^{''}(n\xi )(\A \nabla\xi \cdot\nabla \xi)\ge 0$.
    We also have
    \begin{align}\label{eq:divdivterm}
    \Div(\Div\A\xi_n) &= \Div(\Div\A)\xi_n + (\Div\A)\cdot\nabla \xi_n\\
    &= \Div(\Div\A)\frac{1}{n}\Phi(n\xi ) + 
     \Phi^{'}(n\xi )(\Div\A)\cdot \nabla\xi  \nonumber. 
    \end{align}
    Consequently,
    \begin{align*}
    -P^*\xi_n &= -\Div(\A\nabla\xi_n) - \Div(\Div\A\xi_n)\\
    &\! \!\stackrel{\eqref{1oszac} }{\le}  - \Phi^{'}(n\xi )[\Div(\A\nabla\xi )]  -  \Phi^{'}(n\xi ) \Div\A\cdot \nabla\xi   - \Div(\Div\A)\frac{1}{n}\Phi(n\xi )\\
    &=  -\Phi^{'}(n\xi )[ \Div(\A \nabla\xi ) + \Div(\Div\A \xi )]  \\ &\quad -  \Phi^{'}(n\xi )[\Div\A\cdot \nabla\xi  
    -\Div(\Div\A \xi )]\\
    &\quad - \Div(\Div\A)\frac{1}{n}\Phi(n\xi )\\
    &= \Phi^{'}(n\xi )[-P^*\xi] + \Phi^{'}(n\xi )
    [\Div(\Div\A ) \xi] -  \Div(\Div\A)\frac{1}{n}\Phi(n\xi )\\
    &=  \Phi^{'}(n\xi ) +   \Div(\Div\A )\left( \Phi^{'}(n\xi )\, \xi - \frac{1}{n}\Phi(n\xi )\right) \\
    &\le 1+ \| (\Div(\Div\A ))_+\|_{\infty}\|\xi\|_\infty.
    \end{align*}
    The last estimate holds because
    \begin{equation*}
        \frac{1}{n\xi}\Phi(n\xi ) = \frac{\Phi(n\xi ) - \Phi(0)}{n\xi} = \Phi'(\zeta)
    \end{equation*}
    for some $\zeta \in (0, n\xi)$ by the Lagrange Mean Value Theorem, whence the convexity of $\Phi$ implies 
    \begin{equation*}
        0 \leq \Phi^{'}(n\xi ) - \frac{1}{n \xi}\Phi(n\xi ) \leq \Phi^{'}(n\xi ) \leq 1.
    \end{equation*}
    It remains to show that $\xi_n\in \mathcal{A}$. By recalling \eqref{eq:P*non-div-P} and applying \eqref{1oszac} and \eqref{eq:divdivterm} we have
    \begin{equation*}
    \begin{split}
         P^\ast \xi_n &= 
         \Phi'(n\xi) P^\ast \xi + n\Phi^{''}(n\xi ) \A \nabla\xi  \cdot \nabla\xi + \Div (\Div \A) (\xi_n - \Phi'(n\xi) \xi)
    \end{split}
    \end{equation*}
    As $P^\ast \xi = 1 \in L^{\infty}(\Omega)$, $0\leq \Phi' \leq 1$, $\Phi \in C^{\infty}(\mathbf{R})$, $\xi \in C^1(\overline{\Omega})$, $\A \in C^2(\overline{\Omega})$ satisfies \eqref{ellipticity}, and $\xi_n \in C^1_c(\overline{\Omega})$, we obtain $P^\ast \xi_n \in L^{\infty}(\Omega)$, hence $\xi_n \in \mathcal{A}$, which ends the proof.
\end{proof}

Consider $$\mathcal{G} = \{u\in L^1(\Omega): u\geq 0,\  -Pu\in L^1_{loc}(\Omega),\ -Pu\geq 0\}.$$
The following lemma shows that $-P(\mathcal{G})\subseteq L^1(\Omega,\delta_\Omega)$.

\begin{lemma}[weighted integrability of elements $F\in P(\mathcal{G})$]\label{lem:FL1loc}
   Assume that $\Omega \in C^{2}$ and that $\A\in C^{2}(\overline{\Omega})$ satisfies \eqref{ellipticity}. If a nonnegative function $u\in L^1(\Omega)$ and a nonnegative measurable function $F\in L^1_{loc}(\Omega)$ satisfy $-Pu = F$ in $\mathcal{D}'(\Omega)$, i.e.
    \begin{align*}  
        -\int_{\Omega} uP^\ast \phi = \int_{\Omega} F\phi,\qquad \phi\in C^\infty_c(\Omega),
    \end{align*}
    then $F\in L^1(\Omega,\delta_\Omega)$.
\end{lemma}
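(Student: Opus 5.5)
The plan is to test the equation against the approximations $\xi_n$ of the special function $\xi$ from Lemma~\ref{lem:specialxi}, then pass to the limit with Fatou's lemma.

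First, I would apply Lemma~\ref{lem:firstextension} with the pair $(u,F)$. This lemma only needs $\A\in C^2(\Omega)$, $u\in L^1(\Omega)$ and $F\in L^1_{loc}(\Omega)$, all of which hold here. Each $\xi_n$ belongs to $\mathcal{A}$ by Lemma~\ref{lem:specialxi}, so it is an admissible test function. This gives
\begin{align*}
    \int_\Omega F\xi_n = -\int_\Omega u P^*\xi_n ,\qquad n\in\mathbb{N}.
\end{align*}

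Second, I would bound the right-hand side uniformly in $n$. Lemma~\ref{lem:specialxi} gives $-P^*\xi_n\le D_{\A}$ almost everywhere, and $u\ge 0$. Hence
\begin{align*}
    \int_\Omega F\xi_n \le D_{\A}\int_\Omega u = D_{\A}\|u\|_{L^1(\Omega)}<\infty .
\end{align*}
Only the one-sided bound on $-P^*\xi_n$ is needed, because $u$ is nonnegative. This is exactly why Lemma~\ref{lem:specialxi} was formulated with an upper bound only: $P^*\xi_n$ contains the term $n\Phi''(n\xi)\A\nabla\xi\cdot\nabla\xi$, which is not uniformly bounded in $n$.

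Third, I would pass to the limit. Since $F\ge0$ and $\xi_n\nearrow\xi$ pointwise, the Monotone Convergence Theorem (or Fatou's lemma) yields
\begin{align*}
    \int_\Omega F\xi\le D_{\A}\|u\|_{L^1(\Omega)} .
\end{align*}
Finally, Lemma~\ref{lem:linear}\ref{lem:linear_iic}, applied with the constant right-hand side $1>0$, gives $\xi\gtrsim\delta_\Omega$. Therefore $\int_\Omega F\delta_\Omega\lesssim\int_\Omega F\xi<\infty$, that is, $F\in L^1(\Omega,\delta_\Omega)$.

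The main obstacle, namely constructing compactly supported approximants with one-sided control of $-P^*\xi_n$, is already handled by Lemma~\ref{lem:specialxi}. One minor point remains to check: Lemma~\ref{lem:firstextension} is applicable because $F\xi_n$ is integrable ($\xi_n$ has compact support and $F\in L^1_{loc}$), and $uP^*\xi_n$ is integrable because $P^*\xi_n\in L^\infty(\Omega)$ for each fixed $n$.
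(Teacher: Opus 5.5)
Your proposal is correct and matches the paper's proof essentially step for step. Both test with the approximants $\xi_n$ from Lemma~\ref{lem:specialxi}, made admissible by Lemma~\ref{lem:firstextension}, use the one-sided bound $-P^*\xi_n\le D_{\A}$ together with $u\ge 0$, pass to the limit via Fatou (your monotone convergence works equally well), and conclude with $\xi\gtrsim\delta_\Omega$ from Lemma~\ref{lem:linear}\ref{lem:linear_iic}.
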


\begin{proof}
    Let $\xi_n \in \mathcal{A}$ be the sequence increasing to $\xi$ given in Lemma~\ref{lem:specialxi}. Lemma~\ref{lem:firstextension} implies that $\xi_n$ can be taken as test functions in the equation. Using the Fatou lemma, the equation for $u$, and the estimate on $-P^*\xi_n$ in Lemma~\ref{lem:specialxi} we get
    \begin{align*}
        \int_\Omega F\xi \leq \liminf\limits_{n\to \infty} \int_\Omega F\xi_n = \liminf\limits_{n\to \infty} -\int_\Omega u P^*\xi_n
        \leq D_{\A}\int_\Omega u 
        =D_{\A}\|u\|_{L^1(\Omega)} <\infty,
    \end{align*}
    which ends the proof, as $\xi \gtrsim \delta_\Omega$ by Lemma~\ref{lem:linear} part \ref{lem:linear_ii}\ref{lem:linear_iic}.
\end{proof}

\subsection{Very weak solutions}

Following \cite{MR2103694} we define
\begin{equation}\label{c1cl}
    C^{1,P}_0(\overline{\Omega}):= \{ \phi\in C^1(\overline{\Omega}): \phi\equiv 0 \ \textnormal{pointwise on}\ \partial\Omega \ {\rm and}\ P^*\phi \in L^\infty (\Omega)\}.   
\end{equation}
The expression $P^*\phi \in L^\infty(\Omega)$ should be understood in the sense of distributions, but under the assumptions of Lemma~\ref{lem:linear} it implies that $\phi \in W^{2,1}(\Omega) \cap C^1(\overline{\Omega}) $, so $P^*\phi$ can be computed almost everywhere, yielding a function in $L^\infty(\Omega)$. Furthermore, the same Lemma implies that $\phi \in C^{1,P}_0(\overline{\Omega})$ satisfies $\phi \lesssim \delta_{\Omega}$. In \cite[Definition~2.3]{MR2103694}, the space $C^{1,P}_0(\overline{\Omega})$ is used as the space of the test functions to define the so-called very weak solutions. 

\begin{definition}[very weak solutions]\label{very-weak}
    We say that $v$ is a very weak solution to 
    \begin{align*}
        \begin{cases}
            -Pu = F\quad &{\rm in}\ \Omega\\
            u\equiv 0\quad &{\rm on}\ \partial \Omega,
        \end{cases}
    \end{align*}
    if \begin{align}\label{eq:veryweak}
        -\int_\Omega u P^*\phi = \int_\Omega F \phi,\quad \phi \in C^{1,P}_0(\overline{\Omega}).
    \end{align}
\end{definition}

\begin{remark}
    The Dirichlet boundary condition is not explicitly specified in the definition, but it is not an omission. The zero boundary values are manifested by the fact that there is no boundary integral in \eqref{eq:veryweak} despite the fact that the test functions are not compactly supported. See \cite[Definition~2.3]{MR2103694} for an analog for nonzero boundary values. 
\end{remark}

The following lemma is one of the key arguments to deduce uniqueness. We believe that it is of independent interest, because of the very weak assumptions on the solution.

\begin{lemma}[distributional solution is a very weak solution]\label{lem:veryweak}
    Assume that $\Omega \in C^{2}$ and that $\A\in C^{2}(\overline{\Omega})$ satisfies \eqref{ellipticity}. Assume that nonnegative $v\in L^1(\Omega)$ satisfies $-Pv = F\in L^1(\Omega,\delta_\Omega)$ in the distributional sense:
    \begin{align}\label{eq:weaklinear}
        -\int_\Omega v P^* \phi = \int_\Omega F\phi,\quad \phi \in C^\infty_c(\Omega),
    \end{align}
    and $v$ satisfies the zero trace condition \eqref{eq:Dirichletcond}. Then $v$ is also a very weak solution to $-Pu = F$.
\end{lemma}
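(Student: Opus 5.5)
Fix $\phi\in C^{1,P}_0(\overline{\Omega})$. By Lemma~\ref{lem:linear}, $\phi\in C^1(\overline{\Omega})\cap W^{2,p}(\Omega)$ for all $p<\infty$, and $|\phi|\lesssim\delta_\Omega$. The plan is to approximate $\phi$ by compactly supported test functions from the class $\mathcal{A}$, for which the identity already holds by Lemma~\ref{lem:firstextension}, and then pass to the limit.

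\textbf{Cut-off.} Take $\eta_\epsilon(x)=\eta(\delta_\Omega(x)/\epsilon)$, where $\eta\in C^\infty(\R)$, $\eta=0$ on $(-\infty,1]$, $\eta=1$ on $[2,\infty)$. Since $\Omega\in C^2$, $\delta_\Omega$ is $C^2$ on $\Omega_{2\epsilon_0}$, so $\eta_\epsilon\in C^2$ with $|\nabla\eta_\epsilon|\lesssim\epsilon^{-1}$, $|\nabla^{(2)}\eta_\epsilon|\lesssim\epsilon^{-2}$, both supported in $S_\epsilon:=\Omega_{2\epsilon}\setminus\Omega_\epsilon$. Then $\phi_\epsilon:=\eta_\epsilon\phi\in C^1_c(\Omega)\cap W^{2,1}(\Omega)$, and by the product rule
\begin{align*}
P^*\phi_\epsilon=\eta_\epsilon P^*\phi+2\A\nabla\eta_\epsilon\cdot\nabla\phi+\phi\,P\eta_\epsilon+\phi\,\Div\A\cdot\nabla\eta_\epsilon ,
\end{align*}
which is bounded for fixed $\epsilon$. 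Hence $\phi_\epsilon\in\mathcal{A}$, and Lemma~\ref{lem:firstextension} gives $-\int_\Omega vP^*\phi_\epsilon=\int_\Omega F\phi_\epsilon$.

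\textbf{Passage to the limit.} On the right-hand side, $|F\phi_\epsilon|\lesssim F\delta_\Omega\in L^1$, so dominated convergence gives $\int F\phi_\epsilon\to\int F\phi$. On the left-hand side, $\int v\eta_\epsilon P^*\phi\to\int vP^*\phi$, since $v\in L^1$ and $P^*\phi\in L^\infty$. The remaining terms are supported in $S_\epsilon$. Using $|\phi|\lesssim\epsilon$ and $|\nabla\phi|\lesssim1$ there, they are bounded by
\begin{align*}
C\int_{S_\epsilon}|v|\left(\epsilon^{-1}+\epsilon\cdot\epsilon^{-2}+\epsilon\cdot\epsilon^{-1}\right)\lesssim\frac1\epsilon\int_{\Omega_{2\epsilon}}|v|\to0
\end{align*}
by the trace condition \eqref{eq:Dirichletcond}. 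This yields \eqref{eq:veryweak}.

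\textbf{Main obstacle.} The delicate point is the term $2\A\nabla\eta_\epsilon\cdot\nabla\phi$. It is of size $\epsilon^{-1}$ with no compensating smallness, because $\nabla\phi$ need not vanish on $\partial\Omega$. It is nevertheless exactly controlled by \eqref{eq:Dirichletcond}, which is the reason that condition is phrased with the factor $1/\epsilon$. One must also make sure that the regularity of $\delta_\Omega$ ($C^2$ near the boundary for $C^2$ domains) gives the bound $|\nabla^{(2)}\eta_\epsilon|\lesssim\epsilon^{-2}$, which in turn justifies $\phi_\epsilon\in W^{2,1}$ and $P^*\phi_\epsilon\in L^\infty$. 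Nonnegativity of $v$ is not essential here, beyond allowing $|v|=v$.
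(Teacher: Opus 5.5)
Your proposal is correct and follows essentially the same route as the paper: cut off $\phi$ near $\partial\Omega$ with $\eta_\epsilon$, use Lemma~\ref{lem:firstextension} for $\eta_\epsilon\phi\in\mathcal{A}$, pass to the limit on the right-hand side via $|\phi|\lesssim\delta_\Omega$ and $F\in L^1(\Omega,\delta_\Omega)$, and make the commutator terms vanish with \eqref{eq:Dirichletcond}; your explicit choice $\eta_\epsilon=\eta(\delta_\Omega/\epsilon)$ simply constructs the cut-off that the paper takes as given. One harmless slip is that the last term of your product formula should be $2\phi\,\Div\A\cdot\nabla\eta_\epsilon$: one copy comes from $\Div(\A\nabla\eta_\epsilon)=P\eta_\epsilon+\Div\A\cdot\nabla\eta_\epsilon$ and one from $\Div(\Div\A\,\eta_\epsilon\phi)$, and this does not affect your estimate.
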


\begin{proof}[Proof of Lemma~\ref{lem:veryweak}]
   Let us fix for every small $\epsilon>0$ a function $\eta_\epsilon\in C_c^\infty(\Omega)$ such that $0\leq\eta_\epsilon\leq 1$, $\eta_\epsilon(x) = 1$ for $\delta_\Omega(x)>2\epsilon$, $\eta_\epsilon(x)=0$ for $\delta_\Omega(x)<\epsilon$, and $\|\nabla \eta_\epsilon\|_\infty \leq C\epsilon^{-1}$ and $\|\nabla^{(2)}\eta_\epsilon\|_\infty \leq C\epsilon^{-2}$ for some fixed  $C>0$ independent of $\epsilon$.

 First, recall that by Lemma~\ref{lem:firstextension} the class of the functions can be extended to 
   \begin{align*}
       \mathcal{A} = \{\phi\in C^1_c(\Omega)\cap W^{2,1}(\Omega): P^*\phi\in L^\infty(\Omega)\}.
   \end{align*} Then, for any $\phi\in C^{1,P}_0(\overline{\Omega})$ and $\epsilon>0$, we have
    \begin{equation} \label{eq:P*cutoff}
    \begin{split}
        P^*(\eta_\epsilon \phi) &= \eta_\epsilon P^* \phi + 2\A \nabla \phi \cdot \nabla \eta_\epsilon + \phi(\Div (\A \nabla \eta_\epsilon) + \Div \A \cdot \nabla \eta_\epsilon) \\
        &= \eta_\epsilon P^* \phi + 2\A \nabla \phi \cdot \nabla \eta_\epsilon + \phi (\A \cdot \nabla^{(2)} \eta_\epsilon + 2 \Div \A \cdot \nabla \eta_\epsilon) 
    \end{split}
    \end{equation} 
    Hence, $\phi\eta_\epsilon\in\mathcal{A}$ and so 
    \begin{align*}
        -\int_\Omega v P^*(\eta_\epsilon \phi) = \int_\Omega F\eta_\epsilon \phi,\quad \phi  \in C_0^{1,P}(\Omega).
    \end{align*}
    Since $\phi(x)\lesssim \delta_\Omega(x)$ (see  Lemma~\ref{lem:linear} part \ref{lem:linear_ii} \ref{lem:linear_iib})  and $F \in L^1(\Omega,\delta_\Omega)$, the right-hand side converges by the Dominated Convergence Theorem to $\int_\Omega F\phi$ as $\epsilon\to 0$.
    
    It remains to show that the left-hand side converges to $-\int_\Omega v P^* \phi$, for which we use \eqref{eq:P*cutoff}. By the Dominated Convergence Theorem and the boundedness of $P^*\phi$ we have 
    \begin{align*}
        \lim\limits_{\epsilon\to 0}\int_\Omega v \eta_\epsilon P^* \phi = \int_\Omega v P^* \phi.
    \end{align*}
    Furthermore, $\phi\in  C^1(\overline{\Omega})$ and $\phi\lesssim \delta_\Omega$, therefore by the properties of $\eta_\epsilon$, 
    \begin{align*}
        &\int_\Omega v |2\A  \nabla \phi \cdot \nabla \eta_\epsilon + \phi(\Div (\A \nabla \eta_\epsilon) + \Div \A \cdot \nabla \eta_\epsilon)|
        \lesssim\frac 1 {2\epsilon} \int_{\{y \in \Omega: \delta_\Omega(y)<2\epsilon\}} v,
    \end{align*}
    which converges to 0 by the Dirichlet condition \eqref{eq:Dirichletcond} for $v$. This ends the proof.
\end{proof}\medskip

\subsection{The Kato-type inequality for very weak solutions} 

The lemma below is an a~priori bound for solutions to linear equations with the right-hand side in the weighted $L^1$ space. We refer to e.g.~\cite{MR2530605} for more results and context on equations of this type.

\begin{lemma}[the estimates in the $L^1$ framework]\label{lem:aprioriL1}
    Assume that $\Omega \in C^{2}$ and that $\A\in C^{2}(\overline{\Omega})$ satisfies \eqref{ellipticity}. Suppose that $F\in L^1(\Omega,\delta_\Omega)$ and $u\in L^1(\Omega)$ is a very weak solution to $-Pu = F$. Then, there exists $C=C(d,\Omega,c_\A,\|\A\|_{C^2(\overline{\Omega})})$ such that
    \begin{align*}
        \|u\|_{L^1(\Omega)} \leq C\|F\|_{L^1(\Omega,\delta_\Omega)}.
    \end{align*}
\end{lemma}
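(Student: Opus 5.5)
The natural route is duality: test the very weak formulation \eqref{eq:veryweak} with the solution of an adjoint problem whose right-hand side is $\sgn u$. Fix $g:=\sgn u\in L^\infty(\Omega)$, with $|g|\le 1$. By Lemma~\ref{lem:linear}\ref{lem:linear_i},\ref{lem:linear_ii}, there is a unique $\psi\in C^1(\overline{\Omega})\cap W^{2,p}(\Omega)$ for all $p<\infty$ with
\[
-P^*\psi = g \quad \text{a.e. in } \Omega, \qquad \psi\equiv 0 \text{ on } \partial\Omega.
\]
In particular $P^*\psi\in L^\infty(\Omega)$, so $\psi\in C^{1,P}_0(\overline{\Omega})$ and is an admissible test function in \eqref{eq:veryweak}. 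This gives
\[
\|u\|_{L^1(\Omega)}=\int_\Omega u\, g = -\int_\Omega u P^*\psi = \int_\Omega F\psi .
\]

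It then remains to bound $|\psi|\le C\delta_\Omega$ with $C$ independent of $g$. This is exactly Lemma~\ref{lem:linear}\ref{lem:linear_iib}: its comparability constant depends only on $d,c_\A,C_\A,\Omega,\|\A\|_{C^1}$ and $\|F\|_\infty\le 1$. To make the dependence explicit, I would redo its proof. The Green representation gives $\psi(x)=\int_\Omega G_\Omega(y,x)g(y)\,dy$, and the two-sided estimates \eqref{eq:Greenest}--\eqref{eq:Greenest2} give $G_\Omega(y,x)\approx G_\Omega(x,y)$. Hence
\[
|\psi(x)|\lesssim \int_\Omega G_\Omega(x,y)\,dy=\zeta(x),
\]
where $\zeta$ solves $-P\zeta=1$. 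Lemma~\ref{lemm:KPR_assumption_verification}\ref{lemm:KPR_assumption_verification_iv} then gives $\zeta\lesssim\delta_\Omega$, and $\zeta$ depends only on $\A$ and $\Omega$. Combining the two displays yields $\|u\|_{L^1}\le \int_\Omega |F|\,|\psi|\le C\int_\Omega |F|\delta_\Omega$.

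I expect the main obstacle to be the uniformity of the constant in $g$, that is, making sure the Green function bound really controls $\psi$ for arbitrary bounded measurable $g$ and not just smooth data. The representation of $W^{2,p}$ solutions of the adjoint problem through $G^*_\Omega(x,y)=G_\Omega(y,x)$ was already invoked in Lemma~\ref{lem:linear}, so I would lean on that. A secondary point is that the identity $\int u\,g=\|u\|_1$ uses only $u\in L^1$, and that the right-hand side is finite because $F\in L^1(\Omega,\delta_\Omega)$. If one prefers to avoid the sign function, the same argument can be run with $\psi_\pm$ solving $-P^*\psi_\pm=\mathbf 1_{\{\pm u>0\}}$ and the results added.
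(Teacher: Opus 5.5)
Your proposal is correct and follows the paper's proof essentially verbatim. Like the paper, you test the very weak formulation with the solution of $-P^*\psi=\sgn u$ given by Lemma~\ref{lem:linear}, and you bound $|\psi|\lesssim\delta_\Omega$ uniformly in $u$ via Lemma~\ref{lem:linear}~\ref{lem:linear_ii}~\ref{lem:linear_iib}, i.e.\ the Green function comparison with $\zeta$ solving $-P\zeta=1$.
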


\begin{proof}
      By Lemma~\ref{lem:linear} there exists a unique weak solution $\phi_u\in H^1_0(\Omega)$ to 
    \begin{align*}
        \begin{cases}
            -P^* \phi_u = \sgn u\quad &{\rm in}\ \Omega,\\
            \phi_u \equiv 0\quad &{\rm on}\ \partial \Omega.
        \end{cases}
    \end{align*}
    We also have $\phi_u\in W^{2,p}(\Omega)\cap C^1(\overline{\Omega})$ for all $p\in (1,\infty)$, $-P^\ast \phi_u = \sgn u$ a.e. in $\Omega$ and $\phi_u\equiv 0$ pointwise on $\partial \Omega$.  Lemma~\ref{lem:linear} part \ref{lem:linear_ii} \ref{lem:linear_iib} and the fact that $\|\sgn u\|_\infty \leq 1$ together imply that $\|\phi_u/\delta_\Omega\|_\infty$ is bounded independently of $u$. We also have $\phi_u\in C^{1,P}_0(\overline{\Omega})$, so we can use it as the test function in \eqref{eq:veryweak}, getting
    \begin{align*}
        \int_{\Omega} |u| = -\int_\Omega u P^* \phi_u =\int_\Omega \phi_u F \leq C\int_\Omega |F| \delta_\Omega,
    \end{align*}
    with $C$ independent of $u$, which ends the proof.
\end{proof}

The proof of the next lemma is similar to that given by Oliva and Petitta \cite[Lemma~2.9]{OLIVA} for operators of the form $\Div (\A \nabla u)$.

\begin{lemma}[The Kato-type inequality for very weak solutions]\label{lem:Kato}
    Assume that $\Omega \in C^{2}$ and that $\A\in C^{2}(\overline{\Omega})$ satisfies \eqref{ellipticity}. Assume that $F\in L^1(\Omega,\delta_\Omega)$ and let $u\in L^1(\Omega)$ be a very weak solution to $-Pu = F$. Then,
    \begin{align*}
        -\int_\Omega u_+ P^*\phi \leq \int_{\Omega\cap\{u\geq 0\}} \phi F
    \end{align*}
    holds for all $\phi\in C^{1,P}_0(\overline{\Omega})$ such that $\phi\geq 0$.
\end{lemma}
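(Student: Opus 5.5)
The plan is to follow the approximation scheme of Oliva--Petitta \cite[Lemma~2.9]{OLIVA}: regularize the data, apply the strong-solution Kato inequality (Lemma~\ref{lem:Katoidentity}, or rather its convexity mechanism) to the regularized solutions, and pass to the limit using the $L^1$ a~priori estimate of Lemma~\ref{lem:aprioriL1}.

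\textbf{Step 1 (approximation).} Take $F_k\in L^\infty(\Omega)$ with $F_k\to F$ in $L^1(\Omega,\delta_\Omega)$. For instance, $F_k := \max(\min(F,k),-k)\mathbf{1}_{\{\delta_\Omega>1/k\}}$, which converges by dominated convergence. Let $u_k$ be the strong solution of $-Pu_k=F_k$ with $u_k=0$ on $\partial\Omega$. This exists by \cite[Theorem~9.15]{MR1814364} as in Lemma~\ref{lemm:KPR_assumption_verification}; sign-changing $F_k$ is harmless there. We have $u_k\in W^{2,p}(\Omega)\cap W^{1,p}_0(\Omega)\cap C^1(\overline\Omega)$ for all $p$. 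Integrating by parts twice against $\phi\in C^{1,P}_0(\overline\Omega)$ shows that $u_k$ is a very weak solution. This is legitimate since $\phi\in W^{2,p}$ by Lemma~\ref{lem:linear}, and the boundary terms vanish because $u_k=\phi=0$ on $\partial\Omega$. The only term needing care is $\int_{\partial\Omega}(\A\nabla\phi\cdot\nu)u_k$, which is zero. By linearity $u-u_k$ is a very weak solution with datum $F-F_k$, so Lemma~\ref{lem:aprioriL1} gives $u_k\to u$ in $L^1(\Omega)$. Hence $(u_k)_+\to u_+$ in $L^1(\Omega)$.

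\textbf{Step 2 (Kato for $u_k$ against $\phi\in C^{1,P}_0(\overline\Omega)$, $\phi\ge0$).} Use the convex $C^2$ approximations $\Phi_\epsilon$ from Lemma~\ref{lem:Katoidentity}. A.e. we have $-P\Phi_\epsilon(u_k)\le\Phi_\epsilon'(u_k)F_k$, and $\Phi_\epsilon(u_k)\in W^{2,p}$ vanishes on $\partial\Omega$. Integrating against $\phi$ and moving $P$ onto $\phi$ by the same double integration by parts (boundary terms vanish because $\Phi_\epsilon(u_k)=\phi=0$ on $\partial\Omega$) gives
\[
-\int_\Omega \Phi_\epsilon(u_k)P^*\phi\le\int_\Omega\Phi'_\epsilon(u_k)F_k\phi .
\]
Let $\epsilon\to0$ by dominated convergence, using $P^*\phi\in L^\infty$ and $F_k\phi\in L^1$. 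Since $\Phi_\epsilon'(u_k)\to\mathbf{1}_{\{u_k>0\}}$, this yields
\[
-\int_\Omega (u_k)_+P^*\phi\le\int_{\{u_k>0\}}F_k\phi .
\]

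\textbf{Step 3 (limit $k\to\infty$).} The left side converges by Step 1, since $P^*\phi$ is bounded. On the right, write $\int_{\{u_k>0\}}F_k\phi=\int_{\{u_k>0\}}F\phi+o(1)$. The error term is $o(1)$ because $|\phi|\lesssim\delta_\Omega$ (Lemma~\ref{lem:linear}\ref{lem:linear_ii}\ref{lem:linear_iib}) and $F_k\to F$ in $L^1(\Omega,\delta_\Omega)$.

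This step is the main obstacle: the indicators $\mathbf{1}_{\{u_k>0\}}$ need not converge to $\mathbf{1}_{\{u\ge0\}}$. I would pass to a subsequence with $u_k\to u$ a.e. Then $\limsup_k\mathbf{1}_{\{u_k>0\}}\le\mathbf{1}_{\{u\ge0\}}$ pointwise a.e. This handles the positive part $F_+\phi$ through reverse Fatou, with dominating function $F_+\phi\in L^1$. For the negative part we need $\liminf\int_{\{u_k>0\}}F_-\phi\ge\int_{\{u\ge0\}}F_-\phi$. On $\{u>0\}$ the indicators converge to $1$, so Fatou suffices there. The remaining set $\{u=0\}$ is the delicate part.

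If necessary, I would avoid the issue with a vanishing shift. Apply Step 2 to $(u_k+s)_+$ with a constant $s>0$, noting $-P(u_k+s)=F_k$; the boundary terms then need checking, and this is the risk in this route. Alternatively, redo Step 2 with $\Phi_\epsilon$ shifted so that the indicator becomes $\mathbf{1}_{\{u_k\ge -s\}}$, and send $k\to\infty$ first. Then $\limsup$ and $\liminf$ of $\mathbf{1}_{\{u_k\ge-s\}}$ are both sandwiched between $\mathbf{1}_{\{u>-s\}}$ and $\mathbf{1}_{\{u\ge -s\}}$. Choosing $s$ with $|\{u=-s\}|=0$ gives convergence of the right-hand side. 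Finally let $s\to0$, where $\mathbf{1}_{\{u\ge-s\}}\to\mathbf{1}_{\{u\ge0\}}$ pointwise, and use dominated convergence on both sides.
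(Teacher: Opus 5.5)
Your argument is correct. Its skeleton is the paper's: approximate $F$ in $L^1(\Omega,\delta_\Omega)$, solve strongly, get $u_k\to u$ in $L^1$ from Lemma~\ref{lem:aprioriL1} applied to $u-u_k$, and run the convexity argument on $u_k$. The difference is in how the indicator $\mathbf{1}_{\{u\ge0\}}$ is obtained.

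You are right that your first version of Step~3 cannot close. After $\epsilon\to0$ at fixed $k$ only $\mathbf{1}_{\{u_k>0\}}$ remains, and nothing controls $\int_{\{u=0\}}F_-\phi$. The level shift repairs this. Its cleanest form is $\Psi_{\epsilon,s}(x):=\Phi_\epsilon(x+s)-\Phi_\epsilon(s)$, which is convex, $C^2$, $1$-Lipschitz and vanishes at $0$. So Step~2 applies verbatim and yields $-\int_\Omega((u_k+s)_+-s)P^*\phi\le\int_{\{u_k>-s\}}F_k\phi$. Then let $k\to\infty$ for the (all but countably many) $s$ with $|\{u=-s\}|=0$, and finally $s\downarrow0$.

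The boundary term you worried about in the unshifted variant is harmless anyway. It equals $-\Phi_\epsilon(s)\int_{\partial\Omega}\A\nabla\phi\cdot\nu=-\Phi_\epsilon(s)\int_\Omega P^*\phi=O(s)$, which is exactly what subtracting the constant removes.

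The paper instead builds the shift into $\Phi_\epsilon$ and reverses the order of limits. It takes a convex $\Phi_\epsilon\in C^2$ with $\Phi_\epsilon'\equiv1$ on $[0,\infty)$ and $-\epsilon\le\Phi_\epsilon\le0$ on $(-\infty,0)$, so $\Phi_\epsilon'$ makes its transition to the left of $0$. It then lets $n\to\infty$ first with $\epsilon$ fixed. Since $\Phi_\epsilon'$ is continuous, $\Phi_\epsilon'(u_n)\to\Phi_\epsilon'(u)$ a.e., and dominated convergence with majorant $|F|\delta_\Omega$ handles the right-hand side. Only then does it send $\epsilon\to0$, where $\Phi_\epsilon'(u)\to\mathbf{1}_{\{u\ge0\}}$ pointwise.

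The comparison is as follows:
\begin{itemize}
\item The paper needs two limits instead of three and no selection of non-atomic levels.
\item Your version reuses the $\Phi_\epsilon$ of Lemma~\ref{lem:Katoidentity} unchanged and passes through a Kato inequality for each $u_k$.
\item Your explicit check in Step~1 that $u_k$ is itself a very weak solution is needed to apply Lemma~\ref{lem:aprioriL1} to $u-u_k$; the paper uses this tacitly.
\end{itemize}
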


\begin{proof}
    Let $F_n\in  C^1_c(\Omega)$ approximate $F$ in the $\|\cdot\|_{L^1(\Omega,\delta_\Omega)}$ norm, see e.g.~\cite[Proposition~7.9]{MR1681462}. Then, by Lemma~\ref{lemm:KPR_assumption_verification}, 
    \begin{align*}
        \begin{cases}
            -Pu_n = F_n\quad &{\rm in}\ \Omega,\\
            u_n \equiv 0\quad &{\rm on}\ \partial \Omega,
        \end{cases}
    \end{align*}
    has a unique strong solution $u_n$ such that $u_n\in W^{2,p}(\Omega)\cap W^{1,p}_0(\Omega)$ for all $p\in (1,\infty)$. By the interior regularity theory, see e.g.~\cite[Theorem~9.19]{MR1814364}, and the Sobolev embedding we also have $u_n\in C^2(\Omega)$. Furthermore, by Lemma~\ref{lem:aprioriL1}   we have $\|u_n - u\|_{L^1(\Omega)} \to 0$ as $n\to \infty$. By taking a subsequence we can assume that $u_n$ also converge almost everywhere to $u$. 

    Consider convex functions $\Phi_\epsilon \in C^2(\R)$ that satisfy $\Phi_\epsilon(0) = 0$, $\Phi_\epsilon'(x) = 1$ for $x\geq 0$, $\Phi_\epsilon(x) \geq -\epsilon$ for $x<0$, $0\leq \Phi_\epsilon'\leq 1$, $\Phi_\epsilon'' \leq C\epsilon^{-1}$. Note that $\Phi_\epsilon(u_n)\in W^{2,p}(\Omega)\cap W^{1,p}_0(\Omega)$ for all $p\in (1,\infty)$ and
    \begin{align*}
        P ( \Phi_\epsilon(u_n) ) = \Phi_\epsilon'(u_n) Pu_n + \Phi_\epsilon''(u_n) \A \nabla u_n  \cdot \nabla u_n.
    \end{align*}
    For $\phi \in C^{1,P}_0(\overline{\Omega})$ we have (by Lemma~\ref{lem:linear}) $\phi \in W^{2,p}(\Omega)\cap W^{1,p}_0(\Omega)$ for all $p\in (1,\infty)$. Thus, if $\phi\geq 0$, then using integration by parts, \eqref{ellipticity}, and the convexity of $\Phi$,
    \begin{equation} \label{eq:Kato_approx}
    \begin{split}
        -\int_\Omega \Phi_\epsilon(u_n) P^\ast \phi = -\int_\Omega P \Phi_\epsilon(u_n) \phi &= -\int_\Omega \Phi_\epsilon'(u_n) Pu_n\phi - \int_\Omega \Phi_\epsilon''(u_n) (\A \nabla u_n  \cdot \nabla u_n) \phi\\
        &\leq -\int_\Omega \Phi_\epsilon'(u_n) Pu_n\phi\\
        &= \int_\Omega  \Phi_\epsilon'(u_n) F_n\phi,
    \end{split}
    \end{equation}
    where the last equality holds thanks to $u_n$ being strong solutions. We now examine the convergence of the leftmost and the rightmost expressions in \eqref{eq:Kato_approx}. Since $\Phi_\epsilon'$ is bounded, we have $|\Phi_\epsilon(u_n) - \Phi_\epsilon(u)| \leq C|u_n-u|$, therefore since $\|u_n-u\|_{L^1(\Omega)}\to 0$ and $P^*\phi\in L^\infty(\Omega)$,
    \begin{align*}
        \|\Phi_\epsilon(u_n) P^\ast \phi - \Phi_\epsilon(u) P^\ast \phi\|_{L^1(\Omega)} \leq \|u_n-u\|_{L^1(\Omega)}\|P^* \phi\|_{\infty}\mathop{\longrightarrow}\limits_{n\to \infty} 0.
    \end{align*}
    For the rightmost expression we use the fact that every $\phi \in C^{1,P}_0(\overline{\Omega})$ satisfies $\phi\lesssim \delta_\Omega$, which follows from Lemma~\ref{lem:linear} \ref{lem:linear_iib}:  
    \begin{align*}
        \bigg|\int_\Omega  \Phi_\epsilon'(u_n) F_n\phi - \int_\Omega  \Phi_\epsilon'(u) F\phi\bigg| &\lesssim  \|F_n - F\|_{L^1(\Omega,\delta_\Omega)} +\int_\Omega |\Phi_\epsilon'(u_n)  -\Phi_\epsilon'(u)| |F|\delta_\Omega .
    \end{align*}
    Both expressions tend to 0: the first one because of the assumption on $F_n$ and the second one due to the Dominated Convergence Theorem, because $u_n$ converge to $u$ a.e. and the integrand is majorized by $2|F|\delta_\Omega\in L^1(\Omega)$. 

    Therefore we obtain
    \begin{align*}
        -\int_\Omega \Phi_\epsilon(u) P^\ast \phi \leq \int_\Omega \Phi_\epsilon'(u) F\phi.
    \end{align*}
    We have $|\Phi_\epsilon(u)|\leq \epsilon + u_+  \in L^1(\Omega)$ and $\Phi_\epsilon(u) \to u_+$ as $\epsilon\to 0$. At the same time, $|\Phi_\epsilon'(u)|\leq 1$ and, due to its monotonicity, $\Phi_\epsilon'(u) \to\textbf{1}_{u\geq 0}$ as $\epsilon\to 0$. Finally, $P^\ast \phi \in L^{\infty}(\Omega)$ and $|F\phi|\lesssim |F|\delta_\Omega\in L^1(\Omega)$. Hence, we obtain the statement of the lemma by the Dominated Convergence Theorem. 
\end{proof}

\subsection{Proof of the uniqueness theorem}
We are now ready to prove our main uniqueness theorem.

\begin{proof}[Proof of Theorem~\ref{uniqregu}]
    Let $u,v\in L^1(\Omega)$ be distributional solutions to \eqref{eq:distributional}. By Lemma~\ref{lem:FL1loc} we find that $F_u := f/u^\gamma$, $F_v:= f/v^\gamma$ belong to $L^1(\Omega,\delta_\Omega)$. Furthermore, by Lemma~\ref{lem:veryweak}, $u$ is a very weak solution to $-Pu = F_u$ and $v$ is a very weak solution to $-Pv = F_v$. By subtracting the very weak formulations, we obtain
    \begin{align*}
        -\int_\Omega (u-v)P^* \phi = \int_\Omega \phi(F_u - F_v),\quad \phi\in C^{1,P}_0(\Omega).
    \end{align*}
    If we select a test function satisfying $\phi\geq 0$, then by Lemma~\ref{lem:Kato} we get
    \begin{align*}
        -\int_\Omega (u-v)_+P^* \phi \leq \int_{\Omega\cap\{u\geq v\}} \phi(F_u - F_v)\leq 0,
    \end{align*}
    with the last inequality following from the fact that $F_u\leq F_v$ on $\{u\geq v\}$. In particular, by taking $\phi = \xi$  defined in Section~\ref{sec:weightedintegrability}, we get $(u-v)_+ \equiv 0$. By switching signs we also get $(v-u)_+ \equiv 0$, which means that $u=v$ almost everywhere.
\end{proof}

\section{Final discussion}\label{sec:discussion}

With additional information about $f$ we can deduce better regularity of $u$.

\begin{remark}[better regularity]\label{remarkRegularity}\
    When $f$ has better integrability properties, we obtain better regularity. For example, if 
    \begin{equation} \label{eq:better_f}
        \frac{f}{\delta_\Omega(x)^{\gamma}}\in L^p(\Omega)
    \end{equation}
    for some $1<p<\infty$, we obtain that $u\in W^{2,p}(\Omega)$. This follows from the elliptic regularity theory \cite[Theorem~9.15]{MR1814364} and the fact that $u(x)\gtrsim \delta_\Omega(x)$ for all $x\in \Omega$.
    
    The condition \eqref{eq:better_f} holds for example if we assume that $f \in W^{\lceil \gamma \rceil, p}_0(\Omega)$, where $\lceil \gamma \rceil$ is the least integer larger than or equal to $\gamma$, as follows from the famous characterization of this space via the distance function: it holds for $f \in W^{m, p}(\Omega)$ that
    \begin{equation*}
        f \in W^{m,p}_0(\Omega) \iff \frac{f}{\delta_\Omega(x)^{m}}\in L^p(\Omega),
    \end{equation*}    
    see e.g.~\cite[Theorem~1]{KadlecKufner66} for this result in Lipschitz domains and e.g.~\cite{BalinskyEvans15}, \cite[Theorem V.3.4 and Remark~X.6.8]{EdmundsEvans18}, \cite{KinnunenMartio97}, and \cite{NekvindaTurcinova24} for further extensions and additional context (with the Introduction of \cite{NekvindaTurcinova24} providing a nice modern overview). More precise conditions could also be expressed in terms of the fractional Sobolev spaces, see e.g.~\cite[Theorem~3]{MR4454384} or \cite{Kijaczko}.
\end{remark}

\begin{oq}

Let us finally state some open problems that we believe to be of interest.

\label{general}\rm
\begin{o-problem}\rm
In this paper we have examined the solutions to the PDE of the form
\begin{equation}\label{eq-general}
 -Pu = f(x)\tau (u),
\end{equation}
 where $u$ is positive, $f\in L^1(\Omega)$ is positive and 
$\tau (u)= u^{-\gamma}$ and the operator $P$ is second-order in nondivergence form.
One could ask about existence and uniqueness of solutions to \eqref{eq-general} with the possibly general nonlinearity 
$\tau (\cdot)$ and other operators $P$, for example ones with less regular matrices $\A$, or nonlocal operators with variable coefficients (see \cite{MR3356049} for the case of the fractional Laplacian).
\end{o-problem}

\begin{o-problem}\rm
    In Theorem \ref{maintheorem} \ref{item:gammain12} it was shown that the solution $u$ belongs to $H^1_{loc}(\Omega) \cap W_{0}^{1,q}(\Omega)$ with any $1\le q < \frac{2}{\gamma}$, for $\gamma \in (1,2)$. It would be interesting to know if $u$ belongs to $W_{0}^{1,1}(\Omega)$, when $\gamma =2$ and if one could conclude some more precise regularity when $\gamma >2$. 
\end{o-problem}
\end{oq}

\section*{Acknowledgments}
We thank Tom\'a\v{s} Roskovec for helpful discussions and comments on the manuscript. A.K. and A.R. were supported by the National Science Center (Poland) grant 2023/51/B/ST1/02209.

\bibliographystyle{abbrv}
\bibliography{bib}

\end{document}